\DeclarePairedDelimiter\floor{\lfloor}{\rfloor}
\newcommand{\R}{\mathbb R}
\newcommand{\N}{\mathbb N}
\newcommand{\F}{\mathbb F}
\newcommand{\B}{\mathcal B}
\newcommand{\be}{{\mathbf e}}
\newcommand {\X} {{\mathbb X}}
\newcommand {\Y} {{\mathbb Y}}
\newcommand {\e} {{\varepsilon}}
\newcommand{\bfe}{{\boldsymbol\e}}
\renewcommand{\phi}{{\varphi}}
\def\supp{\mathop{\rm supp}}
\numberwithin{equation}{section}
\newtheorem*{theorem*}{Theorem}
\newtheorem{theorem}{Theorem}[section]
\newtheorem{lemma}[theorem]{Lemma}
\newtheorem{defi}[theorem]{Definition}
\newtheorem{corollary}[theorem]{Corollary}
\newtheorem{Remark}[theorem]{Remark}
\newtheorem{remark}[theorem]{Remark}
\newtheorem{proposition}[theorem]{Proposition}
\newtheorem{definition}[theorem]{Definition}
\newtheorem{example}[theorem]{Example}
\theoremstyle{definition}
\newcommand{\Ba}[1]{\begin{array}{#1}}
\newcommand{\Ea}{\end{array}}
\newcommand{\Be}{\begin{equation}}
\newcommand{\Ee}{\end{equation}}
\newcommand{\Bea}{\begin{eqnarray}}
\newcommand{\Eea}{\end{eqnarray}}
\newcommand{\Beas}{\begin{eqnarray*}}
\newcommand{\Eeas}{\end{eqnarray*}}
\newcommand{\Benu}{\begin{enumerate}}
\newcommand{\Eenu}{\end{enumerate}}
\newcommand{\Bi}{\begin{itemize}}
\newcommand{\Ei}{\end{itemize}}
\newcommand{\BR}{\begin{Remark} \em}
\newcommand{\ER}{\end{Remark}}
\newcommand{\BE}{\begin{example} \em}
\newcommand{\EE}{\end{example}}
\newcounter{reg}
\newcounter{regTO}
\newcommand{\bff}{\mathbf 1}
\renewcommand\Re{\operatorname{Re}}
\renewcommand\Im{\operatorname{Im}}
\newcommand{\n}{\mathbf{n}}
\newcommand\G{\mathbf{G}}
\newcommand{\K}{\mathbf{K}}
\newcommand{\one}{\mathbf{1}}
\newcommand{\C}{\mathbf{C}}
\title[Democracy-like properties for sequences with gaps]{Extensions of democracy-like properties for sequences with gaps}
\author[M. Berasategui]{Miguel Berasategui}
\address{Miguel Berasategui
	\\
	IMAS - UBA - CONICET - Pab I, Facultad de Ciencias Exactas y Naturales \\ Universidad de Buenos Aires \\ (1428), Buenos Aires, Argentina}
\email{mberasategui@dm.uba.ar}
\author[P.\ M. Bern\'a]{Pablo M. Bern\'a}
\address{Pablo M. Bern\'a\\
	Departamento de Matem\'atica Aplicada y Estad\'istica, Facultad de Ciencias Econ\'omicas y Empresariales, Universidad San Pablo-CEU, CEU Universities\\ Madrid, 28003 Spain.}
\email{pablo.bernalarrosa@ceu.es}
\begin{document}
\subjclass[2010]{41A65, 41A46, 41A17, 46B15, 46B45.}

\keywords{Non-linear approximation, greedy bases, weak greedy algorithm, quasi-greedy basis.}
\thanks{The first author was supported by ANPCyT  PICT-2018-04104. The second author was supported by Grants PID2019-105599GB-100 (Agencia Estatal de Investigación, Spain) and 20906/PI/18 from Fundaci\'on S\'eneca (Regi\'on de Murcia, Spain).
}

\begin{abstract}
In \cite{O2015}, T. Oikhberg introduced and studied variants of the greedy and weak greedy algorithms for sequences with gaps. In this paper, we extend some of the notions that appear naturally in connection with these algorithms to the context of sequences with gaps. In particular, we will consider sequences of natural numbers for which the inequality $n_{k+1}\le \C n_{k}$ or $n_{k+1}\le \C+n_k$ holds for a positive constant $\C$ and all $k$, and find conditions under which the extended notions are equivalent their regular counterparts. 
\end{abstract}

\maketitle

\section{Introduction}\label{sectionintroduction}

Let $\X$ be a separable, infinite dimensional Banach space over the field $\mathbb F=\mathbb R$ or $\mathbb C$, with dual space $\X^*$. A \textit{fundamental minimal system}  $\B=(\be_i)_{i\in \N}\subset \X$ is a sequence that satisfies the following: 
	\begin{enumerate}[\upshape (i)]
		\item $\X=\overline{[\be_i \colon i\in\N]}$;
		\item there is a (unique) sequence $\B^*=(\be_i^*)_{i=1}^{\infty}\subset \X^*$ of biorthogonal functionals, that is, $\be_k^*(\be_i)=\delta_{k,i}$ for all $k,i\in\N$.
\end{enumerate}
If $\B$ verifies the above conditions and 
$$
\be_i^*(x)=0\qquad \forall i\in\N\Longrightarrow x=0 \qquad\text{(totality)},
$$
we say that $\B$ is a \emph{Markushevich basis}. If there is also a positive constant $\C$ such that
$$\Vert S_m(x)\Vert \leq \C \Vert x\Vert \qquad\forall x\in \X,\,\forall m\in \N,$$ where $S_m$ is the $m$th partial sum $\sum_{i=1}^{m}\be_i^*(x)\be_i$, we say that $\B$ is a \textit{Schauder basis}. Its basis constant $\K$ is the minimum $\C$ for which this inequality holds. \\
If, additionally, there is $\C>0$ such that 
$$\Vert P_A(x)\Vert \leq \C \Vert x\Vert \qquad\forall x\in \X,\, \forall A\subset\N: |A|<\infty,$$ 
where $P_A$ is the projection on $A$ with respect to $\B$ (that is $P_A(x)= \sum_{i\in A}\be_i^*(x)\be_i$), we say that $\B$ is \emph{suppression unconditional}. The suppression unconditionality constant $\C_{su}$ is the minimum $\C$ for which the above holds. Equivalently (though not necessarily with the same constant), $\B$ is \emph{unconditional} if 
$$
\|\sum_{j\in \N}a_j \be_j^*(x)\be_j\|\le \C \|x\|\qquad \forall x\in \X,\,\forall (a_j)_{j\in\N}\subset \F: |a_j|\le 1 \quad\forall j\in \N,
$$
for some $\C>0$.\\
Hereinafter, by a \emph{basis} for $\X$ we mean a fundamental minimal system $\B$ such that both $\B$ and $\B^*$ are semi-normalized, that is 
$$
0<\inf_{i\in \N}\min\{\|\be_i\|,\|\be_i^*\|\}\le \sup_{i\in \N}\max\{\|\be_i\|,\|\be_i^*\|\}<\infty. 
$$
We will use $\B$ to denote a basis, and we define positive constants $\alpha_1, \alpha_2$ as follows: 
$$
\alpha_1:=\sup_{i\in\N}\|\be_i\|\qquad\text{and}\qquad\alpha_2:=\sup_{i\in\N}\|\be_i^*\|.
$$

In 1999, S. V. Konyagin and V. N. Temlyakov introduced the Thresholding Greedy Algorithm (TGA), which has become one of the most important algorithms in the field of non-linear approximation, and has been studied by researchers such as F. Albiac, J. L. Ansorena, S. J. Dilworth, N. J. Kalton, D. Kutzarova, V. N. Temlyakov and P. Wojtaszczyk, among others.  The algorithm essentially chooses for each $x\in \X$ the largest coefficients in modulus with respect to a basis. A relaxed version of this algorithm was introduced by V. N. Temlyakov in \cite{Tem2}.  Fix $t\in (0,1]$.
We say that a set $A(x,t):=A$ is a $t$-\textbf{greedy set} for $x\in\X$ if
$$\min_{i\in A}\vert\be_i^*(x)\vert\geq t \max_{i\not\in A}\vert\be_i^*(x)\vert.$$

\noindent A $t$-\textbf{greedy sum} of order $m$ (or an $m$-term $t$-greedy sum) is the projection
$$\mathbf{G}_m^t(x)=\sum_{i\in A}\be_i^*(x)\be_i,$$
where $A$ is a $t$-greedy set of cardinality $m$. The collection $(\mathbf{G}^t_m)_{m=1}^\infty$ is called the \textbf{Weak Thresholding Greedy Algorithm} (WTGA) (see \cite{Tem,Tem2}), and we denote by $\mathcal G_m^t$ the collection of $t$-greedy sums $\G^t_m$ with $m\in\mathbb N$. If $t=1$, we talk about greedy sets and greedy sums $\G_m$.\\
Different types of convergence of these algorithms have been studied in several papers, for instance \cite{DKK2003, DKKT, KT}. For $t=1$, a central concept in these studies is the notion of quasi-greediness (\cite{KT}). 

\begin{defi}
We say that $\mathcal B$ is quasi-greedy if there exists a positive constant $\mathbf C$ such that
$$\Vert \mathbf{G}_m(x)\Vert \leq \mathbf C\Vert x\Vert,\; \forall x\in\mathbb X, \forall m\in\mathbb N.$$
\end{defi}

The relation between quasi-greediness and the convergence of the algorithm was given by P. Wojtaszczyk in \cite{Wo}: a basis is quasi-greedy if and only 
$$\lim_n \G_n(x)=x,\; \forall x\in\X.$$\\
Recently, T. Oikhberg, in \cite{O2015}, introduced and studied a variant of the WTGA where only the $t$-greedy sums with order in a given increasing sequence of positive integers $\mathbf{n}=(n_k)_{k=1}^\infty$ are considered. In this context, we will consider two types of \emph{gaps} of such a sequence: the \emph{quotient gaps} of the sequence are the quotients $\left(\frac{n_{k+1}}{n_k}\right)_{k}$ when $n_{k+1}>n_{k}+1$, whereas the \emph{additive gaps} of the sequence are the differences $n_{k+1}-n_k$ in such cases (although it is the only sequence without gaps, for the sake of convenience we will allow $\n=\N$ in our proofs and definitions unless otherwise stated). \\
In our context, Oikhberg's central definition is as follows: given $\mathbf{n}=(n_k)_{k=1}^\infty\subset \N$ a strictly increasing sequence $n_1<n_2<...$, a basis $\mathcal B$ is $\n$-$t$-quasi-greedy if
\begin{eqnarray}\label{qO}
\lim_k \G_{n_k}^t(x)=x,
\end{eqnarray}
for any $x\in\X$ and any choice of $t$-greedy sums $\G_{n_k}^t(x)$. In \cite[Theorem 2.1]{O2015}, the author shows that for sequences with gaps there is also a close connection between the boundedness of  t-greedy sums and the convergence of the algorithm. Indeed, $\B$ is $\n$-$t$-quasi greedy if and only of there is $\C>0$ such that 

\begin{eqnarray}\label{q}
\Vert \mathbf{G}^t_n(x)\Vert\leq \C\Vert x\Vert,\; \forall x\in\mathbb X, \forall \G_n^t\in\mathcal G_n^t, \forall n\in\mathbf{n}.
\end{eqnarray}
We will use the notation $\C_{q,t}$ for the minimum $\C$ for which \eqref{q} holds, and we will say that $\B$ is $\C_{q,t}$-$\n$-$t$-quasi-greedy. Of course, if the basis is quasi-greedy, it is $\n$-quasi-greedy for any sequence $\n$ and, moreover, it is also $\n$-$t$-quasi greedy for all $0<t\le 1$ (see \cite[Theorem 2.1]{O2015}, \cite[Lemmas 2.1, 2.3]{KT2002}, \cite[Proposition 4.5]{DKSWo2012}, \cite[Lemma 2.1, Lemma 6.3]{DKO2015}). The reciprocal is false as \cite[Proposition 3.1]{O2015} shows and, in fact, this result shows that for any sequence $\n$ that has \emph{arbitrarily large quotient gaps} (see Definition~\ref{definitionquotientgaps} below), there are Schauder bases that are $\n$-$t$-quasi greedy for all $0<t\le 1$, but not quasi-greedy. On the other hand, it was recently proven that if $\n$ has bounded quotient gaps, a Schauder basis that is $\n$-quasi-greedy is also quasi-greedy (\cite[Theorem 5.2]{BB}). 
\begin{definition}\label{definitionquotientgaps}
	Let $\n=(n_k)_{k\in \N}$ be a strictly increasing sequence of natural numbers with gaps. We say that $\n$ has arbitrarily large quotient gaps if
	$$\limsup_{k\rightarrow\infty}\frac{n_{k+1}}{n_k}=\infty.$$
	
	\noindent Alternatively, for $l\in\mathbb N_{>1}$, we say that $\n$ has $l$-bounded quotient gaps if
$$\frac{n_{k+1}}{n_k}\leq l,$$
for all $k\in\mathbb N$, and we say that it has bounded quotient gaps if it has $l$-bounded quotient gaps for some natural number $l\ge 2$.
\end{definition}
We will also need the following classification: 

\begin{definition}\label{definitiondifferencegaps}
	Let $\n=(n_k)_{k\in \N}$ be a strictly increasing sequence of natural numbers with gaps. We say that $\n$ has arbitrarily large additive gaps if
	$$\limsup_{k\rightarrow\infty}n_{k+1}-{n_k}=\infty.$$
	
	\noindent Alternatively, for $l\in\mathbb N_{>1}$, we say that $\n$ has $l$-bounded additive gaps if $ n_{k+1}-n_{k}\le l$ for all $k\in\mathbb N$, and we say that it has bounded additive gaps if it has $l$-bounded additive gaps for some natural number $l\ge 2$.
\end{definition}

Several properties that appear naturally in connection to these algorithms have been studied in the literature. In \cite{KT}, Konyagin and Temlyakov characterized greedy bases (that is, bases where the greedy algorithm produces the best possible approximation) as those that are unconditional and \textit{democratic}, where democratic bases are those bases such that there is $\C>0$ such that 
$$
\|\sum_{j\in A}\be_j\|\le \C\|\sum_{j\in B}\be_j\|\qquad\forall A,B\subset\N: |A|\le |B|<\infty. 
$$ 
A similar characterization was proven in \cite{DKKT} for almost greedy bases, which are quasi-greedy and democratic. In papers such as \cite{AA2017, AW, DKOSZ, Wo}, the authors studied properties such as symmetry for largest coefficients - which has been used to characterize $1$-almost greediness $1$-greediness - and unconditionality for constant coefficients - which is used for example to characterize superdemocracy.   \\
Here, motivated by the theory introduced by Oikhberg in \cite{O2015} and by some of the examples from \cite{BBG} and \cite{BBGHO}, we extend some of the aforementioned concepts to the context of sequences with gaps, and study their relations with their standard counterparts, that is the notions for $\n=\N$. 
 \\This paper is organized as follows: in Section~\ref{sectionnuccc} we introduce and study the notions of $\n$-unconditionality for constant coefficients and the $\n$-UL property. Section~\ref{sectiondemocracy} focuses on the concepts of $\n$-democracy and other democracy-like properties, whereas Section \ref{sectionnsymmetry} looks at $\n$-symmetry for largest coefficients and closely related properties. In Section~\ref{sectionexamples}, we consider two families of examples that are used throughtout the paper. \\
We will use the following notation throughout the paper - in addition to that already introduced: for $A$ and $B$ subsets of $\mathbb N$, we write $A<B$ to mean that  $\max A<\min B$. If $m\in \mathbb N$, we write $m < A$ and $A < m$ for $\{ m\} < A$ and  
$A <\{ m\}$ respectively (and we use the symbols ``$>$'', ``$\ge$'' and ``$\le$'' similarly). Also, $A\cupdot B$ means the union of $A$ and $B$ with $A\cap B=\emptyset$, and $\N_{>k}$ means the set $\N\setminus \lbrace 1,\dots,k\rbrace$. 

For $A\subset\N$ finite and a basis $\B$, $\Psi_A$ denotes the set of all collections of sequences $\bfe = (\e_n)_{n\in A}\subset \mathbb F$ such that $\vert \varepsilon_n\vert=1$ and
$$
\bff_{\bfe A}[\mathcal B,\X]:=\bff_{\bfe A}=\sum_{n\in A}\e_n \be_n.
$$
If $\bfe\equiv 1$, we just write $\bff_A$. Also, every time we have index sets $A\subset B$ and $\bfe\in \Psi_B$, we write $\bff_{\bfe A}$ considering the  natural restriction of $\bfe$ to $A$, with the convention that $\bff_{\bfe A}=0$ if $A=\emptyset$.\\
As usual, by $\supp{(x)}$ we denote the support of $x\in \X$, that is the set $\{i\in \mathbb{N}: \be_i^*(x)\not=0\}$. For $x\in X$ and $1\le p\le \infty$, by $\|x\|_{p}$ we mean the $\ell_p$-norm of $(\be_i^*(x))_{i}$ when it is well-defined. Finally, we set 
\begin{equation*}
\kappa:=
\begin{cases}
1 & \text{ if } \F=\R;\\
2 & \text{ if } \F=\mathbb{C}.
\end{cases}
\end{equation*}

\section{Unconditionality for constant coefficients} \label{sectionnuccc}
In the literature, it is well known that every quasi-greedy basis is unconditional for constant coefficients, that is, for every finite set $A$ and every sequence of signs $\e\in\Psi_A$,
$$\Vert\one_{\e A}\Vert \approx \Vert\one_A\Vert.$$
This condition was introduced by P. Wojtaszczyk in \cite{Wo} and it is the key to characterize superdemocracy using democracy (see for instance \cite[Lemma 3.5]{BBG} for more details), among other applications. Here, we consider a natural extension for sequences with gaps.

\begin{definition}\label{nuccc} We say that $\B$ is $\n$-unconditional for constant coefficients if there is $\C>0$ such that 
\begin{eqnarray}\label{ucc}
	\|\bff_{\bfe A}\|\le \C\|\bff_{\bfe' A}\|
\end{eqnarray}
	for all $A\subset \N$ with $|A|\in\n$ and all $\bfe, \bfe'\in \Psi_A$. The smallest constant verifying \eqref{ucc} is denoted by $\mathbf{K}_{u}$ and we say that $\mathcal B$ is $\K_u$-$\n$-unconditional for constant coefficients. If $\n=\mathbb N$, we say that $\mathcal B$ is $\mathbf{K}_u$-unconditional for constant coefficients.
\end{definition}

The following result gives sufficient conditions under which $\n$-unconditionality for constant coefficients entails the classical notion - and then, it is equivalent to it. 

\begin{proposition}\label{propositionucccboundedgaps}Let $\B$ be a basis that is $\K_u$-$\mathbf{n}$-unconditional for constant coefficients. Then:
\begin{enumerate}[i)]
\item\label{boundedlineargapsucc}If $\n$ has $l$-bounded additive gaps, $\B$ is $\C$-unconditional for constant coefficients with $\C\le \max\{(n_1-1)\alpha_1\alpha_2, \K_u+\K_u l\alpha_1\alpha_2 +l\alpha_1\alpha_2  \}$.
\item \label{boundedquotientgapssucc}If $\n$ has $l$-bounded quotient gaps and $\B$ is Schauder with constant $\K$, then $\B$ is $\C$-unconditional for constant coefficients with $\C\leq \max\lbrace (n_1-1)\alpha_1\alpha_2, (2l-1)\K_u\K\rbrace. $
\end{enumerate}
\end{proposition}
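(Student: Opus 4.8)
The plan is to reduce an arbitrary finite set $A$ to one (or a controlled union) of sets whose cardinalities lie in $\n$, estimate the error terms, and then apply the hypothesis \eqref{ucc}. Throughout I will use freely that $\|\bff_{\bfe F}\|\le \sum_{n\in F}|\e_n|\,\|\be_n\|\le |F|\alpha_1$ for any finite $F$ and any signs, and the dual estimate $\|\bff_{\bfe F}\|\ge |\be_m^*(\bff_{\bfe F})|/\alpha_2 = 1/\alpha_2$ for $m\in F$, so that $\|\bff_{\bfe F}\|\ge 1/\alpha_2$; these give the ``trivial'' regime when $|A|<n_1$, namely $\|\bff_{\bfe A}\|\le |A|\alpha_1 \le (n_1-1)\alpha_1 \le (n_1-1)\alpha_1\alpha_2\|\bff_{\bfe' A}\|$, which accounts for the first term in both maxima. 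So from now on I assume $|A|\ge n_1$ and fix $\bfe,\bfe'\in\Psi_A$; I want $\|\bff_{\bfe A}\|\le \C\|\bff_{\bfe' A}\|$.

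For part \eqref{boundedlineargapsucc} (bounded additive gaps): since $|A|\ge n_1$ and the gaps $n_{k+1}-n_k\le l$, there is some $k$ with $n_k\le |A|< n_{k+1}\le n_k+l$. Write $|A| = n_k + r$ with $0\le r< l$, split $A = A_0\cupdot A_1$ with $|A_0|=n_k\in\n$ and $|A_1|=r$, and triangulate: $\|\bff_{\bfe A}\|\le \|\bff_{\bfe A_0}\| + \|\bff_{\bfe A_1}\| \le \|\bff_{\bfe A_0}\| + r\alpha_1 \le \|\bff_{\bfe A_0}\| + l\alpha_1$. Now apply \eqref{ucc} to the set $A_0$ with sign collections $\bfe|_{A_0}$ and $\bfe'|_{A_0}$ to get $\|\bff_{\bfe A_0}\|\le \K_u\|\bff_{\bfe' A_0}\|\le \K_u\big(\|\bff_{\bfe' A}\| + r\alpha_1\big)\le \K_u\|\bff_{\bfe' A}\| + \K_u l\alpha_1$. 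Combining, $\|\bff_{\bfe A}\|\le \K_u\|\bff_{\bfe' A}\| + (\K_u l + l)\alpha_1$. Finally use $\|\bff_{\bfe' A}\|\ge 1/\alpha_2$ to absorb the additive constant: $(\K_u l + l)\alpha_1 \le (\K_u l+l)\alpha_1\alpha_2\|\bff_{\bfe' A}\|$, which yields the constant $\K_u + \K_u l\alpha_1\alpha_2 + l\alpha_1\alpha_2$ as claimed.

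For part \eqref{boundedquotientgapssucc} (bounded quotient gaps, $\B$ Schauder with constant $\K$): again pick $k$ with $n_k\le |A|< n_{k+1}\le l\,n_k$. The idea is to cover $A$ by $t$-many translated blocks of size $n_k$ where $t$ is the least integer with $t\,n_k\ge |A|$; since $|A|< l n_k$ we get $t\le l$, hence $A$ is a union of $t\le l$ sets each of size $n_k$ (allowing overlaps, or taking the last block of size exactly $n_k$ possibly overlapping the previous one). Rather than triangulating crudely, the cleaner route is to use the Schauder structure: order $A = \{a_1<\dots<a_{|A|}\}$ and realize $\bff_{\bfe A}$ and its subsums of ``consecutive-in-$A$'' blocks via partial-sum projections along the basis, so that each block of size $n_k$ is obtained from $\bff_{\bfe A}$ by a composition of at most two partial sum operators (a head minus a shorter head), costing a factor $\le 2\K$ per block in one direction and comparing the two sign patterns on each block via $\K_u$. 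Summing the $t\le l$ blocks and tracking that the overlaps only help, one arrives at $\|\bff_{\bfe A}\|\le (2l-1)\K_u\K\,\|\bff_{\bfe' A}\|$; the precise bookkeeping that turns ``$2\K$ per block times $l$ blocks'' into the sharp $(2l-1)\K_u\K$ (by noting one of the $2l$ partial-sum invocations is the identity, or by a more careful nested decomposition) is where the constant is pinned down.

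\textbf{Main obstacle.} The additive case is essentially a triangle-inequality bookkeeping exercise and I expect no real difficulty. The genuine work is in \eqref{boundedquotientgapssucc}: one must pass from a set $A$ of arbitrary size to sets of size exactly $n_k\in\n$ while controlling the \emph{signs} simultaneously with the \emph{supports}, and do so through Schauder projections (which only respect the linear order on $\N$, not the combinatorics of $A$) in a way tight enough to produce the stated constant $(2l-1)\K_u\K$ rather than something like $2l\,\K_u\K$. Getting the overlapping blocks to not cost extra, and making the two sign systems $\bfe,\bfe'$ interact with the block decomposition without an extra multiplicative $\K_u$ per block, is the delicate point; I would handle it by decomposing $\bff_{\bfe A}$ as a telescoping sum of exactly-$n_k$-blocks with disjoint new indices at each step, applying $\K_u$ once per block to swap $\bfe$ for $\bfe'$ on that block, and then reassembling on the $\bfe'$ side using at most $2l-1$ partial-sum projections.
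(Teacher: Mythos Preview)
Your argument for part \ref{boundedlineargapsucc} is correct and essentially identical to the paper's: split off a subset of size $n_k\in\n$, bound the small remainder by $l\alpha_1$, apply $\K_u$ to the large piece, reverse-triangulate on the $\bfe'$ side, and absorb the additive error using $\|\bff_{\bfe' A}\|\ge 1/\alpha_2$.

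For part \ref{boundedquotientgapssucc} your outline has the right skeleton but leaves the central difficulty unresolved. Any \emph{disjoint} decomposition of $A$ into consecutive blocks of size $n_{k_0}$ necessarily produces one short block, say $A_1$ with $|A_1|<n_{k_0}$, and you cannot apply $\K_u$ to $A_1$ directly. Your proposed fixes (overlapping blocks, telescoping with ``exactly-$n_k$-blocks with disjoint new indices'', or ``one of the $2l$ projections is the identity'') do not close this gap: the triangle inequality on the $\bfe$ side forces disjoint pieces, while the sign swap via $\K_u$ forces each piece to have cardinality in $\n$; you cannot have both simultaneously without an extra idea.

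The paper's resolution is a padding-plus-averaging trick. Place the short block $A_1$ \emph{first} (i.e.\ $A_1$ consists of the smallest $|A_1|$ elements of $A$), let $B$ be the next $n_{k_0}-|A_1|$ elements of $A$, and write
\[
\|\bff_{\bfe A_1}\|\le \max_{\epsilon\in\{-1,1\}}\|\bff_{\bfe A_1}+\epsilon\bff_B\|\le \K_u\|\bff_{\bfe'(A_1\cup B)}\|\le \K_u\K\|\bff_{\bfe' A}\|.
\]
The first inequality is the elementary averaging $\|u\|\le\max\{\|u+v\|,\|u-v\|\}$; the second applies $\K_u$ since $|A_1\cup B|=n_{k_0}\in\n$; the third uses that $A_1\cup B$ is an \emph{initial} segment of $A$, so a single partial-sum projection (cost $\K$, not $2\K$) suffices. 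The remaining $m-1\le l-1$ full-size blocks each cost $2\K_u\K$ as you noted, giving the total $(2l-1)\K_u\K$. This averaging step is precisely the ingredient your plan is missing, and it is also what pins down $(2l-1)$ rather than $2l$.
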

\begin{proof} 
\ref{boundedlineargapsucc} 
Fix a finite set $A\subset\N$ with $|A|\not\in \mathbf{n}$, and $\bfe, \bfe' \in \Psi_A$. If $|A|<n_1$, we have
	\begin{equation}
		\|\bff_{\bfe A}\|\le |A|\alpha_1\le (n_1-1)\alpha_1\le (n_1-1)\alpha_1\alpha_2\|\bff_{\bfe' A}\|.\label{smallercardinal39}
	\end{equation}
If $|A|>n_1$, let 
$$
k_0:=\max_{k\in \N}\{n_k<|A|\},
$$
and choose $A_1\subset A$ with $|A_1|=n_{k_0}$. We have 
\begin{eqnarray*}
\|\bff_{\bfe A}\|&\le&\|\bff_{\bfe A_1}\|+\|\bff_{\bfe A\setminus A_1}\|\le \K_u\|\bff_{\bfe' A_1}\|+l \alpha_1\alpha_2  \|\bff_{\bfe' A}\|\\
&\le&  \K_u\|\bff_{\bfe' A}\|+\K_u \|\bff_{\bfe' A\setminus A_1}\|  +l \alpha_1\alpha_2  \|\bff_{\bfe' A}\|\le (\K_u+\K_u l\alpha_1\alpha_2 +l\alpha_1\alpha_2)\|\bff_{\bfe' A}\|,
\end{eqnarray*}
which, when combined with \eqref{smallercardinal39}, gives \ref{boundedlineargapsucc}. \\
\ref{boundedquotientgapssucc} Fix $A\subset\N, \bfe, \bfe' \in \Psi_A$ as before. If $|A|<n_1$, then by the same argument given above we have \eqref{smallercardinal39}. On the other hand, if $|A|>n_1$, define $k_0$ as above. Since $\mathbf{n}$ has $l$-bounded quotient gaps and $n_{k_0}<|A|<n_{k_0+1}\le l n_{k_0}$, there is $2\le m\le l$ and a partition of $A$ into nonempty disjoint sets $(A_j)_{1\le j\le m}$ such that
	\begin{equation}
		|A_1|\le n_{k_0},\qquad |A_j|=n_{k_0}\forall 2\le  j\le m,\qquad A_j<A_{j+1}\forall 1\le j\le m-1.\nonumber
	\end{equation}
	For each $2\le j \le m$, we get 
	\begin{eqnarray}
		\|\bff_{\bfe A_j}\|&\le& \K_u\|\bff_{\bfe' A_j}\|\le 2\K_u\K\|\bff_{\bfe' A}\|. \label{rightbounb39}
	\end{eqnarray}
	Let $B$ be the (perhaps empty) set consisting of the first $n_{k_0}-|A_1|$ elements of $A\setminus A_1$. We have
	$$
	\|\bff_{\bfe A_1}\|\le \max_{\epsilon\in \{-1,1\}}\|\bff_{\bfe A_1}+\epsilon \bff_{B}\|\le \K_u\|\bff_{\bfe' A_1}+ \bff_{\bfe' B}\|\le \K_u\K\|\bff_{\bfe' A}\|. 
	$$
	From this and \eqref{rightbounb39}, it follows by the  triangle inequality that 
\begin{equation}
\|\bff_{\bfe A}\|\le (2l-1)\K_u \K\|\bff_{\bfe' A}\|.\label{biggercardinal39}
\end{equation}
The proof is completed combining \eqref{smallercardinal39} and \eqref{biggercardinal39}.
\end{proof}

In the case $\n=\N$, it is is known that quasi-greediness implies a property that is stronger than unconditionality for constant coefficients, namely the UL property: if $A$ is a finite set, then for any sequence $(a_i)_{i\in A}$,
\begin{equation}\label{ul}
\min_{i\in A}|a_i|\|\mathbf{1}_{A}\|\lesssim \|\sum\limits_{i\in A}a_i\be_i\|\lesssim \max_{i\in A}|a_i|\|\mathbf{1}_{A}\|.
\end{equation}
This relation was shown for the first time in \cite{DKKT} when $\F=\R$ and, for the complex case, the result was proved in \cite{Bt}. Moreover, the UL property has own life since in \cite[Section 5.5]{BBG}, the authors gave the first example in the literature of a basis in a Banach space such that \eqref{ul} is satisfied but the basis is not quasi-greedy. Now, we extend this notion to the context of sequences with gaps.

\begin{definition}\label{definitionnUL} We say that a basis $\mathcal B$ has the $\mathbf{n}$-UL property if there are positive constants $\C_1,\C_2$ such that
	\begin{equation}
	\frac{1}{\C_1}\min_{i\in A}|a_i|\|\mathbf{1}_{A}\|\le \|\sum\limits_{i\in A}a_i\be_i\|\le \C_2\max_{i\in A}|a_i|\|\mathbf{1}_{A}\|\label{nul}
	\end{equation}
	for all $A\subset \N$ with $|A|\in \mathbf{n}$ and all scalars $(a_i)_{i\in A}$. If $\n=\mathbb N$, we say that $\mathcal B$ has the UL property with constants $\C_1$ and $\C_2$.
\end{definition}

For sequences with (in either sense) bounded gaps, we have the following result, similar to Lemma~\ref{propositionucccboundedgaps}.

\begin{proposition}\label{propositionDboundedgaps}Let $\B$ be a basis that has the $\n$-UL property with constants $\C_1$ and $\C_2$. The following hold: 
\begin{enumerate}[i)]
\item \label{nUL>ULbounded} If $\B$ has $l$-bounded additive gaps, $\B$ has the UL property with constants $\C_1', \C_2'$ verifying the following bounds:
$$
\C_1'\le \max\{(n_1-1)\alpha_1\alpha_2,\C_1+l\alpha_1\alpha_2+\C_1 l\alpha_1\alpha_2\},
$$
and 
$$
\C_2'\le \max\{(n_1-1)\alpha_1\alpha_2,\C_2+l\alpha_1\alpha_2+\C_2 l\alpha_1\alpha_2\}.
$$
\item \label{nUL>ULboundedSchauder} If $\B$ has $l$-bounded quotient gaps and $\B$ is Schauder with constant $\K$, $\B$ has the UL property with constants $\C_1', \C_2'$ verifying the following bounds:
$$
\C_1'\le \max\{(n_1-1)\alpha_1\alpha_2,\K^2\C_1+2(l-1)\C_1\K\},
$$
and 
$$
\C_2'\le \max\{(n_1-1)\alpha_1\alpha_2,\K^2\C_2+2(l-1)\C_2\K\}.
$$
\end{enumerate}
\end{proposition}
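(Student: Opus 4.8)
The plan is to mimic the proof of Proposition~\ref{propositionucccboundedgaps}. In both parts it suffices to fix a finite set $A\subset\N$ with $|A|\notin\n$ and scalars $(a_i)_{i\in A}$, and to bound $\|\sum_{i\in A}a_i\be_i\|$ above by a multiple of $\max_{i\in A}|a_i|\,\|\one_A\|$ and to bound $\min_{i\in A}|a_i|\,\|\one_A\|$ above by a multiple of $\|\sum_{i\in A}a_i\be_i\|$. I would first dispose of the case $|A|<n_1$ using only that $\B$ is semi-normalized: $\|\one_A\|\le(n_1-1)\alpha_1$, while $\|\one_A\|\ge\alpha_2^{-1}$ (because $|\be_j^*(\one_A)|=1$ for $j\in A$) and $\min_{i\in A}|a_i|\le\alpha_2\|\sum_{i\in A}a_i\be_i\|$ (test a minimal coefficient against its functional); combining these with the triangle inequality gives both inequalities with constant $(n_1-1)\alpha_1\alpha_2$, which is the first entry of each maximum. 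From here on I assume $|A|>n_1$ and set $k_0:=\max\{k:n_k<|A|\}$, so $n_{k_0}<|A|<n_{k_0+1}$.

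For \ref{nUL>ULbounded}, the $l$-bounded additive gap hypothesis gives $|A|-n_{k_0}\le n_{k_0+1}-n_{k_0}\le l$, so I fix $A_1\subset A$ with $|A_1|=n_{k_0}\in\n$ and set $A_2:=A\setminus A_1$, with $|A_2|\le l$. Exactly as in Proposition~\ref{propositionucccboundedgaps}\,\ref{boundedlineargapsucc}, anything supported on $A_2$ is controlled crudely (its norm is $\le l\alpha_1\le l\alpha_1\alpha_2\|\one_A\|$ since $\|\one_A\|\ge\alpha_2^{-1}$, and a maximal coefficient there is $\le\alpha_2\|\sum_{i\in A}a_i\be_i\|$), while anything supported on $A_1$ has cardinality in $\n$ and is governed by the $\n$-UL property. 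Splitting $\sum_{i\in A}a_i\be_i$ along $A=A_1\cup A_2$, applying the $\n$-UL estimates to $A_1$, and using $\|\one_{A_1}\|\le(1+l\alpha_1\alpha_2)\|\one_A\|$ for the upper bound and $\|\one_A\|\le\|\one_{A_1}\|+l\alpha_1$ for the lower bound, I expect to land on the constants $\C_2+l\alpha_1\alpha_2+\C_2 l\alpha_1\alpha_2$ and $\C_1+l\alpha_1\alpha_2+\C_1 l\alpha_1\alpha_2$.

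For \ref{nUL>ULboundedSchauder}, following Proposition~\ref{propositionucccboundedgaps}\,\ref{boundedquotientgapssucc}, I would use $n_{k_0}<|A|<n_{k_0+1}\le l n_{k_0}$ to partition $A$ into consecutive blocks $A_1<A_2<\dots<A_m$ with $2\le m\le l$, $|A_1|\le n_{k_0}$, and $|A_j|=n_{k_0}\in\n$ for $2\le j\le m$. Writing $x=\sum_{i\in A}a_i\be_i=\sum_{j=1}^m P_{A_j}(x)$ and $\one_A=\sum_{j=1}^m\one_{A_j}$, the triangle inequality reduces everything to estimating each block. For $2\le j\le m$, the block $A_j$ is an interval relative to $A$, so $P_{A_j}(x)$ and $\one_{A_j}$ are differences of two partial sums of $x$ and $\one_A$; hence their norms are $\le 2\K\|x\|$ and $\le2\K\|\one_A\|$, and combined with the $\n$-UL estimates on the set $A_j$ this contributes $2\C_2\K$ (resp.\ $2\C_1\K$) per block. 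For the remainder block $A_1$, whose cardinality need not lie in $\n$, the trick is to enlarge it to the prefix $\hat A_1\subseteq A$ of $A$ of cardinality exactly $n_{k_0}$ (adding the next $n_{k_0}-|A_1|$ elements of $A$, all of which lie in $A_2$) and to keep the already-given coefficients $(a_i)_{i\in\hat A_1}$. Since $A_1\subseteq\hat A_1\subseteq A$ are prefixes, $\|\sum_{i\in A_1}a_i\be_i\|\le\K\|\sum_{i\in\hat A_1}a_i\be_i\|\le\K^2\|x\|$ and $\|\one_{A_1}\|\le\K\|\one_{\hat A_1}\|\le\K^2\|\one_A\|$; as $|\hat A_1|\in\n$ and $\hat A_1\subseteq A$ (so $\max_{i\in\hat A_1}|a_i|\le\max_{i\in A}|a_i|$ and $\min_{i\in\hat A_1}|a_i|\ge\min_{i\in A}|a_i|$), applying the $\n$-UL estimates to $\hat A_1$ makes the $A_1$-block contribute $\C_2\K^2$ (resp.\ $\C_1\K^2$). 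Summing with $m-1\le l-1$ yields $\K^2\C_2+2(l-1)\C_2\K$ and $\K^2\C_1+2(l-1)\C_1\K$.

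The step I expect to be the main obstacle is precisely the handling of the remainder block $A_1$ in \ref{nUL>ULboundedSchauder}. Since $|A_1|$ need not lie in $\n$, the $\n$-UL property cannot be applied to $A_1$ directly, and the naive repair of padding $A_1$ with fresh small coefficients up to cardinality $n_{k_0}$ fails: the padded set can have cardinality comparable to $n_{k_0}$, so it creates an extra term $\min_{i\in A}|a_i|\,\|\one_B\|$ that cannot be absorbed into $\|x\|$. The resolution is to pad with indices that already belong to $A$ and reuse their coefficients, so that $\sum_{i\in\hat A_1}a_i\be_i=P_{\hat A_1}(x)$ is a genuine partial sum of $x$; this is exactly what produces the extra factor of $\K$ in the $A_1$-block compared with Proposition~\ref{propositionucccboundedgaps}\,\ref{boundedquotientgapssucc} (one $\K$ for the prefix projection $A_1\subseteq\hat A_1$, one for comparing $\|\one_{\hat A_1}\|$ with $\|\one_A\|$, resp.\ $\|\sum_{i\in\hat A_1}a_i\be_i\|$ with $\|x\|$), and makes the final constants agree with those in the statement; the bound $\K\ge1$ is used only to tidy up the arithmetic.
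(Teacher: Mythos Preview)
Your proposal is correct and follows essentially the same argument as the paper's proof: the small case $|A|<n_1$, the splitting $A=A_1\cup A_2$ with $|A_2|\le l$ in part \ref{nUL>ULbounded}, and the ordered partition $A_1<\dots<A_m$ with the prefix-enlargement trick $\hat A_1$ (the paper calls it $B$) in part \ref{nUL>ULboundedSchauder} all match the paper exactly, and your bookkeeping recovers the stated constants. The only cosmetic difference is that in part \ref{nUL>ULbounded} the paper additionally takes $A_1$ to be a greedy set for the coefficients, but this choice is never actually used in the estimates, so your unspecified $A_1$ is equally valid.
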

\begin{proof}
\ref{nUL>ULbounded}	Fix a finite set $A\subset \N$ with $|A|\not\in \mathbf{n}$, and scalars $(a_i)_{i\in A}$. If $|A|<n_1$, then 
\begin{equation}
\min_{i\in A}|a_i|\|\bff_{A}\|\le \min_{i\in A}|a_i||A|\alpha_1\le (n_1-1)\alpha_1\alpha_2\|\sum\limits_{i\in A}a_i\be_i\|,\label{onesidesmallA}
\end{equation}
and 
\begin{equation}
\|\sum\limits_{i\in A}a_i\be_i\|\le \max_{i\in A}|a_i|n_1\alpha_1\le (n_1-1)\alpha_1\alpha_2\|\bff_{A}\|.\label{anothersidesmallA}
\end{equation}

On the other hand, if $|A|>n_1$, let 
	$$
	k_0:=\max_{k\in \N}\{n_k<|A|\},
	$$
and choose $A_1\subset A$ a greedy set with $|A_1|=n_{k_0}$. We have 
\begin{eqnarray}
\min_{i\in A}|a_i|\|\bff_{A}\|&\le& \min_{i\in A}|a_i|\|\bff_{A_1}\|+\min_{i\in A}|a_i|\|\bff_{A\setminus A_1}\|\le \C_1\left\Vert\sum_{i\in A_1}a_i\be_i\right\Vert+l\alpha_1\alpha_2 \left\Vert\sum_{i\in A}a_i\be_i\right\Vert\nonumber\\
&\le& (\C_1+l\alpha_1\alpha_2)\left\Vert\sum_{i\in A}a_i\be_i\right\Vert+\C_1\left\Vert\sum_{i\in A\setminus A_1}a_i\be_i\right\Vert\nonumber\\
&\le& (\C_1+l\alpha_1\alpha_2)\left\Vert\sum_{i\in A}a_i\be_i\right\Vert+\C_1 l\alpha_1\max_{i\in A}|a_j|\nonumber\\
&\le& (\C_1+l\alpha_1\alpha_2+\C_1 l\alpha_1\alpha_2)\left\Vert\sum_{i\in A}a_i\be_i\right\Vert, \label{onesidebigA}
\end{eqnarray}
and 
\begin{eqnarray}
\left\Vert\sum_{i\in A}a_i\be_i\right\Vert&\le& \left\Vert\sum_{i\in A_1}a_i\be_i\right\Vert+\left\Vert\sum_{i\in A\setminus A_1}a_i\be_i\right\Vert\le \C_2\max_{i\in A_1}|a_i|\|\bff_{A_1}\|+l\alpha_1\max_{i\in A\setminus A_1}|a_i|\nonumber\\
&\le&  \C_2\max_{i\in A_1}|a_i|\|\bff_{A}\|+\C_2\max_{i\in A_1}|a_i|\|\bff_{A\setminus A_1}\|+l\alpha_1\alpha_2\max_{i\in A\setminus A_1}|a_i|\|\bff_{A}\|\nonumber\\
&\le& \max_{i\in A}|a_i|(\C_2+\C_2l\alpha_1\alpha_2+l\alpha_1\alpha_2)\|\bff_{A}\|.\label{anothersidebigA}
\end{eqnarray}
The proof of \ref{nUL>ULbounded} is completed combining \eqref{onesidesmallA}, \eqref{anothersidesmallA}, \eqref{onesidebigA}, and \eqref{anothersidebigA}.\\	
\ref{nUL>ULboundedSchauder}  Fix a finite set $A\subset \N$ with $|A|\not\in \mathbf{n}$, and scalars $(a_i)_{i\in A}$. The case $|A|<n_1$ is handled as in the proof of \ref{nUL>ULbounded}, so we assume $|A|>n_1$ and we set $k_0$ as before. Since $\mathbf{n}$ has $l$-bounded quotient gaps, there is $2\le m\le l$ and a partition of $A$ into nonempty  sets $(A_j)_{1\le j\le m}$ such that
	\begin{equation}
	|A_1|\le n_{k_0},\qquad |A_k|=n_{k_0}\forall 2\le  j\le m,\qquad\text{and}\qquad A_j<A_{j+1}\forall 1\le j\le m-1.\nonumber
	\end{equation}
	For each $2\le j \le m$, applying the $\mathbf{n}$-UL property and the Schauder condition we get 
	
	\begin{eqnarray}
	\min_{i\in A}|a_i|\|\bff_{A_j}\|&\le& \min_{i\in A_j}|a_i|\|\bff_{A_j}\|\le  \C_1\|\sum\limits_{i\in A_j}a_i\be_i\|\le 2\C_1\K\|\sum\limits_{i\in A}a_i \be_i\|. \label{rightbounb3}
	\end{eqnarray}
	Let $B$ be the set consisting of the first $n_{k_0}$ elements of $A$. Since $A_1$ is the set consisting of the first $|A_1|\le n_{k_0}$ elements of $A$, we have
	$$
	\min_{i\in A}|a_i|\|\bff_{A_1}\|\le \min_{i\in B}|a_i|\K\|\bff_{B}\|\le \K \C_1\|\sum\limits_{i\in B}a_i\be_i\|\le \K^2\C_1\|\sum\limits_{i\in A}a_i\be_i\|. 
	$$
Combining this with \eqref{rightbounb3}, it follows by the triangle inequality that 
	\begin{equation}
	\min_{i\in A}|a_i|\|\bff_{A}\|\le (\K^2\C_1+2(l-1)\C_1\K)\|\sum\limits_{i\in A}a_i\be_i\|. \label{biggercardinal3}
	\end{equation}
Similarly,  for each $2\le j \le m$, applying the $\mathbf{n}$-UL and Schauder conditions we get 
	\begin{eqnarray}
	\|\sum\limits_{i\in A_j}a_i\be_i\|&\le& \C_2\max_{i\in A_j}|a_i|\|\bff_{A_j}\|
\le 2\K \C_2\max_{i\in A}|a_i|\|\bff_{A}\|, \label{rightbounb9}
	\end{eqnarray}
and
	$$
	\|\sum\limits_{i\in A_1}a_i\be_i\|\le \K\|\sum\limits_{i\in B}a_i\be_i\|\leq \K \C_2\max_{i\in B}|a_i|\|\bff_B\|\le \K^2\C_2\max_{i\in A}|a_i|\|\bff_A\|. 
	$$
	From this and \eqref{rightbounb9}, by the triangle inequality we obtain 
	$$
	\|\sum\limits_{i\in A}a_i\be_i\|\le (\K^2\C_2+2(l-1)\K \C_2)\max_{i\in A}|a_i|\|\bff_A\|. 
	$$
The proof is completed combining the above inequality with \eqref{onesidesmallA}, \eqref{anothersidesmallA} and \ref{biggercardinal3}. 
\end{proof}

Propositions~\ref{propositionucccboundedgaps} and~\ref{propositionDboundedgaps} give sufficient conditions on a sequence with gaps $\n$ under which $\n$-unconditionality for constant coefficients and the $\n$-UL property are equivalent to their standard counterparts. Our next result shows that these conditions are also necessary.

\begin{proposition}\label{propositionboundedgapsnULnoUl} Let $\n$ be a sequence. The following hold: 
\begin{itemize}
\item If $\n$ has arbitrarily large additive gaps, there is a Banach space $\X$ with a Markushevich basis $\B$ that has the $\n$-UL property, but is not unconditional for constant coefficients.
\item If $\n$ has arbitrarily large quotient gaps, there is a Banach space $\X$ with a Schauder basis $\B$ that has the $\n$-UL property, but is not unconditional for constant coefficients.
\end{itemize}
\end{proposition}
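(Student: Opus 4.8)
The plan is to build, for each bullet, an explicit Banach space $\X$ carrying a basis $\B$ whose conditionality is \emph{confined to the cardinalities lying in the gaps of $\n$}. Two preliminary remarks shape the construction. First, arbitrarily large quotient gaps force arbitrarily large additive gaps (if $n_{k+1}\ge R n_k$ then $n_{k+1}-n_k\ge n_k(R-1)\ge R-1$), so the second construction is a refinement of the first; the genuine extra demand there is that $\B$ be Schauder, and the compensating extra room is that the gaps may be taken multiplicatively long. Second, exactly as in the proofs of Propositions~\ref{propositionucccboundedgaps}--\ref{propositionDboundedgaps}, the \emph{upper} half $\|\sum_{i\in A}a_i\be_i\|\le \C_2\max_i|a_i|\,\|\mathbf 1_A\|$ of the $\n$-UL inequality is essentially automatic whenever the norm is dominated by the $\ell_1$-norm of the coordinates; so the content to be extracted is the \emph{lower} bound $\|\sum_{i\in A}a_i\be_i\|\ge \C_1^{-1}\min_i|a_i|\,\|\mathbf 1_A\|$, and crucially that it hold with one constant \emph{uniformly} in $A$ with $|A|\in\n$.

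Concretely I would take $\X=\bigl(\bigoplus_{j\ge1}Z_j\bigr)$ for a suitable sum, where the index $j$ runs over a rapidly increasing sequence $k_1<k_2<\cdots$ along which the gap $(n_{k_j},n_{k_j+1})$ is enormous — enormous enough that one may pick a cardinality $m_j\notin\n$ sitting at distance $\rho_j\to\infty$ from every element of $\n$, with $\rho_j$ moreover dominating $\sum_{l<j}m_l$; this is exactly where the hypothesis on the gaps is spent, and can be arranged greedily. The block $Z_j$ is an $m_j$-dimensional space with a norm that is conditional only at full support and only mildly so: roughly the maximum of $\|\cdot\|_\infty$ with a weight $M_j/m_j$ (where $1\ll M_j\le \rho_j$) times a summing functional on the block — or, for the Schauder version, times the maximum of the prefix sums, which makes the block basis monotone and hence the concatenated basis a monotone Schauder basis. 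The routine verifications then are: $\B$ is semi-normalized (the weight is $\le1$, so coordinate functionals have norm $1$); $\B$ is a Markushevich basis, and Schauder in the second case; and $\B$ is \emph{not} unconditional for constant coefficients, witnessed by letting $A$ be the full $j$-th block, so that $\|\mathbf 1_A\|\gtrsim M_j\to\infty$ while $\|\bff_{\bfe A}\|$ stays bounded for a sign pattern $\bfe$ that cancels the summing functional.

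The main obstacle is the uniform lower $\n$-UL bound, and the enemy is an adversary who loads one (or a few) blocks $Z_j$ to nearly full support — forcing a term of order $M_j$ into $\|\mathbf 1_A\|$, which cannot be avoided, since a one-line duality argument shows that in any semi-normalized space the conditionality at full support propagates to every subset missing only $o(M_j)$ coordinates — and then picks cancelling signs so that $\|\sum_{i\in A}a_i\be_i\|$ only registers a bounded amount per nonempty block. Placing $m_j$ deep in the gap, together with $\rho_j\gg\sum_{l<j}m_l$ (so that $m_j$, and indeed any admissible partial sum of block sizes, is far from $\n$ while $|A|\in\n$), forces such an $A$ to carry $\gtrsim\rho_j$ further coordinates outside block $j$; the construction must be set up so that this padding is necessarily \emph{expensive}, which is the delicate point — in a naive $\ell_1$- or $c_0$-sum the adversary defeats this by concentrating the padding in a single large later block, so the summand norms, the growth rate of the $m_j$ (relative to the $\rho_j$), and the weights $M_j$ have to be coordinated, and one must simultaneously handle configurations with several heavy blocks at once. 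Once that coordination is pinned down, the lower UL estimate follows by comparing, block by block, the contribution of $A$ to $\|\mathbf 1_A\|$ with that of $\sum a_i\be_i$ to its norm; this bookkeeping — not the description of the space — is where essentially all of the work lies, and it is the step I expect to be the real hurdle.
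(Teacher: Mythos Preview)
Your proposal is a plan rather than a proof---you say so yourself---and for the second bullet it is in the right spirit: the paper's Schauder example (following Oikhberg) is indeed a maximum of $\|\cdot\|_2$ with weighted partial sums on disjoint blocks of size $m_i\approx\sqrt{n_{k_i}n_{k_i+1}}$, close to what you sketch. But for the first bullet your scheme has a structural obstruction, not just missing bookkeeping.

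With block norm $\max\{\|\cdot\|_\infty,\,(M_j/m_j)\lvert\sum_i a_i\rvert\}$, conditionality is not confined to full support: a subset $A\subset Z_j$ of size $r$ has $\|\mathbf 1_A\|=\max\{1,(M_j/m_j)r\}$ while alternating signs give norm $1$, so the ratio $\|\mathbf 1_A\|/\|\bff_{\bfe A}\|$ is already $(M_j/m_j)r$ whenever $r>m_j/M_j$. For the $\n$-UL (indeed $\n$-UCC) property to survive you therefore need $\n\cap(m_j/M_j,\,m_j]=\emptyset$, which forces $M_j<m_j/n_{k_j}\le n_{k_j+1}/n_{k_j}$. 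When $\n$ has bounded quotient gaps---take $\n=\{k^2:k\in\N\}$, which has arbitrarily large additive gaps---this bounds $M_j$ uniformly, and UCC does not fail. The outer direct sum is irrelevant here, since the adversary stays inside a single block. Replacing $\|\cdot\|_\infty$ by $\|\cdot\|_2$ on the block does not help either: with $M_j\le\rho_j\lesssim\sqrt{n_{k_j}}\approx\sqrt{m_j}$ the summing term never exceeds the $\ell_2$ term by more than a fixed constant, so again UCC holds. The difficulty is intrinsic to a summing-functional block when the gap is only additively large.

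The paper's route avoids this. In both examples the norm is the maximum of an unconditional democratic background and a conditional perturbation calibrated so that for $|A|\in\n$ the perturbation does not exceed the background by more than a fixed factor; the lower $\n$-UL bound then comes directly from the background. In the Schauder example the background is $\|\cdot\|_2$, and one checks that for $|A|\in\n$ the weighted partial-sum seminorms are all $\le\sqrt{2}\sqrt{|A|}$, whence $\|\sum_{i\in A} a_i\be_i\|\ge\min_i|a_i|\sqrt{|A|}\ge\frac{1}{\sqrt2}\min_i|a_i|\|\mathbf 1_A\|$. In the Markushevich example the perturbing functionals are constrained to \emph{annihilate} $\mathbf 1_{B_i}$ on specified sets $B_i$ with $|B_i|\notin\n$; thus $\|\mathbf 1_{B_i}\|$ is small while a proper subset $D_i\subset B_i$ with $|D_i|\in\n$ has large norm---the reverse of your summing-block picture. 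The genuine work is tuning the perturbation to stay below the background at $\n$-cardinalities, not arguing that padding is forced and expensive.
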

\begin{proof}
See Examples~\ref{examplendemocracylargelineargaps} and~\ref{examplendemocratic}.
\end{proof}

Summing up, we have the following equivalences. 
\begin{corollary}\label{corollaryequivschauderul}Let $\n$ be a sequence with gaps. The following are equivalent:
\begin{enumerate}[i)]
\item \label{boundedquot}$\n$ has bounded quotient gaps. 
\item \label{schaudernucc} Every Schauder basis that is $\n$-unconditional for constant coefficients is  unconditional for constant coefficients. 
\item \label{schaudernul} Every Schauder basis that has the $\n$-UL property has the UL property.
\end{enumerate}
\end{corollary}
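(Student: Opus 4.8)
The plan is to prove the cycle \ref{boundedquot} $\Rightarrow$ \ref{schaudernucc} $\Rightarrow$ \ref{schaudernul} $\Rightarrow$ \ref{boundedquot}, which, together with \ref{boundedquot} $\Rightarrow$ \ref{schaudernul} (also needed), gives all equivalences. The first two implications are essentially already in hand. For \ref{boundedquot} $\Rightarrow$ \ref{schaudernucc}: if $\n$ has bounded quotient gaps, then it has $l$-bounded quotient gaps for some $l\ge 2$, and Proposition~\ref{propositionucccboundedgaps}\ref{boundedquotientgapssucc} says exactly that a Schauder basis which is $\n$-unconditional for constant coefficients is unconditional for constant coefficients, with an explicit constant. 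For \ref{boundedquot} $\Rightarrow$ \ref{schaudernul}: similarly, Proposition~\ref{propositionDboundedgaps}\ref{nUL>ULboundedSchauder} gives that a Schauder basis with the $\n$-UL property has the UL property. So both arrows out of \ref{boundedquot} are immediate citations.

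Next, \ref{schaudernucc} $\Rightarrow$ \ref{schaudernul} is a soft implication that does not even use the gap structure. The point is that the UL property is equivalent to the conjunction of unconditionality for constant coefficients together with the UL property restricted to, say, the same index sets — more precisely, one shows in general that a basis has the UL property if and only if it has the ``democracy-type'' lower and upper bounds that reduce, via unconditionality for constant coefficients, to statements about $\|\bff_A\|$. Concretely, I would argue as follows: suppose \ref{schaudernucc} holds, and let $\B$ be a Schauder basis with the $\n$-UL property. Taking $\bfe,\bfe'\in\Psi_A$ with $|A|\in\n$ and applying \eqref{nul} twice (once to $\sum_{i\in A}\e_i\be_i$ and once to $\sum_{i\in A}\e_i'\be_i$, both of which have all coefficients of modulus $1$) shows that $\B$ is $\n$-unconditional for constant coefficients. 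By hypothesis \ref{schaudernucc}, $\B$ is then unconditional for constant coefficients, i.e. $\|\bff_{\bfe A}\|\approx\|\bff_A\|$ for all finite $A$. Now for an arbitrary finite $A$ and arbitrary scalars $(a_i)_{i\in A}$, one still needs the UL inequalities \eqref{ul}; the $\n$-UL property only gives them when $|A|\in\n$. So a direct reduction is not automatic, and I expect this to be the main obstacle of the corollary: one must show that $\n$-UL plus full unconditionality for constant coefficients upgrades to full UL. The cleanest route is probably to run the splitting argument of Proposition~\ref{propositionDboundedgaps} but replacing the use of bounded gaps by the following observation — since $\n$ has gaps, for any finite $A$ one can still find $A_1\subset A$ with $|A_1|\in\n$ and $|A\setminus A_1|$ ``small'' only when gaps are bounded; when gaps are unbounded this fails, which is precisely why Proposition~\ref{propositionboundedgapsnULnoUl} produces counterexamples. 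Therefore the correct logic is not \ref{schaudernucc} $\Rightarrow$ \ref{schaudernul} in isolation but rather to derive both \ref{schaudernucc} and \ref{schaudernul} from \ref{boundedquot}, and to close the cycle via the negations.

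Thus I would restructure the proof as: \ref{boundedquot} $\Rightarrow$ \ref{schaudernucc} and \ref{boundedquot} $\Rightarrow$ \ref{schaudernul} by the two cited Propositions; \ref{schaudernucc} $\Rightarrow$ \ref{boundedquot} and \ref{schaudernul} $\Rightarrow$ \ref{boundedquot} by contraposition using Proposition~\ref{propositionboundedgapsnULnoUl}. For the contrapositives: if \ref{boundedquot} fails then, $\n$ having gaps, $\limsup_k n_{k+1}/n_k=\infty$, so $\n$ has arbitrarily large quotient gaps, and the second bullet of Proposition~\ref{propositionboundedgapsnULnoUl} furnishes a Schauder basis with the $\n$-UL property that is not unconditional for constant coefficients; since the UL property implies unconditionality for constant coefficients (apply \eqref{ul} to the $\pm1$-coefficient vectors $\bff_{\bfe A}$ and $\bff_{\bfe' A}$), this same basis has the $\n$-UL property but not the UL property, and also is $\n$-unconditional for constant coefficients but not unconditional for constant coefficients. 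This simultaneously negates \ref{schaudernucc} and \ref{schaudernul}. The main subtlety to get right is the elementary lemma ``UL $\Rightarrow$ unconditional for constant coefficients'' (and its $\n$-version), which I would state and prove in one line before assembling the cycle; everything else is bookkeeping of the already-established Propositions.
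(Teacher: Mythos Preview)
Your final restructured argument is correct and is exactly the route the paper takes: \ref{boundedquot}$\Rightarrow$\ref{schaudernucc} and \ref{boundedquot}$\Rightarrow$\ref{schaudernul} come directly from Propositions~\ref{propositionucccboundedgaps}\ref{boundedquotientgapssucc} and~\ref{propositionDboundedgaps}\ref{nUL>ULboundedSchauder}, while the converses follow by contraposition from the second bullet of Proposition~\ref{propositionboundedgapsnULnoUl} together with the one-line observations that the (\,$\n$-)UL property implies (\,$\n$-)unconditionality for constant coefficients. Your initial attempt at a direct \ref{schaudernucc}$\Rightarrow$\ref{schaudernul} was indeed a dead end for the reason you identified, but you self-corrected to the intended proof.
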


\begin{corollary}\label{corollaryequivmarkul}Let $\n$ be a sequence with gaps. The following are equivalent:
\begin{enumerate}[i)]
\item \label{boundedlinearequiv}$\n$ has bounded additive gaps. 
\item \label{nucc>ucc} Every basis that is $\n$-unconditional for constant coefficients is  unconditional for constant coefficients. 
\item \label{Mnucc>ucc} Every Markushevich basis that is $\n$-unconditional for constant coefficients is  unconditional for constant coefficients. 
\item \label{nUL>UL} Every basis that has the $\n$-UL property has the UL property.
\item \label{MnUL>UL} Every Markushevich basis that has the $\n$-UL property has the UL property.
\end{enumerate}
\end{corollary}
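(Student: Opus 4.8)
The plan is to prove Corollary~\ref{corollaryequivmarkul} by establishing the cycle of implications \ref{boundedlinearequiv}$\Rightarrow$\ref{nucc>ucc}$\Rightarrow$\ref{Mnucc>ucc}$\Rightarrow$\ref{boundedlinearequiv} and, in parallel, \ref{boundedlinearequiv}$\Rightarrow$\ref{nUL>UL}$\Rightarrow$\ref{MnUL>UL}$\Rightarrow$\ref{boundedlinearequiv}, using the already-proven propositions as the main engine. Most of the work has in fact already been done: Proposition~\ref{propositionucccboundedgaps}\ref{boundedlineargapsucc} gives \ref{boundedlinearequiv}$\Rightarrow$\ref{nucc>ucc} directly (if $\n$ has $l$-bounded additive gaps, every $\n$-unconditional-for-constant-coefficients basis is unconditional for constant coefficients with an explicit constant), and Proposition~\ref{propositionDboundedgaps}\ref{nUL>ULbounded} gives \ref{boundedlinearequiv}$\Rightarrow$\ref{nUL>UL} in the same way.

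The implications \ref{nucc>ucc}$\Rightarrow$\ref{Mnucc>ucc} and \ref{nUL>UL}$\Rightarrow$\ref{MnUL>UL} are trivial, since every Markushevich basis (that is also semi-normalized together with its dual, i.e. a basis in the sense fixed in the introduction) is in particular a basis; a statement quantified over all bases specializes to one quantified over all Markushevich bases. One should only note that our notion of ``basis'' already includes totality is not required, so strictly the Markushevich case is a genuine sub-case and nothing needs to be checked.

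It remains to close both cycles with the contrapositive direction: if $\n$ does \emph{not} have bounded additive gaps, then by Definition~\ref{definitiondifferencegaps} it has arbitrarily large additive gaps, and Proposition~\ref{propositionboundedgapsnULnoUl} (first bullet) produces a Banach space $\X$ with a Markushevich basis $\B$ that has the $\n$-UL property but is not unconditional for constant coefficients. This single example simultaneously refutes \ref{Mnucc>ucc} and \ref{MnUL>UL}: it refutes \ref{MnUL>UL} by construction, and it refutes \ref{Mnucc>ucc} because the $\n$-UL property implies $\n$-unconditionality for constant coefficients (apply \eqref{nul} with all $a_i$ of modulus $1$ to get $\|\bff_{\bfe A}\|\le \C_2\|\one_A\|$ and $\|\one_A\|\le \C_1\|\bff_{\bfe A}\|$ for every $\bfe\in\Psi_A$ with $|A|\in\n$, hence \eqref{ucc} holds with $\K_u\le\C_1\C_2$), while a basis that fails unconditionality for constant coefficients obviously cannot be obtained from an $\n$-unconditional-for-constant-coefficients basis via implication \ref{Mnucc>ucc}. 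Thus $\neg$\ref{boundedlinearequiv}$\Rightarrow\neg$\ref{Mnucc>ucc} and $\neg$\ref{boundedlinearequiv}$\Rightarrow\neg$\ref{MnUL>UL}, completing both cycles.

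The only mildly delicate point — and the step I would expect to require the most care in the write-up — is the bookkeeping observation just made, namely that $\n$-UL implies $\n$-unconditionality for constant coefficients with an explicit constant, so that the \emph{same} counterexample serves both cycles; everything else is either an already-established proposition or a formal implication between quantified statements. I would present the proof as a short paragraph stating the two chains of implications, citing Propositions~\ref{propositionucccboundedgaps}\ref{boundedlineargapsucc}, \ref{propositionDboundedgaps}\ref{nUL>ULbounded} and~\ref{propositionboundedgapsnULnoUl}, noting the trivial specialization to Markushevich bases, and inserting the one-line argument that the $\n$-UL property is stronger than $\n$-unconditionality for constant coefficients.
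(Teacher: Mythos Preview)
Your proposal is correct and matches the paper's approach: the corollary is stated there without proof, as an immediate summary of Propositions~\ref{propositionucccboundedgaps}\ref{boundedlineargapsucc}, \ref{propositionDboundedgaps}\ref{nUL>ULbounded} and~\ref{propositionboundedgapsnULnoUl}, together with the trivial specialization from bases to Markushevich bases. Your only addition is the explicit remark that the $\n$-UL property implies $\n$-unconditionality for constant coefficients (and that failure of unconditionality for constant coefficients implies failure of the UL property), which is indeed the small bookkeeping step needed to make the single counterexample of Proposition~\ref{propositionboundedgapsnULnoUl} close both cycles.
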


Note that there is a significant difference between the behavior of the extensions to our context of the UL property and unconditionality for constant coefficients for general bases or Markushevich bases on one hand, and Schauder bases on the other hand. Similar differences occur when we consider democracy-like properties, as we shall see in the next section.

\section{$\n$-democracy and some democracy-like properties}\label{sectiondemocracy}
In greedy approximation theory,  democracy and several similar  properties are widely used for the characterization of greedy-like bases (see for instance in \cite{DKKT, DKO2015, KT}). Here, we study natural extensions of some of these properties to the general context of sequences with gaps. We begin our study with the extensions of two well-known properties. 

\begin{defi}\label{definitionsuperdemo}
	We say that $\mathcal{B}$ is $\n$-superdemocratic if there exists a positive constant $\C$ such that 
	\begin{eqnarray}\label{demo}
	\Vert \mathbf{1}_{\varepsilon A}\Vert \leq \C\Vert \mathbf{1}_{\varepsilon' B}\Vert,
	\end{eqnarray}
	for all $A,B$ with $\vert A\vert\leq \vert B\vert$, $\vert A\vert,\vert B\vert\in\n$ and $\varepsilon\in\Psi_A, \varepsilon'\in\Psi_B$. The smallest constant verifying \eqref{demo} is denoted by $\Delta_s$ and we say that $\mathcal B$ is $\Delta_s$-$\n$-superdemocratic.
	
	If \eqref{demo} is satisfied for $\varepsilon\equiv\varepsilon'\equiv 1$, we say that $\mathcal B$ is $\Delta_d$-$\n$-democratic, where $\Delta_d$ is again the smallest constant for which the inequality holds. 
If $\n=\mathbb N$, we say that $\B$ is $\Delta_d$-democratic and $\Delta_s$-superdemocratic.
\end{defi}

\begin{remark}
	\rm As in the standard case (\cite{DKKT}), it is immediate that a basis is $\n$-superdemocratic if and only if it is $\n$-democratic and $\n$-unconditional for constant coefficients.
\end{remark}

\begin{remark}\label{remarkalsosuperdemocracy}\rm Note that a straightforward convexity argument gives that basis $\mathcal B$ is $\Delta_s$-$\n$-superdemocratic if and only if $\Delta_s$ is the minimum $\C$ for which 
	\begin{eqnarray*}
	\Vert \mathbf{1}_{\varepsilon A}\Vert \leq \C\Vert \mathbf{1}_{\varepsilon' B}\Vert,
	\end{eqnarray*}
for all $A,B$ with $\vert A\vert= \vert B\vert \in\n$ and all $\varepsilon\in\Psi_A, \varepsilon'\in\Psi_B$. Alternatively, this is equivalent to ask that $|A|\le |B|$ and only that $|B|\in \n$.  
\end{remark}

As in the cases of the $\n$-UL property and $\n$-unconditionality for constant coefficients, a key distinction is whether the sequences have (in either sense) bounded gaps. We begin with the results for Schauder bases. 

\begin{lemma}\label{lemmdemaarbitrarilylargegaps}Let $\n$ be a sequence with arbitrarily large quotient gaps. There is a Banach space $\X$ with a Schauder basis $\B$ that is $\n$-superdemocratic but not democratic. 
\end{lemma}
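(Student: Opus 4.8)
The plan is to construct the example directly, mimicking the classical constructions of non-democratic quasi-greedy-type bases (as in \cite{O2015}, \cite{BB}, \cite{BBG}) but exploiting the arbitrarily large quotient gaps to make the democracy failure invisible to set cardinalities lying in $\n$. First I would fix, using the hypothesis $\limsup_k n_{k+1}/n_k=\infty$, a strictly increasing subsequence $(n_{k_j})_j$ together with ``witness'' integers $d_j$ with $n_{k_j} < d_j < n_{k_j+1}$ and $d_j/n_{k_j}\to\infty$ (and also $n_{k_j+1}/d_j\to\infty$ after passing to a further subsequence); these $d_j$ are precisely the cardinalities at which democracy will be violated, and by construction none of them lies in $\n$. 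The underlying space $\X$ will be built as a direct sum (or a mixed/James-type norm) of finite-dimensional blocks $\X_j$, where the $j$-th block carries a basis of length $n_{k_j+1}$ (or slightly larger) whose norm is arranged so that $\|\mathbf 1_{A}\|$ depends on $|A|$ in a prescribed, non-monotone-in-the-bad-range way: within block $j$, a set of size $n_{k_j}$ and a set of size $d_j$ should have wildly different norms, while any two sets whose sizes are both in $\n$ have comparable norms.

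The key steps, in order, are: (1) set up the block spaces and the global norm on $\X=\left(\bigoplus_j \X_j\right)$ so that each canonical basis is a Schauder basis for its block with a uniformly bounded basis constant, hence the concatenated system $\B$ is a Schauder basis for $\X$ (here one must be careful about the order of enumeration so the partial-sum operators stay uniformly bounded across blocks — a standard but essential bookkeeping point); (2) compute, or estimate from above and below, $\|\mathbf 1_{\bfe A}\|$ for $A$ contained in a single block, showing that it is controlled by a function of $|A|$ that is constant (up to a uniform factor) on each interval $[n_{k_j},\, ?]$ but jumps between $n_{k_j}$ and $d_j$; (3) deduce $\n$-superdemocracy by checking \eqref{demo} only for sets of size in $\n$ — using Remark~\ref{remarkalsosuperdemocracy} one may assume $|A|=|B|\in\n$, and then spreading $A$ and $B$ across the blocks and using the $\ell_p$-type structure of the global norm together with the fact that the ``bad'' cardinalities are skipped; (4) exhibit the failure of democracy by taking, inside a single block $j$, a set of size $n_{k_j}$ and one of size $d_j$ and letting $j\to\infty$, so that the democracy ratio blows up.

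The main obstacle I expect is Step (3): making sure that $\n$-superdemocracy genuinely holds for \emph{all} pairs of sets with cardinalities in $\n$, including sets that straddle many blocks, while simultaneously keeping the intra-block norm growth steep enough (between $n_{k_j}$ and $d_j$) to kill democracy. The tension is that a norm which grows very fast on $[n_{k_j},d_j]$ tends, if one is not careful, to also distinguish cardinalities in $\n$ that happen to lie on either side of that interval; the resolution is to choose the block lengths and the subsequence $(k_j)$ sparse enough that consecutive elements of $\n$ never ``see'' more than one interior bad interval per block, and to let the global norm be an $\ell_1$- or $c_0$-type (or weighted $\ell_p$) combination so that contributions from different blocks add in a controlled way. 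A secondary, more routine obstacle is verifying the uniform Schauder constant — this is handled by enumerating each block consecutively and noting that a partial sum either lies within one block (bounded by that block's basis constant) or is a full sum of finitely many blocks plus an initial segment of the next, which is again bounded. Once the construction and these two points are in place, \ref{boundedquot} of the lemma follows, and (as indicated) the same construction doubles as the witness for the Schauder part of Proposition~\ref{propositionboundedgapsnULnoUl}, since the $\n$-UL property will be automatic from Step (2) and the failure of unconditionality for constant coefficients from Step (4).
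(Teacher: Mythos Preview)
Your high-level plan --- choose witness cardinalities $d_j$ in the large gaps, build a block space where the norm of $\bff_{\bfe A}$ misbehaves at sizes $d_j$ but not at sizes in $\n$ --- is the same idea the paper uses, and it would succeed if carried out. But as written it remains a sketch, and the obstacle you single out (Step~(3), cross-block sets) is precisely where the paper's concrete choice of norm saves a lot of work that your outline leaves unspecified.

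The paper does not take a direct-sum-of-blocks route with an $\ell_p$-type combination. Instead it puts a single norm on $\mathtt{c}_{00}$,
\[
\Bigl\|\sum_j a_j\be_j\Bigr\|=\max\Bigl\{\|(a_j)\|_2,\ \sup_i \tfrac{c_i}{\sqrt{m_i}}\max_{1\le l\le m_i}\Bigl|\sum_{j=\widetilde m_i+1}^{\widetilde m_i+l}(-1)^{\theta(j)}a_j\Bigr|\Bigr\},
\]
with $c_i=(n_{k_i+1}/n_{k_i})^{1/4}$ and $m_i=\lfloor\sqrt{n_{k_i}n_{k_i+1}}\rfloor$ (this is Oikhberg's construction with a sign tweak). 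The $\ell_2$ term gives the uniform lower bound $\|\bff_{\bfe B}\|\ge\sqrt{|B|}$ for \emph{every} finite $B$, regardless of how it spreads across the blocks, so your ``sets that straddle many blocks'' worry disappears: one never has to decompose $B$ block-by-block. The upper bound $\|\bff_{\bfe B}\|\le\sqrt{2}\sqrt{|B|}$ for $|B|\in\n$ is then a direct two-case computation (either $|B|\le n_{k_i}$ or $|B|\ge n_{k_i+1}$ for each $i$), giving $\n$-superdemocracy with $\Delta_s\le\sqrt{2}$. Non-democracy (in fact non-conservativeness) comes from sets of size $\approx m_i/2$ inside the $i$-th block, whose block-functional contribution is of order $c_i\sqrt{m_i}$.

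One small error: your final sentence claims Step~(4) also gives failure of unconditionality for constant coefficients. It does not --- comparing $\bff_A$ with $|A|=n_{k_j}$ to $\bff_B$ with $|B|=d_j$ tests democracy, not sign-invariance. In the paper, UCCC failure is a separate computation using the \emph{same} set with alternating versus constant signs, which is exactly why the $(-1)^{\theta(j)}$ factors are in the norm. (If one only needs the present lemma, the paper observes in a remark that replacing the alternating sums by $\sum|a_j|$ gives a $1$-unconditional example that is still $\n$-superdemocratic and not conservative.)
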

\begin{proof}
See Example~\ref{examplendemocratic} and Remark~\ref{remarkunconditional}. 
\end{proof}

Before we prove our next proposition, we prove a lemma that will be used throughout the paper. 

\begin{lemma}\label{lemmakey}Let $\X$ be a Banach space, $\n$ a sequence with $l$-bounded quotient gaps, and $A\subset \N$ a finite nonempty set, and $(x_j)_{j\in A}\subset \X$. The following hold: 
\begin{enumerate}[i)]
\item \label{previous} Either 
$$
\max_{E\subset A}\left \Vert \sum_{j\in E}x_j\right\Vert\le (n_1-1) \max_{j\in A} \left \Vert x_j\right \Vert 
$$
or there is $B\subset A$ with $|B|\in \n$ such that 
$$
\max_{E\subset A}\left \Vert \sum_{j\in E}x_j\right\Vert \le l\left \Vert \sum_{j\in B}x_j\right \Vert . 
$$
\item \label{new} Given $(b_j)_{j\in A}$ with $|b_j|\ge 1$ for all $j\in A$, either 
\begin{align*}
\max_{\substack{(a_j)_{j\in A}\subset \mathbb{F} \\|a_j|\le 1\forall j\in A}} \left \Vert \sum_{j\in A}a_jx_j\right\Vert\le 2\kappa (n_1-1) \max_{j\in A} \left \Vert x_j\right \Vert 
\end{align*}
or there is $B\subset A$ with $|B|\in \n$ such that 
$$
\max_{\substack{(a_j)_{j\in A}\subset \F\\|a_j|\le 1\forall j\in A}}\left \Vert \sum_{j\in A}a_j x_j\right \Vert \le 2\kappa l\left \Vert \sum_{j\in B}b_j x_j\right \Vert.
$$
\end{enumerate}
\end{lemma}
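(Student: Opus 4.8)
The plan is to prove both parts by a single combinatorial device: given the family $(x_j)_{j\in A}$, I want to locate a subset $B\subseteq A$ whose cardinality lands in $\n$ and on which a suitable partial sum is comparable to the maximal partial sum over all subsets of $A$. The starting point is to pick $E_0\subseteq A$ with $\|\sum_{j\in E_0}x_j\| = \max_{E\subseteq A}\|\sum_{j\in E}x_j\|$ (a maximum over finitely many subsets). If $|E_0|<n_1$, then trivially $\|\sum_{j\in E_0}x_j\|\le |E_0|\max_{j\in A}\|x_j\|\le (n_1-1)\max_{j\in A}\|x_j\|$, which is the first alternative in~\ref{previous}. So assume $|E_0|\ge n_1$ and let $k_0:=\max\{k:n_k\le |E_0|\}$, so that $n_{k_0}\le |E_0|<n_{k_0+1}\le l\,n_{k_0}$ by the $l$-bounded quotient gap hypothesis. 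Partition $E_0$ into $m$ nonempty pieces $E_0=\bigcupdot_{i=1}^m B_i$ with $1\le m\le l$, where $|B_1|\le n_{k_0}$ and $|B_i|=n_{k_0}$ for $2\le i\le m$ (this is possible precisely because $|E_0|<l\,n_{k_0}$); if $|E_0|=n_{k_0}$ take $m=1$, $B_1=E_0$. By the triangle inequality, $\|\sum_{j\in E_0}x_j\|\le\sum_{i=1}^m\|\sum_{j\in B_i}x_j\|\le m\max_i\|\sum_{j\in B_i}x_j\|\le l\max_i\|\sum_{j\in B_i}x_j\|$. The piece $B_i$ achieving the max has $|B_i|\in\{|B_1|,n_{k_0}\}$; if it is one of the pieces of full size $n_{k_0}\in\n$ we are done with $B=B_i$. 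The remaining case is that the maximizing piece is $B_1$ with $|B_1|<n_{k_0}$; then enlarge $B_1$ to a set $B$ of size exactly $n_{k_0}$ by adjoining $n_{k_0}-|B_1|$ indices of $A\setminus B_1$, and replace the "extra" vectors by their negatives if needed so that they do not decrease the norm — more precisely, observe $\|\sum_{j\in B_1}x_j\|\le\max_{\epsilon\in\{\pm1\}}\|\sum_{j\in B_1}x_j+\epsilon\sum_{j\in B\setminus B_1}x_j\|$, and by the triangle inequality (averaging over $\epsilon$) the right side is at most $\|\sum_{j\in B}x_j\| + \|\sum_{j\in B\setminus B_1}x_j\|$; but $\|\sum_{j\in B\setminus B_1}x_j\|\le\max_{E\subseteq A}\|\sum_{j\in E}x_j\|=\|\sum_{j\in E_0}x_j\|$, so absorbing this back gives a bound of the claimed form after adjusting the constant to $l$ (one checks the bookkeeping closes with the factor $l$ rather than $l+1$ because $m\le l-1$ whenever $B_1$ is strictly smaller than $n_{k_0}$ and a piece is being split off). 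This is essentially the same maneuver used in the proof of Proposition~\ref{propositionucccboundedgaps}\ref{boundedquotientgapssucc}.

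For part~\ref{new}, the new ingredient is passing from arbitrary complex coefficients $|a_j|\le1$ to the fixed coefficients $b_j$. I would first reduce the supremum $\max_{|a_j|\le1}\|\sum_{j\in A}a_jx_j\|$ to a partial-sum expression: by convexity/extreme-point considerations the maximum over the polydisc is attained at $|a_j|=1$ for all $j$ when $\F=\C$ (and at $a_j=\pm1$ when $\F=\R$), and then writing each unimodular $a_j=\Re(a_j)+i\,\Im(a_j)$ and splitting real and imaginary parts — each of absolute value $\le1$ — one reduces at the cost of the factor $\kappa$ to bounding $\max_{\pm}\|\sum_{j\in A}\pm x_j\|$ over sign choices, which after another factor of $2$ (triangle inequality separating the $+$ and $-$ indices) is dominated by $2\kappa\max_{E\subseteq A}\|\sum_{j\in E}x_j\|$. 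Now apply part~\ref{previous} to $(x_j)_{j\in A}$: in the first alternative we get the bound $2\kappa(n_1-1)\max_j\|x_j\|$; in the second alternative we have $B\subseteq A$, $|B|\in\n$, with $\max_{E\subseteq A}\|\sum_{j\in E}x_j\|\le l\|\sum_{j\in B}x_j\|$. It remains to replace $\|\sum_{j\in B}x_j\|$ by $\|\sum_{j\in B}b_jx_j\|$, i.e.\ to go from coefficients $1$ to coefficients $b_j$ with $|b_j|\ge1$: since $\|\sum_{j\in B}x_j\|=\|\sum_{j\in B}b_j^{-1}(b_jx_j)\|$ and $|b_j^{-1}|\le1$, this is exactly a supremum of the same type applied to the vectors $(b_jx_j)_{j\in B}$, hence bounded by $2\kappa\max_{E\subseteq B}\|\sum_{j\in E}b_jx_j\|$; but that maximum over $E\subseteq B$ is itself $\le 2\kappa\|\sum_{j\in B}b_jx_j\|$ — no, more carefully, I want only a single clean bound, so I would instead argue directly that $\|\sum_{j\in B}x_j\|\le 2\kappa\|\sum_{j\in B}b_jx_j\|$ by the same real/imaginary splitting applied with the unimodular-or-smaller coefficients $b_j^{-1}$ multiplying the vectors $b_jx_j$, noting that the extremal configuration of such a sum over a fixed index set $B$ with all coefficients of modulus $\le 1$ need not exceed $2\kappa$ times the norm of the all-ones configuration $\sum_{j\in B}b_jx_j$ — and this last fact is again Lemma~\ref{lemmakey}\ref{previous}-style reasoning but now with $\n$ irrelevant since $B$ is already fixed, so in fact the cruder estimate $\|\sum_{j\in B}x_j\|\le\sum_{j\in B}\|x_j\|$ is \emph{not} what I want; rather the point is $\|\sum_{j\in B}x_j\|\le 2\kappa\max_{E\subseteq B}\|\sum_{j\in E}b_jx_j\|$ and then a further factor handles dropping the max. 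Tracking the constants, one lands on the stated $2\kappa l\|\sum_{j\in B}b_jx_j\|$.

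The main obstacle I anticipate is precisely the constant bookkeeping in the last paragraph: getting from $\|\sum_{j\in B}x_j\|$ to $\|\sum_{j\in B}b_jx_j\|$ without picking up extra factors of $l$ or $\kappa$ beyond what is claimed, and making sure the "enlarge $B_1$ to size $n_{k_0}$" step in part~\ref{previous} closes with constant $l$ and not $l+1$. The cleanest route is probably to prove the following atomic sublemma first and use it twice: for any finite nonempty $B$ and any $(c_j)_{j\in B}\subset\F$ with $|c_j|\le1$, $\|\sum_{j\in B}c_j y_j\|\le 2\kappa\max_{E\subseteq B}\|\sum_{j\in E}y_j\|$ — apply it once with $y_j=x_j$, $c_j$ arbitrary (for the reduction at the start of~\ref{new}), and once with $y_j=b_jx_j$, $c_j=b_j^{-1}$ (to convert $\|\sum x_j\|$ into a max over subsets of $B$ of $\|\sum_{j\in E}b_jx_j\|$), and finally combine with part~\ref{previous} applied inside $B$ — but since $|B|\in\n$ already, part~\ref{previous} applied to $B$ gives either the "small" bound (absorbed into the $(n_1-1)$ term, using $|B|\ge n_1$ when $|B|\in\n$ and $B\ne A$-small — one must check $n_1\le|B|$, which holds since $|B|\in\n$) or a further subset, which we do not need; the honest statement is just $\max_{E\subseteq B}\|\sum_{j\in E}b_jx_j\|\le l\|\sum_{j\in B}b_jx_j\|$ via the same partition argument, now with the partition pieces recombined. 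Modulo this careful accounting, the proof is a direct application of the $l$-bounded-quotient-gap partition trick together with the standard real/imaginary decomposition, and I expect it to be short once the sublemma is isolated.
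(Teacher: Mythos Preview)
Your approach to part~\ref{previous} has a genuine gap in the ``small leftover piece'' case. When the maximizing block among the $B_i$ is the short one $B_1$ with $|B_1|<n_{k_0}$, your absorption argument does not close: writing $a=\sum_{j\in B_1}x_j$ and $b=\sum_{j\in B\setminus B_1}x_j$, the bound $\max_{\epsilon}\|a+\epsilon b\|\le \|a+b\|+\|b\|$ is not correct (you get $\|a+b\|+2\|b\|$ at best), and in any case feeding $\|b\|\le\|\sum_{j\in E_0}x_j\|$ back in yields $(1-2m)\|\sum_{j\in E_0}x_j\|\le m\|\sum_{j\in B}x_j\|$, which is vacuous. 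Your side remark that $m\le l-1$ whenever $|B_1|<n_{k_0}$ is also false (take $|E_0|=l\,n_{k_0}-1$). The paper avoids this entirely by a duality trick: work in the real linear span of the $x_j$, pick a norming functional $y^*$ for $\sum_{j\in D}x_j$ (with $D$ the maximizing subset), observe that maximality of $D$ forces $y^*(x_j)\ge 0$ for every $j\in D$, and then take $B\subset D$ to be the $n_{k_0}$ indices with largest $y^*(x_j)$; since $|D|\le l|B|$ one gets $\|\sum_{j\in D}x_j\|=\sum_{j\in D}y^*(x_j)\le l\sum_{j\in B}y^*(x_j)\le l\|\sum_{j\in B}x_j\|$ directly, with no leftover piece to patch.

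For part~\ref{new}, your plan of applying~\ref{previous} to $(x_j)$ and then converting $\|\sum_{j\in B}x_j\|$ into $\|\sum_{j\in B}b_jx_j\|$ is the wrong order and forces you into the circular bookkeeping you yourself flag. The paper instead sets $y_j:=b_jx_j$ \emph{first}, uses $|b_j^{-1}|\le 1$ to get $\max_{|a_j|\le 1}\|\sum_{j\in A}a_jx_j\|\le \max_{|a_j|\le 1}\|\sum_{j\in A}a_jy_j\|$, bounds the latter by $2\kappa\max_{E\subset A}\|\sum_{j\in E}y_j\|$ via the real/imaginary splitting you described, and then applies~\ref{previous} to the family $(y_j)$. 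This lands immediately on $2\kappa l\|\sum_{j\in B}b_jx_j\|$ with no conversion step and no extra factors.
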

\begin{proof}
\ref{previous} 
Define 
$$
\Y:=\left\lbrace \sum_{j\in A}a_jx_j\;\colon a_j\in \R\; \forall j\in A\right\rbrace. 
$$
It is immediate that $\Y$ is a finite dimensional Banach space over $\R$ with the norm inherited from $\X$. Since the norms $\|\cdot\|_{\X}$ and $\|\cdot\|_{\Y}$ are the same for elements of $\Y$, we may work in $\Y$ to establish our result. We will denote the norm by $\|\cdot\|$ as in the statement. \\
Pick $D\subset A$ so that
$$
\left \Vert \sum_{j\in D}x_j\right \Vert \ge \left \Vert \sum_{j\in E}x_j\right \Vert \qquad\forall E\subset A. 
$$
If $|D|< n_1$, then 
\begin{align}
\left \Vert \sum_{j\in E}x_j\right \Vert\le \left \Vert \sum_{j\in D}x_j\right \Vert \le (n_1-1)\max_{j\in A}\left \Vert x_j\right \Vert\qquad \forall E\subset A.\label{smallcardinal7}
\end{align}
On the other hand, if $|D|\ge n_1$, set $$k_0:=\max_{k\in \N}\{n_k\le |D|\},$$ and choose $y^*\in \Y^*$ with $\left \Vert y^*\right \Vert =1$ so that 
$$
y^{*}\left(\sum_{j\in D}x_j\right)=\left \Vert \sum_{j\in D}x_j\right \Vert.
$$
Note that, if $\emptyset \subsetneq E\subset D$, 
$$
\left\vert \sum_{j\in E}y^*(x_j)\right\vert \le  \left \Vert \sum_{j\in E}x_j\right \Vert \le \left \Vert \sum_{j\in D}x_j\right \Vert =\sum_{j\in D}y^*(x_j).
$$
Hence, 
$$
y^*(x_j)\ge 0\qquad\forall j\in D. 
$$
Choose $B\subset D$ with $|B|=n_{k_0}$ so that 
$$
y^*(x_j)\ge y^*(x_i)\qquad\forall j\in B\forall i\in D\setminus B. 
$$
Given that $|D|\le l|B|$, for each $E\subset A$ we have
\begin{align*}
\left \Vert \sum_{j\in E}x_j\right \Vert\le \left \Vert \sum_{j\in D}x_j\right \Vert  =\sum_{j\in D}y^*(x_j)\le l  \sum_{j\in B}y^*(x_j)\le l \left \Vert \sum_{j\in B}x_j\right \Vert.
\end{align*}
The proof of \ref{previous} is completed combining the above inequality with \eqref{smallcardinal7}. \\
\ref{new} For each $j\in A$, let $y_j:=b_j x_j$, and choose $D \subset A$ so that
$$
\left \Vert \sum_{j\in D}y_j\right \Vert \ge \left \Vert \sum_{j\in E}y_j\right \Vert \qquad\forall E\subset A. 
$$
Using convexity we obtain
\begin{align*}
\max_{\substack{(a_j)_{j\in A}\subset \mathbb{F} \\|a_j|\le 1\forall j\in A}} \left\Vert \sum_{j\in A}a_j x_j\right\Vert\le &\max_{\substack{(a_j)_{j\in A}\subset \mathbb{F} \\|a_j|\le 1\forall j\in A}} \left\Vert \sum_{j\in A}a_j y_j\right\Vert\le \max_{\substack{(a_j)_{j\in A}\subset \mathbb{F} \\|a_j|\le 1\forall j\in A}}\left(\left\Vert \sum_{j\in A}\Re(a_j) y_j\right\Vert+\left\Vert \sum_{j\in A}\Im(a_j) y_j\right\Vert\right)\\
\le& \kappa \max_{\substack{\lambda_j\in \{-1,1\}\\ \forall j\in E}}\left\Vert \sum_{j\in A}\lambda_j y_j\right\Vert\le 2\kappa \left \Vert \sum_{j\in D}y_j\right \Vert.
\end{align*}
The proof is completed by an application of \ref{previous} to $(y_j)_{j\in D}$. 
\end{proof}

\begin{proposition}\label{propositionndemdem}Suppose $\mathbf{n}$ has $l$-bounded quotient gaps, and let $\B$ be a  Schauder basis with basis constant $\K$. Then: 
\begin{enumerate}[i)]
\item \label{dem15a} If $\B$ is $\Delta_d$-$\mathbf{n}$-democratic, it is $\C$-democratic with 
$$
\C\le \max\{(n_1-1)\alpha_1\alpha_2, l\K  \Delta_d \}.
$$
\item \label{superdem15a} If $\B$ is $\Delta_s$-$\mathbf{n}$-superdemocratic, it is $\C$-superdemocratic with 
$$
\C\le \max\{(n_1-1)\alpha_1\alpha_2, l\K  \Delta_s \}.
$$
\end{enumerate}
\end{proposition}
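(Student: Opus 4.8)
The plan is to reduce the claim about arbitrary finite sets to sets whose cardinality lies in $\n$, using Lemma~\ref{lemmakey}\ref{previous} applied to the family $(x_j)_{j\in A}$ given by $x_j=\be_j$ (for democracy) or $x_j=\e_j\be_j$ (for superdemocracy). First I would fix finite sets $A,B$ with $|A|\le|B|$ and the relevant sign sequences. The case $|A|<n_1$ is trivial and handled exactly as in the proofs of Proposition~\ref{propositionucccboundedgaps} and Proposition~\ref{propositionDboundedgaps}: $\|\bff_{\bfe A}\|\le|A|\alpha_1\le(n_1-1)\alpha_1\le(n_1-1)\alpha_1\alpha_2\|\bff_{\bfe' B}\|$, since $\|\bff_{\bfe' B}\|\ge\alpha_2^{-1}|\be_j^*(\bff_{\bfe' B})|=\alpha_2^{-1}$ for any $j\in B$. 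So the substance is the case $|A|\ge n_1$ (hence $|B|\ge n_1$ too), where the first alternative of Lemma~\ref{lemmakey}\ref{previous} cannot trigger in the form it is stated — but one must still be slightly careful, so I would simply observe that $\max_{j\in A}\|x_j\|\le\alpha_1$ while $\|\bff_{\bfe B}\|$ is bounded below, which already absorbs that alternative into the $(n_1-1)\alpha_1\alpha_2$ term.

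The main step: apply Lemma~\ref{lemmakey}\ref{previous} to $(x_j)_{j\in A}$ with $x_j=\e_j\be_j$. Since $\bff_{\bfe A}=\sum_{j\in A}x_j$, we get either $\|\bff_{\bfe A}\|\le(n_1-1)\alpha_1$, which combined with the lower bound on $\|\bff_{\bfe' B}\|$ yields the $(n_1-1)\alpha_1\alpha_2$ bound, or there is $A'\subset A$ with $|A'|\in\n$ and $\|\bff_{\bfe A}\|\le l\|\bff_{\bfe A'}\|$. In the latter case, since $|A'|\le|A|\le|B|$, I would choose $B'\subset B$ with $|B'|=|A'|\in\n$ such that $B'$ is an initial segment of $B$ (the first $|A'|$ elements in the natural order); then the Schauder condition gives $\|\bff_{\bfe' B'}\|=\|P_{B'}(\bff_{\bfe' B})\|\le\K\|\bff_{\bfe' B}\|$ because $B'$ consists of the smallest indices of $B$, so $P_{B'}$ restricted to $\mathrm{span}\{\be_j:j\in B\}$ is (up to the biorthogonality bookkeeping) a partial-sum-type projection bounded by $\K$. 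Wait — more carefully, $P_{B'}$ for an initial segment $B'$ of the support $B$ is indeed $\K$-bounded: write $B'$ as the indices of $B$ below some threshold $m$, so $P_{B'}\bff_{\bfe' B}=S_m\bff_{\bfe' B}-S_{m'}\bff_{\bfe' B}$ where $S_{m'}$ kills nothing in $B$; this is a difference of two partial sums, hence has norm $\le 2\K$. To get the cleaner constant $l\K$ as stated, I would instead use that $B'$ an initial segment of $B$ means $\bff_{\bfe'B'}=S_m(\bff_{\bfe'B})$ for the appropriate $m$ (choosing $m$ just after the $|A'|$-th element of $B$ and below the next), so a single partial sum suffices and $\|\bff_{\bfe'B'}\|\le\K\|\bff_{\bfe'B}\|$. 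Then $\n$-superdemocracy (in the equivalent equal-cardinality form of Remark~\ref{remarkalsosuperdemocracy}) gives $\|\bff_{\bfe A'}\|\le\Delta_s\|\bff_{\bfe' B'}\|$, and chaining: $\|\bff_{\bfe A}\|\le l\|\bff_{\bfe A'}\|\le l\Delta_s\|\bff_{\bfe' B'}\|\le l\K\Delta_s\|\bff_{\bfe' B}\|$.

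Part \ref{dem15a} is the special case $\e\equiv\e'\equiv1$, with $x_j=\be_j$ and $\Delta_d$ in place of $\Delta_s$; the argument is verbatim the same, so I would state it as an immediate consequence. Taking the maximum of the two bounds $(n_1-1)\alpha_1\alpha_2$ and $l\K\Delta_s$ (resp.\ $l\K\Delta_d$) over the two alternatives finishes the proof. The step I expect to require the most care is the estimate $\|\bff_{\bfe'B'}\|\le\K\|\bff_{\bfe'B}\|$: one must genuinely use that $B'$ can be taken to be an \emph{initial segment} of $B$ (not just any subset of the right cardinality), because only then is the projection $P_{B'}$ a partial-sum operator restricted to the span of $\B$ over $B$ and hence controlled by the Schauder constant $\K$; an arbitrary subset would only give the suppression-unconditionality constant, which a general Schauder basis need not have. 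Everything else is the triangle inequality and the trivial lower bound on norms of sums of basis vectors.
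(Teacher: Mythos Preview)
Your proof is correct and follows essentially the same route as the paper: apply Lemma~\ref{lemmakey}\ref{previous} to obtain $A'\subset A$ with $|A'|\in\n$ and $\|\bff_{\bfe A}\|\le l\|\bff_{\bfe A'}\|$, take $B'$ to be the initial segment of $B$ of size $|A'|$ so that $\bff_{\bfe' B'}=S_m(\bff_{\bfe' B})$ for suitable $m$ and hence $\|\bff_{\bfe' B'}\|\le\K\|\bff_{\bfe' B}\|$, and chain via $\n$-(super)democracy. The only cosmetic differences are that the paper presents the democracy case first and states the superdemocracy case as ``the same argument,'' and that the paper dispenses with the separate $|A|<n_1$ case by directly splitting on the dichotomy of Lemma~\ref{lemmakey} (your initial $|A|<n_1$ case is already absorbed in the first alternative of that lemma, as you yourself note).
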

\begin{proof}
\ref{dem15a} Fix finite sets $A, B$ with $|A|\le |B|$. If $\|\bff_A\|\le (n_1-1) \alpha_1 $, then \begin{equation}
	\|\bff_A\|\le (n_1-1)\alpha_1\alpha_2\|\bff_B\|.\label{lessthann1demsupdem}
	\end{equation}
Otherwise, by Lemma~\ref{lemmakey} there is $A_0\subset A$ with $|A_0|\in \n$ such that 
$$
\|\bff_{A} \|\le l \|\bff_{A_0}\|. 
$$	
Let $B_0$ be the set consisting in the first $n_{k_0}$ elements of $B$. We have 
\begin{align}
\|\bff_{A_0}\|\le \Delta_d \|\bff_{B_0} \|\le \Delta_d \K\|\bff_B\|.\label{replace}
\end{align}
Thus, 
$$
\|\bff_{A}\|\le l\Delta_d \K \|\bff_{B}\|. 
$$
Combining the above inequality with \eqref{lessthann1demsupdem} we obtain that $\B$ is democratic with constant as in the statement. \\
\ref{superdem15a} This is proved by the same argument as \ref{dem15a}. 
\end{proof}
Note that the Schauder condition in Proposition~\ref{propositionndemdem} can be replaced with unconditionality for constant coefficients.  
\begin{lemma}\label{lemmademdem2}Suppose $\mathbf{n}$ is a sequence with $l$-bounded quotient gaps, and $\B$ is a basis that is $\K_u$-unconditional for constant coefficients. Then: 
\begin{enumerate}[i)]
\item \label{dem15} If $\B$ is $\Delta_d$-$\mathbf{n}$-democratic, then 
\begin{enumerate}[1)]
\item \label{demdem}$\B$ is $\C$-democratic with 
$$
\C\le   \max\{(n_1-1)\alpha_1\alpha_2, l\K_u  \Delta_d \}.
$$
\item \label{demsup}$\B$ is $\mathbf{M}$-superdemocratic with 
$$
\mathbf{M}\le \min\{\max\{(n_1-1)\alpha_1\alpha_2, l\K_u^2  \Delta_d \},2 \kappa \max\{ (n_1-1)\alpha_1\alpha_2, l \K_u\Delta_d\}\}. 
$$
\end{enumerate}
\item \label{superdem15} If $\B$ is $\Delta_s$-$\mathbf{n}$-superdemocratic, it is $\C$-superdemocratic with 
$$
\C\le \max\{(n_1-1)\alpha_1\alpha_2, l\K_u  \Delta_s \}.
$$
\end{enumerate}
\end{lemma}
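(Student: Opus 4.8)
The plan is to follow the proof of Proposition~\ref{propositionndemdem} almost verbatim, with the Schauder inequality $\|\bff_{B_0}\|\le\K\|\bff_B\|$ (there used for $B_0$ a front segment of $B$) replaced by the following ``suppression by averaging'' estimate valid for \emph{arbitrary} subsets: if $C\subset D$ are finite and $\bfe\in\Psi_D$, then $\bff_{\bfe C}$ equals the average of $\bff_{\bfe D}$ and of the signed sum obtained from $\bff_{\bfe D}$ by reversing all signs on $D\setminus C$; as the latter is of the form $\bff_{\bfeta D}$ with $\bfeta\in\Psi_D$, unconditionality for constant coefficients gives $\|\bff_{\bfe C}\|\le\tfrac{1+\K_u}{2}\|\bff_{\bfe D}\|\le\K_u\|\bff_{\bfe D}\|$ (using $\K_u\ge1$), and in particular $\|\bff_{C}\|\le\K_u\|\bff_{D}\|$. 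This estimate, Lemma~\ref{lemmakey}, and the block decomposition of a finite set used in the proof of Proposition~\ref{propositionucccboundedgaps}\ref{boundedquotientgapssucc} will be the only ingredients.

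For \ref{demdem}: given finite $A,B$ with $|A|\le|B|$, if $\|\bff_A\|\le(n_1-1)\alpha_1$ then \eqref{lessthann1demsupdem} gives the first term of the stated bound; otherwise the first alternative in Lemma~\ref{lemmakey}\ref{previous} applied to $(\be_j)_{j\in A}$ is impossible (it would force $\|\bff_A\|\le(n_1-1)\alpha_1$), so the second one produces $A_0\subset A$ with $|A_0|\in\n$, $|A_0|\le|B|$ and $\|\bff_A\|\le l\|\bff_{A_0}\|$; choosing $B_0\subset B$ with $|B_0|=|A_0|$, $\n$-democracy followed by the averaging estimate gives $\|\bff_{A_0}\|\le\Delta_d\|\bff_{B_0}\|\le\K_u\Delta_d\|\bff_B\|$, whence $\|\bff_A\|\le l\K_u\Delta_d\|\bff_B\|$. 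For \ref{superdem15}: split according to whether $|A|<n_1$ — handled directly since $\|\bff_{\bfe A}\|\le|A|\alpha_1\le(n_1-1)\alpha_1\alpha_2\|\bff_{\bfe'B}\|$ — or $|A|\ge n_1$; in the latter case put $k_0:=\max\{k:n_k\le|A|\}$, decompose $A$ into $m\le l$ consecutive blocks $A_1<\cdots<A_m$ with $|A_1|\le n_{k_0}$ and $|A_j|=n_{k_0}\in\n$ for $2\le j\le m$, and fix $B_0\subset B$ with $|B_0|=n_{k_0}$ (possible as $n_{k_0}\le|A|\le|B|$). For each $j$ we have $|A_j|\le|B_0|$ with $|B_0|\in\n$, so by Remark~\ref{remarkalsosuperdemocracy} $\|\bff_{\bfe A_j}\|\le\Delta_s\|\bff_{\bfe'B_0}\|$, and the averaging estimate gives $\|\bff_{\bfe'B_0}\|\le\K_u\|\bff_{\bfe'B}\|$; summing over $j$ yields $\|\bff_{\bfe A}\|\le l\K_u\Delta_s\|\bff_{\bfe'B}\|$.

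For \ref{demsup} I would establish the two bounds in the minimum separately. The bound $\max\{(n_1-1)\alpha_1\alpha_2,\,l\K_u^2\Delta_d\}$ is proved as in \ref{superdem15} but using $\n$-\emph{democracy} on each block and paying one factor $\K_u$ to switch between signed and unsigned sums: for $2\le j\le m$, $\|\bff_{\bfe A_j}\|\le\K_u\|\bff_{A_j}\|\le\K_u\Delta_d\|\bff_{B_0}\|\le\K_u^2\Delta_d\|\bff_{\bfe'B}\|$ (the middle step being $\n$-democracy between the equal-cardinality sets $A_j$ and $B_0$), while for the short block one first enlarges $A_1$ inside $A$ by a disjoint set $D$ so that $|A_1\cup D|=n_{k_0}$ and writes $\bff_{\bfe A_1}$ as the average of two signed sums over $A_1\cup D$ to get $\|\bff_{\bfe A_1}\|\le\K_u\|\bff_{A_1\cup D}\|\le\K_u^2\Delta_d\|\bff_{\bfe'B}\|$. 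For the bound $2\kappa\max\{(n_1-1)\alpha_1\alpha_2,\,l\K_u\Delta_d\}$ I would apply Lemma~\ref{lemmakey}\ref{new} to $(\be_j)_{j\in A}$ with all $b_j=1$: since $\|\bff_{\bfe A}\|\le\max_{|a_j|\le1}\|\sum_{j\in A}a_j\be_j\|$, either this maximum is $\le2\kappa(n_1-1)\alpha_1$ (yielding the first term) or there is $A_0\subset A$ with $|A_0|\in\n$, $|A_0|\le|B|$ and $\|\bff_{\bfe A}\|\le2\kappa l\|\bff_{A_0}\|$, after which one $\n$-democracy step and one averaging step give $2\kappa l\K_u\Delta_d\|\bff_{\bfe'B}\|$; the point of using $b_j=1$ rather than $b_j=\e_j$ is that the reduced vector $\bff_{A_0}$ is then unsigned, so only a single $\K_u$ is incurred.

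The argument is routine, and the one point that needs attention is the bookkeeping of the $\K_u$ factors — in particular, arranging that the second bound in \ref{demsup} reads $l\K_u\Delta_d$ and not $l\K_u^2\Delta_d$ (hence the choice $b_j=1$ above, and the fact that in \ref{superdem15} the unsigned norm is never needed), together with the mechanical checks that every auxiliary set produced by Lemma~\ref{lemmakey} or by the block decomposition has cardinality in $\n$ and at most $|B|$. I do not expect a genuine obstacle.
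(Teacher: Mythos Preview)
Your argument is correct, and for \ref{demdem} and for the second bound in \ref{demsup} it coincides with the paper's proof: both use Lemma~\ref{lemmakey}\ref{previous} (resp.\ \ref{new} with $b_j=1$) followed by $\n$-democracy and one ``averaging plus $\K_u$'' step. The paper differs from you in two places. For \ref{superdem15} and for the first bound in \ref{demsup}, the paper does \emph{not} use the block decomposition $A_1<\cdots<A_m$; instead it applies Lemma~\ref{lemmakey}\ref{previous} once to the vectors $(\e_j\be_j)_{j\in A}$ to obtain a single $A_0\subset A$ with $|A_0|\in\n$ and $\|\bff_{\bfe A}\|\le l\|\bff_{\bfe A_0}\|$, and then runs the chain $\|\bff_{\bfe A_0}\|\le\K_u\|\bff_{A_0}\|\le\K_u\Delta_d\|\bff_{B_0}\|\le\K_u^2\Delta_d\|\bff_{\bfe'B}\|$ (resp.\ $\|\bff_{\bfe A_0}\|\le\Delta_s\|\bff_{\bfe'B_0}\|\le\K_u\Delta_s\|\bff_{\bfe'B}\|$). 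This is shorter and avoids the separate treatment of the short block $A_1$ and the appeal to Remark~\ref{remarkalsosuperdemocracy}; your block decomposition gives the same constants but with more bookkeeping.

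One small point to tighten: the averaging estimate you state yields $\|\bff_{\bfe C}\|\le\K_u\|\bff_{\bfe D}\|$ with the \emph{same} $\bfe$ on both sides, but in the chain for \ref{demsup} you need $\|\bff_{B_0}\|\le\K_u\|\bff_{\bfe'B}\|$, where the signs on $B_0$ (all $1$) need not agree with $\bfe'|_{B_0}$. The fix is immediate---write $\bff_{B_0}=\tfrac12\bigl((\bff_{B_0}+\bff_{\bfe'(B\setminus B_0)})+(\bff_{B_0}-\bff_{\bfe'(B\setminus B_0)})\bigr)$ and bound each summand by $\K_u\|\bff_{\bfe'B}\|$ via $\K_u$-unconditionality for constant coefficients---so the correct general form is $\|\bff_{\bfeta C}\|\le\K_u\|\bff_{\bfe D}\|$ for arbitrary $\bfeta\in\Psi_C$ and $\bfe\in\Psi_D$. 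With that in hand your constants are exactly as claimed.
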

\begin{proof}
\ref{dem15}\ref{demdem}. This is proved by the same argument as Proposition~\ref{propositionndemdem}, with the only difference that instead of \eqref{replace} we get 
\begin{align*}
\|\bff_{A_0}\|\le \Delta_d \|\bff_{B_0} \|\le\Delta_d \max_{\epsilon\in \{-1,1\}}\|\bff_{B_0} + \epsilon \bff_{B\setminus B_0}\|  \le \Delta_d \K_u\|\bff_B\|.
\end{align*}
\ref{dem15}\ref{demsup}. Fix finite sets $A, B$ with $|A|\le |B|$, $\varepsilon\in\Psi_A$ and $\varepsilon'\in \Psi_B$. If $\|\bff_{\bfe A}\|\le (n_1-1) \alpha_1 $, then 
then \begin{equation*}
	 \|\bff_{\bfe A}\|\le (n_1-1)\alpha_1\alpha_2 \|\bff_{\bfe' B}\|.
	\end{equation*}
Otherwise, by Lemma~\ref{lemmakey}\ref{previous} there is $A_0\subset A$ with $|A_0|\in \n$ such that 
$$
\|\bff_{\bfe A}\|\le l  \|\bff_{\bfe A_0}\|.
$$
Choose $B_0\subset B$ with $|B_0|=|A_0|$. We have
\begin{align*}
\|\bff_{\bfe A}\|\le& l  \|\bff_{\bfe A_0}\|\le l \K_u\|\bff_{A_0}\|\le l \K_u\Delta_d \|\bff_{B_0}\|\le l \K_u\Delta_d\max_{\epsilon\in \{-1.1\}}\|\bff_{B_0}+\bff_{B\setminus B_0}\|\le l\K_u^2\Delta_d\|\bff_{\bfe' B}\|.
\end{align*}
Similarly, if $\|\bff_{\bfe A}\|> 2 \kappa (n_1-1) \alpha_1 $, by Lemma~\ref{lemmakey}\ref{new} there is $A_0\subset A$ with $|A_0|\in \n$ such that
$$
\|\bff_{\bfe A}\|\le 2 \kappa l  \|\bff_{ A_0}\|.
$$
Thus, choosing $B_0$ as above we obtain 
$$
\|\bff_{\bfe A}\|\le 2\kappa l \|\bff_{A_0}\|\le 2\kappa l \K_u\Delta_d \|\bff_{\bfe' B}\|.
$$
\ref{superdem15}. This is proved in the same manner as \ref{dem15}\ref{demdem}.
\end{proof}

For general bases, we have the following result. 
\begin{proposition}\label{propositionsuperdemboundedgaps}Let $\n$ be a sequence with gaps. Then:
\begin{enumerate}[i)]
\item \label{arbitrarilylargensupdemo} If $\n$ has arbitrarily large additive gaps, there is a Banach space $\X$ with a Markushevich basis $\B$ that is $\n$-superdemocratic but not democratic. 
\item \label{boundednsupdem} If $\n$ has $l$-bounded additive gaps and $\B$ is $\Delta_{sd}$-$\n$-superdemocratic, it is $\C$-superdemocratic, with 
$$\C\le  \max\{n_1\alpha_1\alpha_2,\Delta_{sd}(1+l\alpha_1\alpha_2)+l\alpha_1\alpha_2\}.$$ 
\item \label{boundedndem} If $\n$ has $l$-bounded additive gaps and $\B$ is $\Delta_{d}$-$\n$-democratic, it is $\C$-democratic, with 
$$\C\le  \max\{(n_1-1)\alpha_1\alpha_2,\Delta_{d}(1+l\alpha_1\alpha_2)+l\alpha_1\alpha_2\}.$$ 
\end{enumerate}
\end{proposition}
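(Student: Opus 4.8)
The plan is to treat the three parts by the methods already established in the paper. Part~\ref{arbitrarilylargensupdemo} is a pure existence statement: I would defer it to the family of examples built later in the paper (the additive-gap version of the construction used for Lemma~\ref{lemmdemaarbitrarilylargegaps}), exactly as Proposition~\ref{propositionboundedgapsnULnoUl} does. So the proof line for \ref{arbitrarilylargensupdemo} should simply read ``See Example~\ref{examplendemocracylargelineargaps}'' (and the relevant remark), mirroring the bookkeeping already present. The real work is in \ref{boundednsupdem} and \ref{boundedndem}, and both follow the template of Proposition~\ref{propositionucccboundedgaps}\ref{boundedlineargapsucc}: split off a block whose cardinality lies in $\n$, control the leftover block crudely using semi-normalization, and use the triangle inequality.

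For \ref{boundednsupdem}: fix finite sets $A,B$ with $|A|\le |B|$ and signs $\bfe\in\Psi_A$, $\bfe'\in\Psi_B$. If $|A|<n_1$ then $\|\bff_{\bfe A}\|\le |A|\alpha_1\le n_1\alpha_1\le n_1\alpha_1\alpha_2\|\bff_{\bfe' B}\|$, which is the first term in the max. (A symmetric easy bound handles the case $|B|<n_1$, forcing $|A|<n_1$ as well.) Otherwise set $k_0:=\max_{k}\{n_k<|A|\}$ (or $|A|\in\n$, in which case $\n$-superdemocracy applies directly), choose $A_1\subset A$ with $|A_1|=n_{k_0}$ and note $|A\setminus A_1|=|A|-n_{k_0}\le l$ by the $l$-bounded additive gap hypothesis. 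Likewise pick $B_1\subset B$ with $|B_1|=n_{k_0}$; since $|B_1|\le|B|$ we also have $|B\setminus B_1|\le|B|$. Then
\begin{align*}
\|\bff_{\bfe A}\|\le \|\bff_{\bfe A_1}\|+\|\bff_{\bfe (A\setminus A_1)}\|
\le \Delta_{sd}\|\bff_{\bfe' B_1}\|+l\alpha_1
\le \Delta_{sd}\|\bff_{\bfe' B}\|+\Delta_{sd}\|\bff_{\bfe'(B\setminus B_1)}\|+l\alpha_1\alpha_2\|\bff_{\bfe' B}\|,
\end{align*}
where the first inequality in the middle uses $\n$-superdemocracy (both $|A_1|=|B_1|=n_{k_0}\in\n$) plus $\|\bff_{\bfe(A\setminus A_1)}\|\le l\alpha_1$, and in the last step I bound $\|\bff_{\bfe'(B\setminus B_1)}\|\le |B\setminus B_1|\alpha_1$ and then compare with $\|\bff_{\bfe' B}\|\ge \|\bff_{B}'\|/\alpha_2\cdot(\text{trivial lower bound})$ — more cleanly, use that $\|\bff_{\bfe'(B\setminus B_1)}\|$ is itself $\le\alpha_2\|\bff_{\bfe' B}\|$ is \emph{not} automatic, so instead apply $\n$-superdemocracy again to $B\setminus B_1$ against $B$ only when $|B\setminus B_1|\in\n$; the robust route, as in Proposition~\ref{propositionucccboundedgaps}, is to write $\|\bff_{\bfe'(B\setminus B_1)}\|\le \K_u$-type crude bound — but here the paper's own estimate in \ref{boundedlineargapsucc} bounds $\|\bff_{\bfe'A\setminus A_1}\|\le \K_u\|\bff_{\bfe' A}\|$ using that $A\setminus A_1$ is a subset; since we do not assume unconditionality, I should instead just keep $\|\bff_{\bfe'(B\setminus B_1)}\|$ inside $\Delta_{sd}\|\bff_{B_1}\|\le\ldots$ loop once more. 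Collecting constants yields $\C\le\max\{n_1\alpha_1\alpha_2,\ \Delta_{sd}(1+l\alpha_1\alpha_2)+l\alpha_1\alpha_2\}$.

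For \ref{boundedndem} the argument is identical with all signs $\bfe,\bfe'$ set to $1$: the only change is that the very first case gives $\|\bff_A\|\le|A|\alpha_1\le(n_1-1)\alpha_1\le(n_1-1)\alpha_1\alpha_2\|\bff_B\|$ (since $|A|\le n_1-1$ strictly when $|A|\notin\n$ and $|A|<n_1$), producing the stated constant $(n_1-1)\alpha_1\alpha_2$ instead of $n_1\alpha_1\alpha_2$, and $\n$-democracy replaces $\n$-superdemocracy throughout. \textbf{The main obstacle} I anticipate is exactly the handling of the leftover piece $\bff_{\bfe'(B\setminus B_1)}$ on the right-hand side: without unconditionality one cannot bound it by $\|\bff_{\bfe' B}\|$ for free, so one must either re-invoke $\n$-(super)democracy after padding $B\setminus B_1$ up to a cardinality in $\n$ (possible since $|B|\ge n_1$ guarantees enough room, using a greedy-type padding inside $B$ of size $n_{k_0}$ and the inequality $\|\bff_{\bfe' B_0}\|\le\ldots\le\|\bff_{\bfe' B}\|$ only if $\B$ were Schauder — which it is not here), or, more simply, absorb it by applying $\n$-democracy to the pair $(B\setminus B_1,\ B_1)$ when $|B\setminus B_1|\le n_{k_0}$ and otherwise iterate; tracking this carefully is what pins down the factor $(1+l\alpha_1\alpha_2)$ in the final constant. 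I would write the leftover-block estimate as a short self-contained sublemma to keep the bookkeeping clean, then feed it into the triangle inequality exactly as above.
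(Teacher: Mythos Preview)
Your outline for part~\ref{arbitrarilylargensupdemo} matches the paper exactly (defer to Example~\ref{examplendemocracylargelineargaps}), and your overall scheme for \ref{boundednsupdem} and \ref{boundedndem} --- split off a piece of cardinality in~$\n$, control the leftover by semi-normalization --- is also the paper's scheme. But there is a real gap in how you handle the right-hand side, and it is precisely the ``main obstacle'' you flag without resolving.

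The problem is your choice of $B_1$. You take $|B_1|=n_{k_0}$, the same cardinality as $|A_1|$. Then $|B\setminus B_1|=|B|-n_{k_0}$ can be arbitrarily large (nothing forces $|B|$ to be close to $n_{k_0}$), and none of the maneuvers you list --- padding, iterating, invoking $\n$-democracy on $(B\setminus B_1,B_1)$ --- will recover the stated constant. The paper's fix is one line: choose $k_1\ge k_0$ with $n_{k_1}\le |B|\le n_{k_1+1}$ and take $B_1\subset B$ with $|B_1|=n_{k_1}$. Then $|B\setminus B_1|\le l$ by the $l$-bounded additive gap hypothesis applied to $|B|$, so $\|\bff_{\bfe'(B\setminus B_1)}\|\le l\alpha_1\le l\alpha_1\alpha_2\|\bff_{\bfe' B}\|$ exactly as for the $A$-side leftover. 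The $\n$-superdemocracy step $\|\bff_{\bfe A_1}\|\le \Delta_{sd}\|\bff_{\bfe' B_1}\|$ still goes through because $|A_1|=n_{k_0}\le n_{k_1}=|B_1|$ and both cardinalities lie in~$\n$ (Definition~\ref{definitionsuperdemo} allows $|A|\le|B|$). With this single change your chain of inequalities closes cleanly:
\[
\|\bff_{\bfe A}\|\le \Delta_{sd}\|\bff_{\bfe' B_1}\|+l\alpha_1\alpha_2\|\bff_{\bfe' B}\|\le \Delta_{sd}\bigl(\|\bff_{\bfe' B}\|+\|\bff_{\bfe'(B\setminus B_1)}\|\bigr)+l\alpha_1\alpha_2\|\bff_{\bfe' B}\|,
\]
and the leftover term is now small. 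The same correction applies verbatim to \ref{boundedndem}.
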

\begin{proof}
\ref{arbitrarilylargensupdemo}. See Example~\ref{examplendemocracylargelineargaps}.\\
\ref{boundednsupdem}. Choose $A,B, \varepsilon, \varepsilon'$ as in Definition~\ref{definitionsuperdemo}. If $|A|\le n_1-1$, we have
$$
\|\bff_{\bfe A}\|\le \alpha_1\alpha_2 (n_1-1)\|\bff_{\bfe' B}\|.
$$
Otherwise, there are $k_0\in \N$ such that $n_{k_0}\le |A|\le n_{k_{0}+1}$ and $k_1\ge k_0$ such that $n_{k_1}\le |B|\le n_{k_{1}+1}$. Choose $A_1\subset A$ and $B_1\subset B$ with $|A_1|=n_{k_0}$ and $|B_1|=n_{k_1}$. We have 
\begin{eqnarray*}
\|\bff_{\bfe A}\|&\le& \|\bff_{\bfe A_1}\|+\|\bff_{\bfe A\setminus A_1}\|\le \Delta_{sd}\|\bff_{\bfe' B_1}\|+l\alpha_1\alpha_2 \|\bff_{\bfe' B}\|\\
&\le& \Delta_{sd}\|\bff_{\bfe' B}\|+\Delta_{sd}\|\bff_{\bfe' B\setminus B_1}\|+l\alpha_1\alpha_2 \|\bff_{\bfe' B}\|\le (\Delta_{sd}(1+l\alpha_1\alpha_2)+l\alpha_1\alpha_2) \|\bff_{\bfe' B}\|.
\end{eqnarray*}

\ref{boundedndem} is proven in the same way as \ref{boundednsupdem}. 
\end{proof}

Next, we consider extensions of two other properties: conservativeness and superconservativeness (see \cite{BBL} and \cite{DKKT}).

\begin{defi}\label{definitionsupercon}
	We say that a basis $\mathcal{B}$ is $\n$-superconservative if there exists a positive constant $\C$ such that 
	\begin{eqnarray}\label{cons}
	\Vert \mathbf{1}_{\varepsilon A}\Vert \leq \C\Vert \mathbf{1}_{\varepsilon' B}\Vert,
	\end{eqnarray}
	for all $A,B\subset\N$ with $\vert A\vert\leq \vert B\vert$, $\vert A\vert,\vert B\vert\in\n$, $A<B$, and $\varepsilon\in\Psi_A, \varepsilon'\in\Psi_B$. The smallest constant verifying \eqref{demo} is denoted by $\Delta_{sc}$ and we say that $\mathcal B$ is $\Delta_{sc}$-$\n$-superconservative.
	
	If \eqref{cons} is satisfied for $\varepsilon\equiv\varepsilon'\equiv 1$, we say that $\mathcal B$ is $\Delta_c$-$\n$-conservative, where $\Delta_c$ is the smallest constant for which the inequality holds. 
	
	For $n=\N$, we say that $\B$ is $\Delta_{sc}$-superconservative and $\Delta_c$-conservative.
\end{defi}

The extensions of these two properties to the context of sequences with gaps behave like the extensions of democracy and superdemocracy, in the sense shown in the following results, counterparts of the ones proven above. 

\begin{lemma}\label{lemmanconcon}Suppose $\n$ has $l$-bounded quotient gaps, and let $\B$ be a  Schauder basis with basis constant $\K$. Then: 
\begin{itemize}
\item If $\B$ is $\Delta_{c}$-$\mathbf{n}$-conservative, it is conservative with  constant no greater than $\max\{(n_1-1)\alpha_1\alpha_2, l \Delta_c\K\}$.
\item If $\B$ is $\Delta_{sc}$-$\mathbf{n}$ superconservative, it is superconservative with constant no greater than $\max\{(n_1-1)\alpha_1\alpha_2, l\Delta_{sc}\K.\}$.
\end{itemize}
\end{lemma}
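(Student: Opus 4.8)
The plan is to mirror, essentially verbatim, the argument used for democracy and superdemocracy in Proposition~\ref{propositionndemdem}, since the only structural change is that all index sets are now required to satisfy $A<B$, and the constructions used in that proof already respect this constraint. Concretely, to prove conservativeness: fix finite sets $A,B$ with $|A|\le|B|$ and $A<B$, and a choice of signs (trivial in the conservative case, where $\bfe\equiv\bfe'\equiv 1$). If $\|\bff_A\|\le(n_1-1)\alpha_1$, then $\|\bff_A\|\le(n_1-1)\alpha_1\alpha_2\|\bff_B\|$ and we are done, exactly as in \eqref{lessthann1demsupdem}. Otherwise, apply Lemma~\ref{lemmakey}\ref{previous} with $x_j=\be_j$ for $j\in A$: since the first alternative would give $\max_{E\subset A}\|\bff_E\|\le(n_1-1)\alpha_1$, we are in the second alternative, so there is $A_0\subset A$ with $|A_0|\in\n$ and $\|\bff_A\|\le l\|\bff_{A_0}\|$.

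The key point ensuring the argument goes through with the $A<B$ restriction is the choice of $B_0$: take $B_0$ to be the set of the \emph{last} $|A_0|$ elements of $B$ that are the first $n_{k_0}$ elements sitting above $A$, or more simply just the first $n_{k_0}=|A_0|$ elements of $B$. Since $A_0\subseteq A<B$, automatically $A_0<B_0$, so the $\n$-superconservativity (resp.\ $\n$-conservativity) hypothesis applies to the pair $(A_0,B_0)$, yielding $\|\bff_{A_0}\|\le\Delta_c\|\bff_{B_0}\|$. Then use the Schauder property to absorb the tail: $\|\bff_{B_0}\|\le\K\|\bff_B\|$ because $B_0$ consists of the first $|B_0|$ elements of $B$, so $P_{B_0}$ restricted to $\Cspan{\be_j:j\in B}$ is a partial-sum-type operator bounded by $\K$. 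Chaining gives $\|\bff_A\|\le l\Delta_c\K\|\bff_B\|$, which combined with the small-cardinality case yields the stated bound $\max\{(n_1-1)\alpha_1\alpha_2, l\Delta_c\K\}$.

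For superconservativeness the proof is identical after attaching signs: fix $A<B$ with $|A|\le|B|$, $\bfe\in\Psi_A$, $\bfe'\in\Psi_B$. If $\|\bff_{\bfe A}\|\le(n_1-1)\alpha_1$ the small-cardinality estimate applies; otherwise Lemma~\ref{lemmakey}\ref{previous} applied to $(x_j)_{j\in A}=(\e_j\be_j)_{j\in A}$ produces $A_0\subset A$ with $|A_0|\in\n$ and $\|\bff_{\bfe A}\|\le l\|\bff_{\bfe A_0}\|$; picking $B_0$ as the first $|A_0|$ elements of $B$ gives $A_0<B_0$, so $\n$-superconservativity yields $\|\bff_{\bfe A_0}\|\le\Delta_{sc}\|\bff_{\bfe' B_0}\|$, and the Schauder bound applied to $\sum_{j\in B}\e'_j\be_j$ gives $\|\bff_{\bfe' B_0}\|\le\K\|\bff_{\bfe' B}\|$. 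Hence $\|\bff_{\bfe A}\|\le l\Delta_{sc}\K\|\bff_{\bfe' B}\|$.

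I do not expect any real obstacle here: the whole point of having isolated Lemma~\ref{lemmakey} and of the proof of Proposition~\ref{propositionndemdem} is precisely to make this kind of statement a routine corollary. The one thing to be careful about is the ordering bookkeeping — making sure that the witness set $B_0$ we extract is an \emph{initial} segment of $B$ (so that the Schauder constant controls $\|\bff_{B_0}\|$ in terms of $\|\bff_B\|$) and simultaneously lies entirely above $A_0$ (so that the $\n$-conservativity hypothesis is applicable). Both hold automatically from $A<B$ together with $A_0\subseteq A$, so there is nothing delicate, merely a remark that this is the step where the hypothesis $A<B$ is used, exactly as in the democratic case the hypothesis was only $|A|\le|B|$.
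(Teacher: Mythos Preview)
Your proof is correct and follows exactly the approach the paper intends: the paper's own proof simply reads ``This is proved in the same manner as Proposition~\ref{propositionndemdem},'' and your argument reproduces that proof while noting the one additional bookkeeping point (that $A_0\subset A<B$ guarantees $A_0<B_0$, so the $\n$-(super)conservativity hypothesis applies). Nothing further is needed.
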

\begin{proof}
This is proved in the same manner as Proposition~\ref{propositionndemdem}.
\end{proof}

\begin{lemma}\label{lemmconsarbitrarilylargegaps}Let $\n$ be a sequence with arbitrarily large quotient gaps. There is a Banach space $\X$ with a Schauder basis $\B$ that is $\n$-superconservative but not conservative. 
\end{lemma}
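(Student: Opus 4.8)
The plan is to obtain the statement from the construction of Section~\ref{sectionexamples}, in the same way Lemma~\ref{lemmdemaarbitrarilylargegaps} is obtained. Given $\n$ with arbitrarily large quotient gaps, Example~\ref{examplendemocratic} provides a Banach space $\X$ with a Schauder basis $\B$ that is $\Delta_s$-$\n$-superdemocratic. Comparing Definition~\ref{definitionsupercon} with Definition~\ref{definitionsuperdemo}, $\n$-superconservativeness is exactly $\n$-superdemocracy with the comparison restricted to pairs $A<B$; hence $\B$ is automatically $\Delta_s$-$\n$-superconservative. So everything reduces to checking that $\B$ is \emph{not} conservative --- which, since non-conservativeness formally implies non-democracy, is slightly more than what Lemma~\ref{lemmdemaarbitrarilylargegaps} asserts, and is the only point requiring an argument beyond what is already recorded.

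To see that $\B$ is not conservative I would inspect the sets that witness the failure of democracy in Example~\ref{examplendemocratic}. That failure is engineered from an arbitrarily large quotient gap $n_k<m<n_{k+1}$: one picks a set $A$ of cardinality $m$ on which $\|\bff_A\|$ is large, and a set $B$ with $|B|=n_{k+1}\ge m$ and $|B|\in\n$ on which $\|\bff_B\|$ stays comparably small, so that $\|\bff_A\|/\|\bff_B\|\to\infty$ as $k$ runs over the indices where $n_{k+1}/n_k\to\infty$. The additional thing to observe is that these witnesses may be taken with $A<B$: in the example the norms $\|\bff_A\|$, $\|\bff_B\|$ of the relevant indicator sums are insensitive to support-order-preserving right shifts of the blocks involved, so $B$ can be slid to the right of $A$ without changing either norm. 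Then $A<B$, $|A|\le|B|$ and $\|\bff_A\|/\|\bff_B\|$ unbounded say precisely that $\B$ is not conservative (hence also not democratic).

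The substance of the argument therefore lies entirely in Section~\ref{sectionexamples}: one must build $\X$ so that $\B$ is a Schauder basis with a \emph{uniformly bounded} basis constant and at the same time $\|\bff_A\|$ can be large for some $A$ of cardinality strictly between two consecutive terms of $\n$ while $\|\bff_B\|$ remains small for $B$ of cardinality in $\n$. This is the main obstacle, and it forces the norm of indicator sums to depend on the \emph{support pattern} and not merely on the cardinality, since a cardinality-only norm that dipped on $\n$ would violate the partial-sum bounds defining the Schauder condition. Granting that construction, the deduction of the present lemma is the short reasoning above.
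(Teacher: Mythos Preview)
Your approach is the same as the paper's --- refer to Example~\ref{examplendemocratic} --- and the reduction ``$\n$-superdemocratic $\Rightarrow$ $\n$-superconservative'' is correct. But there is a genuine gap in your handling of non-conservativeness.

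You propose to obtain $A<B$ by taking witnesses of non-democracy and ``sliding $B$ to the right of $A$'', asserting that the norms of the indicator sums in Example~\ref{examplendemocratic} are insensitive to order-preserving right shifts. This is false: the norm there is
\[
\Bigl\|\sum_j a_j\be_j\Bigr\|=\max\Bigl\{\|(a_j)\|_2,\ \sup_i \tfrac{c_i}{\sqrt{m_i}}\max_{1\le l\le m_i}\Bigl|\sum_{j=\tilde m_i+1}^{\tilde m_i+l}(-1)^{\theta(j)}a_j\Bigr|\Bigr\},
\]
and the second term depends on the exact location of the support within the blocks $[\tilde m_i+1,\tilde m_i+m_i]$ and on the sign pattern $\theta$. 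Shifting a set off its block collapses that term to zero and leaves only the $\ell_2$ part, destroying the large ratio you need.

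No shift argument is required, though: item~\ref{fex1} of Example~\ref{examplendemocratic} already proves non-conservativeness directly. The sets $B_i=\{\tilde m_i+1,\dots,\tilde m_i+\beta_i\}$ and $D_i=\{\tilde m_i+\beta_i+1,\dots,\tilde m_i+2\beta_i\}$ satisfy $B_i<D_i$ and $|B_i|=|D_i|$ by construction, and the design of $\theta$ (constant parity on the first half of each block, alternating on the second) forces $\|\bff_{B_i}\|\ge c_i\sqrt{m_i}/3$ while $\|\bff_{D_i}\|\le\sqrt{m_i}$. So the ordered witnesses are built in; your task is only to cite them.
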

\begin{proof}
See Example~\ref{examplendemocratic}.
\end{proof}

\begin{lemma}\label{lemmaconcon2S} Suppose $\mathbf{n}$ is a sequence with $l$-bounded quotient gaps, and $\B$ is a basis that is $\K_u$-unconditional for constant coefficients. Then: 
\begin{enumerate}[i)]
\item \label{con15} If $\B$ is $\Delta_c$-$\mathbf{n}$-conservative, then 
\begin{enumerate}[1)]
\item \label{concon}$\B$ is $\C$-conservative with 
$$
\C\le \max\{(n_1-1)\alpha_1\alpha_2, l\K_u  \Delta_c \}
$$
\item \label{consup}$\B$ is $\mathbf{M}$-superconservative with 
$$
\mathbf{M}\le \max\{(n_1-1)\alpha_1\alpha_2, l\K_u^2  \Delta_c \}
$$
\end{enumerate}
\item \label{supercon15} If $\B$ is $\Delta_{sc}$-$\mathbf{n}$-superconservative, it is $\C$-superconservative with 
$$
\C\le \max\{(n_1-1)\alpha_1\alpha_2, l\K_u  \Delta_{sc} \}.
$$
\end{enumerate}
\end{lemma}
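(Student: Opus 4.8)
The plan is to mimic, essentially verbatim, the proof of Lemma~\ref{lemmademdem2}; the one new point is that the order requirement $A<B$ from Definition~\ref{definitionsupercon} has to survive the argument, which it does for free. Fix finite sets $A<B$ with $|A|\le|B|$ and sign vectors $\bfe\in\Psi_A$, $\bfe'\in\Psi_B$ (with $\bfe\equiv\bfe'\equiv\one$ when proving the conservativeness statement \ref{concon} and part \ref{con15}). If $\|\bff_{\bfe A}\|\le(n_1-1)\alpha_1$, then since $\|\bff_{\bfe' B}\|\ge\alpha_2^{-1}$ (evaluate a biorthogonal functional) we get $\|\bff_{\bfe A}\|\le(n_1-1)\alpha_1\alpha_2\|\bff_{\bfe' B}\|$; this accounts for the first entry in each maximum, so from now on we assume $\|\bff_{\bfe A}\|>(n_1-1)\alpha_1$.

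Apply Lemma~\ref{lemmakey}\ref{previous} to the family $(x_j)_{j\in A}:=(\e_j\be_j)_{j\in A}$. Since $\max_{E\subset A}\|\sum_{j\in E}x_j\|\ge\|\bff_{\bfe A}\|>(n_1-1)\alpha_1\ge(n_1-1)\max_{j\in A}\|x_j\|$, the first alternative of that lemma is impossible, so there is $A_0\subset A$ with $|A_0|\in\n$ and $\|\bff_{\bfe A}\|\le l\|\bff_{\bfe A_0}\|$. This is the only place where the conservative case differs from the democratic one of Lemma~\ref{lemmademdem2}: since $A_0\subset A$ we automatically have $A_0<B$, so letting $B_0$ be the set of the first $|A_0|$ elements of $B$ we obtain $|B_0|=|A_0|\in\n$, $|B_0|\le|B|$ and $A_0<B_0$ — precisely the configuration needed to apply $\n$-(super)conservativeness to the pair $(A_0,B_0)$.

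It remains to chain the estimates, using the usual convexity device: a subset sum such as $\bff_{\bfe' B_0}$ is one half of a sum of two vectors of the form $\bff_{\bfeta B}$ with $\bfeta\in\Psi_B$, so $\K_u$-unconditionality for constant coefficients gives $\|\bff_{\bfe' B_0}\|\le\K_u\|\bff_{\bfe' B}\|$ (and likewise $\|\bff_{B_0}\|\le\K_u\|\bff_B\|$). For part \ref{concon}: $\|\bff_A\|\le l\|\bff_{A_0}\|\le l\Delta_c\|\bff_{B_0}\|\le l\K_u\Delta_c\|\bff_B\|$. For part \ref{consup}, first remove the signs from $A_0$: $\|\bff_{\bfe A}\|\le l\|\bff_{\bfe A_0}\|\le l\K_u\|\bff_{A_0}\|\le l\K_u\Delta_c\|\bff_{B_0}\|\le l\K_u^2\Delta_c\|\bff_{\bfe' B}\|$. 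For part \ref{supercon15}, apply $\n$-superconservativeness directly to $(A_0,B_0)$: $\|\bff_{\bfe A}\|\le l\|\bff_{\bfe A_0}\|\le l\Delta_{sc}\|\bff_{\bfe' B_0}\|\le l\K_u\Delta_{sc}\|\bff_{\bfe' B}\|$. Combining each chain with the easy case yields the stated constants; note that, unlike in Lemma~\ref{lemmademdem2}\ref{demsup}, we never need Lemma~\ref{lemmakey}\ref{new} here.

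There is no serious obstacle. The two details that need attention are: checking that $(A_0,B_0)$ is an admissible pair for the $\n$-(super)conservativeness hypothesis, i.e.\ that $A_0<B_0$, which is immediate once $B_0$ is taken as an initial segment of $B$; and the bookkeeping of the two factors of $\K_u$ in part \ref{consup} — one spent stripping the signs off $A_0$, one spent passing from the subset sum on $B_0$ to the full sum on $B$ in a single convexity step — so that the constant comes out $l\K_u^2\Delta_c$ and not larger.
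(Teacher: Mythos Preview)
Your proof is correct and follows exactly the approach the paper intends: it says only that ``this Lemma is proved by the same arguments as Lemma~\ref{lemmademdem2}, with only straightforward modifications,'' and your write-up is precisely that transcription, including the key observation that $A_0\subset A<B$ guarantees $A_0<B_0$ so the $\n$-(super)conservativeness hypothesis applies to the pair $(A_0,B_0)$. Your remark that Lemma~\ref{lemmakey}\ref{new} is not needed here is also consistent with the statement, which records only the $l\K_u^2\Delta_c$ bound in part~\ref{consup} and not the additional $2\kappa$-type bound present in Lemma~\ref{lemmademdem2}\ref{demsup}.
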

\begin{proof}
This Lemma is proved by the same arguments as Lemma~\ref{lemmademdem2}, with only straightforward modifications. 
\end{proof}

\begin{lemma}\label{lemmasuperconsvoundedgaps}Let $\n$ be a sequence with gaps. Then 
\begin{enumerate}[i)]
\item If $\n$ has arbitrarily large additive gaps, there is a Banach space $\X$ with a Markushevich basis $\B$ that is $\n$-superconservative but not conservative. 
\item  If $\n$ has $l$-bounded additive gaps and $\B$ is $\Delta{sc}$-$\n$-superconservative, it is $\C$-superconservative, with 
$$\C\le  \max\{(n_1-1)\alpha_1\alpha_2,\Delta_{sc}(1+l\alpha_1\alpha_2)+l\alpha_1\alpha_2\}.$$ 
\item  If $\n$ has $l$-bounded additive gaps and $\B$ is $\Delta{c}$-$\n$-conservative, it is $\C$-conservative, with 
$$\C\le  \max\{(n_1-1)\alpha_1\alpha_2,\Delta_{c}(1+l\alpha_1\alpha_2)+l\alpha_1\alpha_2\}.$$ 
\end{enumerate}
\end{lemma}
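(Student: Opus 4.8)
The plan is to dispose of part (i) by invoking the constructions of Section~\ref{sectionexamples} --- concretely, the Markushevich basis of Example~\ref{examplendemocracylargelineargaps}, which is $\n$-superdemocratic (hence $\n$-superconservative) but not conservative --- exactly as in Proposition~\ref{propositionsuperdemboundedgaps}\ref{arbitrarilylargensupdemo}. Parts (ii) and (iii) should then be obtained by repeating, almost verbatim, the argument of Proposition~\ref{propositionsuperdemboundedgaps}\ref{boundednsupdem}--\ref{boundedndem}, the one new ingredient being the bookkeeping needed to keep the relation $A<B$ intact when passing to subsets.

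For (ii) I would argue as follows. Fix finite sets $A<B$ with $|A|\le|B|$ and signs $\bfe\in\Psi_A$, $\bfe'\in\Psi_B$ (both sets may be assumed nonempty). If $|A|\le n_1-1$, then, since $\|\bff_{\bfe' B}\|\ge\alpha_2^{-1}$, one gets directly
\[
\|\bff_{\bfe A}\|\le |A|\,\alpha_1\le(n_1-1)\alpha_1\alpha_2\,\|\bff_{\bfe' B}\|.
\]
Otherwise, pick $k_0$ with $n_{k_0}\le|A|\le n_{k_0+1}$ and $k_1\ge k_0$ with $n_{k_1}\le|B|\le n_{k_1+1}$, and choose $A_1\subset A$, $B_1\subset B$ with $|A_1|=n_{k_0}$ and $|B_1|=n_{k_1}$. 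Since $A<B$ we have $A_1<B_1$, and $|A_1|=n_{k_0}\le n_{k_1}=|B_1|$ with both cardinalities in $\n$, so the $\n$-superconservativity hypothesis applies and gives $\|\bff_{\bfe A_1}\|\le\Delta_{sc}\|\bff_{\bfe' B_1}\|$. The $l$-bounded additive gap assumption forces $|A\setminus A_1|=|A|-n_{k_0}\le n_{k_0+1}-n_{k_0}\le l$ and likewise $|B\setminus B_1|\le l$; hence $\|\bff_{\bfe(A\setminus A_1)}\|\le l\alpha_1\le l\alpha_1\alpha_2\|\bff_{\bfe' B}\|$ and, from $\bff_{\bfe' B_1}=\bff_{\bfe' B}-\bff_{\bfe'(B\setminus B_1)}$, $\|\bff_{\bfe' B_1}\|\le(1+l\alpha_1\alpha_2)\|\bff_{\bfe' B}\|$. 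Assembling these by the triangle inequality,
\[
\|\bff_{\bfe A}\|\le\|\bff_{\bfe A_1}\|+\|\bff_{\bfe(A\setminus A_1)}\|\le\bigl(\Delta_{sc}(1+l\alpha_1\alpha_2)+l\alpha_1\alpha_2\bigr)\|\bff_{\bfe' B}\|,
\]
which, together with the small-cardinality estimate, yields the stated constant.

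Part (iii) is then the specialization $\bfe\equiv\bfe'\equiv1$ of precisely the same computation, with $\Delta_c$ replacing $\Delta_{sc}$. I do not expect a genuine obstacle here: the only content beyond Proposition~\ref{propositionsuperdemboundedgaps} is the observation that subsets of $A$ and $B$ inherit both the ordering $A_1<B_1$ and the cardinality comparison $|A_1|\le|B_1|$ (the latter via $k_0\le k_1$), so that the hypothesis of $\n$-(super)conservativity --- which carries the extra $A<B$ restriction --- is legitimately applicable to the extracted pair. Once that is in place, the remaining norm estimates are routine and identical to those of the (super)democratic case.
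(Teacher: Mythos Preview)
Your proposal is correct and matches the paper's own approach: the paper's proof is simply ``This is proved in the same manner as Proposition~\ref{propositionsuperdemboundedgaps},'' and you have spelled out exactly that argument, including the one additional observation needed in the conservative setting --- that passing to subsets $A_1\subset A$, $B_1\subset B$ preserves both $A_1<B_1$ and $|A_1|\le|B_1|$.
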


\begin{proof}
This is proved in the same manner as Proposition~\ref{propositionsuperdemboundedgaps}. 
\end{proof}

\section{$\n$-symmetry and $\n$-quasi-greediness for largest coefficients}\label{sectionnsymmetry}

In this section, we extend to the context of sequences with gaps the notions of quasi-greediness for largest coefficients and symmetry for largest coefficients. We also study an extension of suppression unconditionality for constant coefficients. We begin with the first of these properties, introduced in \cite{AABW}.

\begin{definition}\label{definitionnQGlc} We say that $\mathcal B$ is $\mathbf{n}$-quasi-greedy for largest coefficients if there exists a positive constant $\C$ such that
	\begin{eqnarray}\label{ql}
	\|\bff_{\bfe A}\|\le \C\|\bff_{\bfe A}+x\|
	\end{eqnarray}
	for every $A\subset \N$ with $|A|\in \mathbf{n}$, $\bfe \in \Psi_A$, and all $x\in \mathbb{X}$ such that $\supp{(x)}\cap A=\emptyset$ and $|\be_i^*(x)|\le 1$ for all $i\in \N$. The smallest constant verifying \eqref{ql} is denoted by $\C_{ql}$ and we say that $\B$ is $\C_{ql}$-$\n$-quasi-greedy for largest coefficients. When $\mathbf{n}=\N$, $\B$ is $\C_{ql}$-quasi-greedy for largest coefficients.  
\end{definition}
It is immediate that if $\B$ is $\C_{q,t}$-$t$-$\mathbf{n}$-quasi-greedy, it is also $\C_{ql}$-$\mathbf{n}$-quasi-greedy for largest coefficients with $\C_{ql}\leq \C_{q,t}$.\\
 Note that it is enough to take $x$ a finite linear combination of some of the $\be_j$'s in Definition~\ref{definitionnQGlc}. More precisely, we have the following elementary characterization. 

\begin{lemma}\label{equiv0}
	A basis $\mathcal B$ is $\n$-quasi-greedy for largest coefficients if and only if there exists a positive constant $\mathbf{L}$ such that
	\begin{eqnarray}\label{qglc1}
	\Vert \mathbf{1}_{\varepsilon A}\Vert \le  \mathbf{L}\Vert x+\mathbf{1}_{\varepsilon A}\Vert,
	\end{eqnarray}
	for every $A\subset \N$ with $|A|\in \mathbf{n}$, $\bfe \in \Psi_A$, and all $x\in [\be_j: j\in \N]$ such that $\supp{(x)}\cap A=\emptyset$ and $|\be_j^*(x)|\le 1$ for all $j\in \N$. Moreover, if \eqref{qglc1} holds, then $\C_{ql}\le \mathbf{L}$. 
\end{lemma}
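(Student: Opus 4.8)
The plan is to prove the two implications of Lemma~\ref{equiv0}. One direction is essentially a restriction of Definition~\ref{definitionnQGlc}; the other requires a density/continuity argument to pass from finitely supported $x$ to arbitrary $x\in\X$. The main obstacle is handling the fact that an arbitrary $x$ with $\|x\|_\infty\le 1$ (where $\|x\|_\infty$ denotes the sup norm of its coefficient sequence) may not be approximable in $\X$ by finitely supported vectors whose coefficients still satisfy the bound $|\be_j^*(\cdot)|\le 1$ \emph{and} whose support stays disjoint from $A$; I will need to truncate using the partial sum operators together with a small perturbation on $A$.

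First I would observe that \eqref{qglc1} is a special case of \eqref{ql}: if $\B$ is $\C_{ql}$-$\n$-quasi-greedy for largest coefficients, then taking $\mathbf{f}_{\bfe A}=\one_{\bfe A}$ and any $x\in[\be_j:j\in\N]$ with $\supp(x)\cap A=\emptyset$ and $|\be_j^*(x)|\le1$, inequality \eqref{ql} gives exactly \eqref{qglc1} with $\mathbf{L}=\C_{ql}$; in particular such an $\mathbf{L}$ exists and $\C_{ql}\ge$ the smallest such $\mathbf{L}$, which is what the ``moreover'' clause needs in one direction.

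For the converse, suppose \eqref{qglc1} holds with constant $\mathbf{L}$. Fix $A$ with $|A|\in\n$, $\bfe\in\Psi_A$, and $x\in\X$ with $\supp(x)\cap A=\emptyset$ and $|\be_i^*(x)|\le1$ for all $i$. I want $\|\one_{\bfe A}\|\le\mathbf{L}\|x+\one_{\bfe A}\|$. The idea is to write $y=x+\one_{\bfe A}$ and approximate $y$ by $S_N(y)$ in the partial-sum sense — but $\B$ need only be a basis, not Schauder, so $S_N(y)$ need not converge. Instead I would argue directly on the linear functional side: pick $x^*\in\X^*$ with $\|x^*\|=1$ and $x^*(\one_{\bfe A})=\|\one_{\bfe A}\|$, and for each finite $F\supset A$ consider $x_F:=\sum_{j\in F\setminus A}\be_j^*(x)\be_j$, which is finitely supported, disjoint from $A$, and has $\|x_F\|_\infty\le1$. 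Then \eqref{qglc1} gives $\|\one_{\bfe A}\|\le\mathbf{L}\|x_F+\one_{\bfe A}\|$. The point is now to choose $F$ so that $x_F+\one_{\bfe A}$ is close in norm to $x+\one_{\bfe A}$; this is where I would invoke that $x\in\X=\overline{[\be_i:i\in\N]}$, so $x$ is a norm limit of finitely supported vectors $u_n$, and then replace $u_n$ by its ``honest'' truncation matching $x$ on a large finite set — using the biorthogonal functionals and the fact that convergence $u_n\to x$ forces $\be_j^*(u_n)\to\be_j^*(x)$, so the coefficient-bound $|\be_j^*(x_{F})|\le1$ can be preserved (possibly at the cost of an $\e$-perturbation, absorbed in the limit). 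Passing to the limit in $\|\one_{\bfe A}\|\le\mathbf{L}\|x_F+\one_{\bfe A}\|$ yields $\|\one_{\bfe A}\|\le\mathbf{L}\|x+\one_{\bfe A}\|$, i.e.\ \eqref{ql} with $\mathbf{f}_{\bfe A}=\one_{\bfe A}$.

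Finally, to obtain \eqref{ql} in full generality (arbitrary $\mathbf{f}_{\bfe A}$, not just $\one_{\bfe A}$), I note that $\mathbf{f}_{\bfe A}=\one_{\bfe A}$ \emph{is} the general form: Definition~\ref{definitionnQGlc} already writes $\bff_{\bfe A}=\sum_{n\in A}\e_n\be_n$ with $|\e_n|=1$, which is precisely $\one_{\bfe A}$ in the notation of the paper. So the displayed inequalities \eqref{ql} and \eqref{qglc1} have the same left-hand side, and the argument above is complete; tracking constants, we get $\C_{ql}\le\mathbf{L}$, and combined with the first paragraph the two quantities coincide. The one technical point to be careful about — and the step I expect to cost the most — is the truncation: ensuring the finitely supported approximants of $x$ can be taken with $\ell_\infty$-coefficient norm $\le1$ and support disjoint from $A$ simultaneously, which I would handle by first projecting off the coordinates in $A$ (harmless since $\supp(x)\cap A=\emptyset$ means those coordinates are already $0$) and then clipping each coefficient's modulus to $\min\{|\be_j^*(u_n)|,1\}$ while checking this clipping operation is continuous and converges to $x$.
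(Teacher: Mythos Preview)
Your overall strategy matches the paper's: approximate $x$ by finitely supported $x_k\in[\be_j:j\in\N]$, replace by $y_k:=x_k-P_A(x_k)$ to force the support disjoint from $A$ (harmless since $P_A(x)=0$ and $A$ is finite, so $y_k\to x$), adjust so that $\|y_k\|_\infty\le 1$, apply \eqref{qglc1}, and pass to the limit. The detours through a norming functional $x^*$ and the projections $x_F=P_{F\setminus A}(x)$ are unnecessary---you rightly note the latter need not converge since $\B$ is not assumed Schauder, and the functional plays no role once you switch to general approximants.

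The one substantive gap is the adjustment step. You propose coordinate-wise clipping and say you would ``check this clipping operation is continuous and converges to $x$.'' That check is not routine: clipping is $1$-Lipschitz coordinate by coordinate, but for a conditional basis this does not transfer to a norm estimate, and the number of clipped coordinates can grow with $k$, so $\|\mathrm{clip}(y_k)-x\|\to 0$ is not obvious. The paper sidesteps the issue with a global rescaling rather than a coordinate-wise one: since $y_k\to x$ and $\|\cdot\|_\infty\le \alpha_2\|\cdot\|$, one has $\|y_k\|_\infty\to\|x\|_\infty$; if $\|x\|_\infty<1$ then $\|y_k\|_\infty\le 1$ eventually and no adjustment is needed, while if $\|x\|_\infty=1$ one sets $z_k:=\|y_k\|_\infty^{-1}y_k$ (for $y_k\ne 0$), which satisfies $\|z_k\|_\infty\le 1$ and still $z_k\to x$ because the scalar factor tends to $1$. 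Replacing your clipping by this two-case rescaling closes the gap immediately.
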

\begin{proof}
Clearly we only need to show that if \eqref{qglc1} holds, then it also holds for $x\in \X\setminus [\be_j: j\in \N]$, that is for $x$ which is not a finite linear combination of some of the $\be_j$'s. Given such $x$, there is a sequence $(x_k)_{k \in \N}\subset [\be_j: j\in \N]$ such that 
$$
x_k\xrightarrow[k\to \infty]{} x. 
$$
For each $k$, let $y_k:=x_k-P_A(x_k)$. Since $\be_j^*(x)=0$ for all $j\in A$ and $A$ is finite, we have $$
y_k\xrightarrow[k\to \infty]{} x,$$ so $$\|y_k\|_{\infty}\xrightarrow[k\to \infty]{}\|x\|_{\infty}.$$ Hence, if $\|x\|_{\infty}<1$, there is $k_0\in \N$ such that $\|y_k\|_{\infty}\le 1$ for all $k\ge k_0$, so 
\begin{align*}
\|\bff_{\bfe A}\|\le&  \mathbf{L} \|\bff_{\bfe A}+y_{k+k_0}\|\xrightarrow[k\to \infty]{} \mathbf{L} \|\bff_{\bfe A}+x\|.
\end{align*}
On the other hand, if $\|x\|_{\infty}=1$, define 
\begin{equation}z_k:=
\begin{cases}
\|y_k\|_{\infty}^{-1}y_k & \text{ if } y_k\not=0;\\
0 & \text{ otherwise}.  
\end{cases}
\end{equation}
Since 
$$
z_k\xrightarrow[k\to \infty]{} x
$$
and $\|z_k\|_{\infty}\le 1$ for all $k\in \N$, the proof is completed by the same argument used in the case $\|x\|_{\infty}<1$.
\end{proof}

If a basis is quasi-greedy for largest coefficients, it is unconditional for constant coefficients (this follows for example from the proof of \cite[Proposition 3]{Wo}, or from \cite[Remark 3.4]{BBG}). Hence, Example~\ref{examplendemocratic} shows that, for $\n$ with arbitrarily large quotient gaps, $\n$-quasi-greediness for largest coefficients is not equivalent to its regular counterpart. On the other hand, the following proposition shows that equivalence holds in the remaining cases.

\begin{proposition}\label{propositioncglcbqg} Suppose $\mathbf{n}$ has $l$-bounded quotient gaps, and $\B$ is $\n$-$\C_{ql}$-quasi-greedy for largest coefficients. Then $\B$ is $\C$-quasi-greedy for largest coefficients with 
$$
\C\le \max\{(n_1-1)\alpha_1\alpha_2, l\C_{ql} \}
$$
\end{proposition}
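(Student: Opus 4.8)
The plan is to follow the same template used for the democracy-like results, most directly Proposition~\ref{propositionndemdem}, invoking Lemma~\ref{lemmakey}\ref{previous} to reduce an arbitrary constant-coefficient block to one whose cardinality lies in $\n$. First I would fix $A\subset\N$ finite, $\bfe\in\Psi_A$, and $x\in\X$ with $\supp(x)\cap A=\emptyset$ and $|\be_j^*(x)|\le 1$ for all $j$; by Lemma~\ref{equiv0} we may assume $x$ is a finite linear combination of the $\be_j$'s, which lets us apply Lemma~\ref{lemmakey} to the family $(x_j)_{j\in A}$ given by $x_j:=\e_j\be_j$. If $|A|\in\n$ there is nothing to prove, so assume $|A|\notin\n$.

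The dichotomy from Lemma~\ref{lemmakey}\ref{previous} splits into two cases. In the first case, $\max_{E\subset A}\|\bff_{\bfe E}\|\le (n_1-1)\max_{j\in A}\|\be_j\|\le (n_1-1)\alpha_1$, and since $\|\bff_{\bfe A}+x\|\ge \|\bff_{\bfe A}\|_\infty^{\text{-coordinate}}$ bound — more precisely, applying $\alpha_2^{-1}\be_i^*$ to $\bff_{\bfe A}+x$ for any fixed $i\in A$ gives $\|\bff_{\bfe A}+x\|\ge \alpha_2^{-1}|\be_i^*(\bff_{\bfe A}+x)|=\alpha_2^{-1}$ (using $A\cap\supp(x)=\emptyset$). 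Hence $\|\bff_{\bfe A}\|\le (n_1-1)\alpha_1\le (n_1-1)\alpha_1\alpha_2\|\bff_{\bfe A}+x\|$. In the second case, there is $B\subset A$ with $|B|\in\n$ and $\|\bff_{\bfe A}\|\le l\|\bff_{\bfe B}\|$; then $\supp(x)\cap B=\emptyset$ and $|B|\in\n$, so the hypothesis that $\B$ is $\C_{ql}$-$\n$-quasi-greedy for largest coefficients applies directly to the pair $(\bff_{\bfe B},x)$, yielding $\|\bff_{\bfe B}\|\le \C_{ql}\|\bff_{\bfe B}+x\|$. The remaining task is to replace $\|\bff_{\bfe B}+x\|$ by $\|\bff_{\bfe A}+x\|$: since $B\subset A$ and $\bff_{\bfe A}=\bff_{\bfe B}+\bff_{\bfe(A\setminus B)}$, and $\supp(\bff_{\bfe(A\setminus B)})\subset A$ is disjoint from $\supp(x)$, we can write $\bff_{\bfe B}+x=(\bff_{\bfe A}+x)-\bff_{\bfe(A\setminus B)}$, and note $\bff_{\bfe(A\setminus B)}=P_{A\setminus B}(\bff_{\bfe A}+x)$. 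This is exactly where the $\n$-quasi-greedy-for-largest-coefficients property is used a second time, applied to $\bff_{\bfe(A\setminus B)}$... but that block may not have cardinality in $\n$. Instead the cleaner route is: apply the defining inequality \eqref{ql} (or Lemma~\ref{lemmakey}\ref{previous} once more, or a direct projection estimate) — actually the simplest is to observe $\|\bff_{\bfe B}+x\|\le \|\bff_{\bfe A}+x\|+\|\bff_{\bfe(A\setminus B)}\|$ does not obviously close, so instead I would bound it the other way around by noting that $\bff_{\bfe B}+x$ is obtained from $\bff_{\bfe A}+x$ by removing the coordinates in $A\setminus B$, and use that removing coordinates from $A$ (where the coefficients all have modulus $1$, the largest present) is controlled: apply \eqref{ql} with the roles reconsidered, giving $\|\bff_{\bfe(A\setminus B)}+(\bff_{\bfe B}+x)\|\ge \C_{ql}^{-1}$-type control is not what we want either. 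The correct and standard move: $\|\bff_{\bfe B}+x\| = \|(\bff_{\bfe A}+x) - P_{A\setminus B}(\bff_{\bfe A}+x)\|$, and by $\n$-quasi-greediness for largest coefficients applied appropriately (this is how the analogous step works in \cite{AABW}), one gets $\|\bff_{\bfe B}+x\|\le \C_{ql}\|\bff_{\bfe A}+x\|$ — but to stay safe I would simply argue $\|\bff_{\bfe B}+x\|\le \|\bff_{\bfe A}+x\|+\|\bff_{\bfe(A\setminus B)}\|$ and then bound $\|\bff_{\bfe(A\setminus B)}\|$ using Lemma~\ref{lemmakey}\ref{previous} together with \eqref{ql} on a subset of $A\setminus B$ of cardinality in $\n$. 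Combining, $\|\bff_{\bfe A}\|\le l\C_{ql}\|\bff_{\bfe A}+x\|$, and together with the first-case bound $(n_1-1)\alpha_1\alpha_2$ we obtain the stated constant $\max\{(n_1-1)\alpha_1\alpha_2,\ l\C_{ql}\}$.

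The main obstacle is precisely this last replacement of $\|\bff_{\bfe B}+x\|$ by $\|\bff_{\bfe A}+x\|$: one must pass from a greedy-type inequality on the sub-block $B$ to one on the full block $A$ without losing control on the "tail" $\bff_{\bfe(A\setminus B)}$, whose cardinality need not belong to $\n$. I expect this is handled by a short extra application of the dichotomy in Lemma~\ref{lemmakey}\ref{previous} to $A\setminus B$ (or directly to $A$, choosing $B$ to contain all of $A\setminus B$ up to cardinality $n_{k_0}$), exactly mirroring the bookkeeping in the proofs of Propositions~\ref{propositionucccboundedgaps} and~\ref{propositionndemdem}; no genuinely new idea is needed beyond careful constant tracking. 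The rest — the small-cardinality case and the triangle-inequality combination — is routine.
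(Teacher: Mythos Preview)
You have correctly identified the overall strategy (invoke Lemma~\ref{lemmakey}\ref{previous} to find $B\subset A$ with $|B|\in\n$ and $\|\bff_{\bfe A}\|\le l\|\bff_{\bfe B}\|$), and the small-cardinality case is fine. But the step you flag as ``the main obstacle'' --- passing from $\|\bff_{\bfe B}+x\|$ to $\|\bff_{\bfe A}+x\|$ --- is a genuine gap in your write-up, and the workarounds you sketch either fail or produce a strictly worse constant than the claimed $l\C_{ql}$.

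The missing observation is that this step is \emph{not needed at all}. In Definition~\ref{definitionnQGlc}, the ``rest'' need only have support disjoint from the index set and coordinates of modulus at most $1$. Since $\bff_{\bfe(A\setminus B)}$ has support disjoint from $B$ and all coefficients of modulus exactly $1$, the vector $y:=\bff_{\bfe(A\setminus B)}+x$ is admissible in \eqref{ql} for the set $B$. Applying the $\n$-quasi-greedy-for-largest-coefficients hypothesis directly with this $y$ gives
\[
\|\bff_{\bfe A}\|\le l\|\bff_{\bfe B}\|\le l\C_{ql}\|\bff_{\bfe B}+\bff_{\bfe(A\setminus B)}+x\| = l\C_{ql}\|\bff_{\bfe A}+x\|,
\]
and the proof is complete in one line. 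Your detour through $\|\bff_{\bfe B}+x\|$ (applying \eqref{ql} with rest $x$ alone) throws away exactly the freedom in the definition that makes the argument close with the right constant.
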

\begin{proof}
Fix a finite set $A\subset \N$ with $0<|A|\not\in \mathbf{n}$, and $x$, $\bfe \in \Psi_A$ as in Definition~\ref{definitionnQGlc}. If $\|\bff_{\bfe A}\|\le (n_1-1)\alpha_1$, then 
\begin{align*}
\|\bff_{\bfe A}\|\le (n_1-1)\alpha_1\alpha_2 \|\bff_{\bfe A}+x\|. 
\end{align*}
Otherwise, by Lemma~\ref{lemmakey}, there is $B\subset A$ with $|B|\in \n$ such that $\|\bff_{\bfe A}\|\le l \|\bff_{\bfe B}\|$. Hence, 
$$
\|\bff_{\bfe A}\|\le l \|\bff_{\bfe B}\|\le l \C_{ql} \|\bff_{\bfe B}+\bff_{\bfe A\setminus B}+x\|=  l \C_{ql}\|\bff_{\bfe A}+x\|, 
$$
and the proof is complete. 
\end{proof}
Next, we consider an extension of suppression unconditionality for constant coefficients, a property studied in \cite{ AABW,BBL,BBG}, among others.  This property is equivalent to unconditionality for constant coefficients (see \cite[Remark 3.4]{BBG}) but, as we shall see, their extensions to our context behave differently and are not in general equivalent.

\begin{definition}\label{snuccc} We say that $\B$ is $\n$-suppression unconditional for constant coefficients if there is $\C>0$ such that 
\begin{eqnarray*}
	\|\bff_{\bfe A}\|\le \C\|\bff_{\bfe' B}\|
\end{eqnarray*}
	for all $A\subset B\subset \N$ with $|A|\in\n$ and all $\bfe' \in \Psi_B$. The smallest constant verifying the above inequality is denoted by $\mathbf{K}_{su}$ and we say that $\mathcal B$ is $\K_{suc}$-$\n$-suppression unconditional for constant coefficients. If $\n=\mathbb N$, we say that $\mathcal B$ is $\mathbf{K}_{su}$-suppression unconditional for constant coefficients.
\end{definition}
It is immediate from the definition that if $\B$ is $\C_{ql}$-$\n$-quasi-greedy for largest coefficients, it is $\K_{suc}$-$\n$-suppression unconditional for constant coefficients with $\K_{suc}\le \C_{ql}$. \\
Unlike $\n$-unconditionality for constant coefficients  (see Propositions~\ref{propositionucccboundedgaps} and~\ref{propositionboundedgapsnULnoUl}), for sequences with bounded quotient gaps $\n$-suppression unconditionality for constant coefficients is equivalent to its regular counterpart. 

\begin{proposition}\label{propositionnsuccsucc}Suppose $\n$ has $l$-bounded quotient gaps, and  $\B$ is $\K_{suc}$-$\n$-suppression unconditional for constant coefficients.  Then $\B$ is $\C$-suppression unconditional for constant coefficients, with 
$$
\C\le \max\{(n_1-1)\alpha_1\alpha_2, l \K_{suc} \}.
$$
\end{proposition}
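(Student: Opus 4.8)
The plan is to follow the template already used for Propositions~\ref{propositionndemdem} and~\ref{propositioncglcbqg}: given an arbitrary finite index set, either it is ``small'' (roughly, the vector $\bff_{\bfe A}$ has norm at most $(n_1-1)\alpha_1$) and handled by hand, or Lemma~\ref{lemmakey}\ref{previous} lets us pass to a subset whose cardinality lies in $\n$, after which the $\n$-hypothesis finishes the job. What makes this work cleanly for \emph{suppression} unconditionality for constant coefficients is that the subset produced by Lemma~\ref{lemmakey} is a subset of the original set (hence of $B$) and inherits its signs, so the hypothesis can be applied with no sign changes and no loss beyond the factor $l$.

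First I would fix nonempty finite sets $A\subset B\subset\N$ and $\bfe'\in\Psi_B$, writing $\bfe$ for the restriction of $\bfe'$ to $A$, so that $\bff_{\bfe A}=\bff_{\bfe' A}$. If $|A|\in\n$ the hypothesis gives the bound at once (with constant $\K_{suc}\le l\K_{suc}$), so I may assume $|A|\notin\n$. I would also record the elementary lower bound $\|\bff_{\bfe' B}\|\ge\alpha_2^{-1}$, obtained by applying $\be_i^*$ for any $i\in B$.

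Then I would split on the size of $\|\bff_{\bfe A}\|$. If $\|\bff_{\bfe A}\|\le(n_1-1)\alpha_1$, then the lower bound on $\|\bff_{\bfe' B}\|$ yields $\|\bff_{\bfe A}\|\le(n_1-1)\alpha_1\alpha_2\|\bff_{\bfe' B}\|$. Otherwise, I would apply Lemma~\ref{lemmakey}\ref{previous} to the vectors $x_j:=\e_j\be_j$, $j\in A$, for which $\max_{j\in A}\|x_j\|\le\alpha_1$. The first alternative of that lemma would give $\|\bff_{\bfe A}\|\le(n_1-1)\alpha_1$, contradicting the case assumption; hence the second alternative holds and produces $A_0\subset A$ with $|A_0|\in\n$ and $\|\bff_{\bfe A}\|\le l\|\bff_{\bfe A_0}\|$. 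Since $A_0\subset B$, $|A_0|\in\n$, and $\bfe$ restricted to $A_0$ equals $\bfe'$ restricted to $A_0$, the $\n$-suppression unconditionality for constant coefficients gives $\|\bff_{\bfe A_0}\|\le\K_{suc}\|\bff_{\bfe' B}\|$, so $\|\bff_{\bfe A}\|\le l\K_{suc}\|\bff_{\bfe' B}\|$. Combining the two cases gives $\C\le\max\{(n_1-1)\alpha_1\alpha_2,\,l\K_{suc}\}$.

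I do not anticipate a genuine obstacle; the statement is close to a corollary of Lemma~\ref{lemmakey}. The only spots needing a bit of care are verifying that the set $A_0$ furnished by the lemma is eligible for the $\n$-hypothesis relative to $B$ (it is, precisely because $A_0\subset A\subset B$ and $|A_0|\in\n$), and making sure no auxiliary sign flips creep in, so that the bound remains $l\K_{suc}$ rather than something like $l\K_{suc}^2$.
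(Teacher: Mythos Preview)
Your proposal is correct and follows essentially the same route as the paper. The paper's proof simply says to run the argument of Proposition~\ref{propositioncglcbqg} with $x=\bff_{\bfe' E}$ (i.e., $E=B\setminus A$), which amounts precisely to your two-case split via Lemma~\ref{lemmakey}\ref{previous} and the observation that the subset $A_0\subset A\subset B$ inherits the signs from $\bfe'$.
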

\begin{proof}
This is proven by a simpler variant of the argument of Proposition~\ref{propositioncglcbqg}, taking $x=\bff_{\bfe' E}$ for some finite set $E\subset\N$ and $\varepsilon'\in \Psi_E$. 
\end{proof}

Finally, we extend the property of being symmetric for largest coefficients to the context of sequences with gaps. This property was introduced in \cite{AW} (as Property (A)) and studied in \cite{BOB, BBG, BDKOW,DKOSZ, BBG}.

\begin{defi}\label{definitionnslq}
	We say that $\mathcal{B}$ is $\n$-symmetric for largest coefficients if there exists a positive constant $\C$ such that 
	\begin{eqnarray}\label{sy}
	\Vert x+\mathbf{1}_{\varepsilon A}\Vert \leq \C\Vert x+\mathbf{1}_{\varepsilon' B}\Vert,
	\end{eqnarray}
	for any pair of sets $A,B$ with $\vert A\vert\leq \vert B\vert$, $A\cap B=\emptyset$, $\vert A\vert,\vert B\vert\in\n$, for any $\varepsilon\in\Psi_A, \varepsilon'\in\Psi_B$ and for any $x\in\X$ such that $\vert\be_i^*(x)\vert\leq 1\, \forall i\in\mathbb N$ and $\supp(x)\cap (A\cup B)=\emptyset$. The smallest constant verifying \eqref{sy} is denoted by $\Delta$ and we say that $\mathcal B$ is $\Delta$-$\n$-symmetric for largest coefficients. If $\n=\N$, we say that $\mathcal B$ is $\Delta$-symmetric for largest coefficients.
\end{defi}

Note that our definition is equivalent to only requiring that $|A|= |B|\in \n$ instead of $|A|\leq|B|\in\n$, and $x\in [\be_j: j\in \N]$. The following lemma proves these facts.

\begin{lemma}\label{equiv}
	A basis $\mathcal B$ is $\n$-symmetric for largest coefficients if and only if there exists a positive constant $\mathbf{L}$ such that
	\begin{eqnarray}\label{sy1}
	\Vert x+\mathbf{1}_{\varepsilon A}\Vert \le  \mathbf{L}\Vert x+\mathbf{1}_{\varepsilon' B}\Vert,
	\end{eqnarray}
	for any pair of sets $A,B$ with $\vert A\vert=  \vert B\vert$, $A\cap B=\emptyset$, $\vert B\vert\in\n$, for any $\varepsilon\in\Psi_A, \varepsilon'\in\Psi_B$ and for any $x\in [\be_j: j\in \N]$ such that $\vert\be_i^*(x)\vert\leq 1\, \forall i\in\mathbb N$ and $\supp(x)\cap (A\cup B)=\emptyset$. Moreover, $\Delta$ is the minimum $\mathbf{L}$ for which \eqref{sy1} holds.
\end{lemma}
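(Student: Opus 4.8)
The ``only if'' direction is immediate, since \eqref{sy1} is precisely the special case of \eqref{sy} in which $|A|=|B|\in\n$ and the perturbation is finitely supported; thus, if $\B$ is $\Delta$-$\n$-symmetric for largest coefficients, then \eqref{sy1} holds with $\mathbf{L}=\Delta$. The substance of the lemma is the converse together with the optimality of the constant, so the plan is: assuming \eqref{sy1} holds with constant $\mathbf{L}$, show that \eqref{sy} holds for every $A,B,x$ as in Definition~\ref{definitionnslq} with $\C=\mathbf{L}$; this forces $\Delta\le\mathbf{L}$, and hence $\Delta$ is the least admissible $\mathbf{L}$. I would carry this out in two reduction steps.

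\emph{Step 1 (removing the restriction $x\in[\be_j:j\in\N]$).} Here I would argue exactly as in the proof of Lemma~\ref{equiv0}. Given $x\in\X$ with $|\be_i^*(x)|\le1$ for all $i$ and $\supp(x)\cap(A\cup B)=\emptyset$ (with $|A|=|B|\in\n$ for now), choose finitely supported $x_k\to x$, put $y_k:=x_k-P_{A\cup B}(x_k)$, so that $y_k\to x$ because $A\cup B$ is finite and $\be_i^*(x)=0$ on it; if $\|x\|_\infty=1$ replace $y_k$ by $\|y_k\|_\infty^{-1}y_k$ where it is nonzero and by $0$ otherwise, obtaining finitely supported $z_k\to x$ with $\|z_k\|_\infty\le1$ and $\supp(z_k)\cap(A\cup B)=\emptyset$. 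Since $u\mapsto\|u+\one_{\bfe A}\|$ and $u\mapsto\|u+\one_{\bfe' B}\|$ are continuous, applying \eqref{sy1} to the $z_k$ and letting $k\to\infty$ yields $\|x+\one_{\bfe A}\|\le\mathbf{L}\|x+\one_{\bfe' B}\|$. So \eqref{sy1} holds for all $x\in\X$, still with $|A|=|B|\in\n$.

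\emph{Step 2 (from $|A|=|B|$ to $|A|\le|B|$) and the main obstacle.} Let $A,B,x$ be as in Definition~\ref{definitionnslq}; if $|A|=|B|$ we are done by Step~1, so suppose $|A|<|B|$ and write $B=B_1\cupdot B_2$ with $|B_1|=|A|\in\n$. Applying \eqref{sy1} (now valid for $x\in\X$) with perturbation $x$ and sets $A,B_1$ gives
$$
\|x+\one_{\bfe A}\|\le\mathbf{L}\,\|x+\one_{\bfe' B_1}\|,
$$
so it remains to prove the ``suppression for largest coefficients with perturbation'' estimate $\|x+\one_{\bfe' B_1}\|\le\mathbf{L}\,\|x+\one_{\bfe' B}\|$ whenever $B_1\subseteq B$ with $|B_1|,|B|\in\n$ and $x$ has $\|x\|_\infty\le1$ and support disjoint from $B$; equivalently, writing $v:=\one_{\bfe'B_2}$, the estimate $\|x+\one_{\bfe' B_1}\|\le\mathbf{L}\,\|(x+v)+\one_{\bfe' B_1}\|$. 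The route I would take is to swap only blocks whose cardinality lies in $\n$: since $|B_1\cup B_2|=|B|\in\n$, one rewrites the right-hand side as $\|x+\one_{\widetilde\bfe(B_1\cup B_2)}\|$, transports the block $B_1\cup B_2$ to a fresh disjoint set of the same admissible cardinality via \eqref{sy1}, then transports a sub-block of cardinality $|B_1|$ back and compares it with $\one_{\bfe' B_1}$ by a final application of \eqref{sy1}. The delicate point — and what I expect to be the main obstacle — is to organize the auxiliary sets so that every cardinality used stays in $\n$ and so that the accumulated constant does not degrade beyond $\mathbf{L}$; a convenient expedient is to first extract from the $x=0$ instance of \eqref{sy1} (and the convexity observation of Remark~\ref{remarkalsosuperdemocracy}) a restricted $\n$-superdemocracy, hence $\n$-unconditionality for constant coefficients, statement, and use it to handle the pure-indicator parts. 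Combining Steps 1 and 2 gives \eqref{sy} with $\C=\mathbf{L}$, hence $\Delta\le\mathbf{L}$, which completes the proof.
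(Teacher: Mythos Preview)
Your Step~1 is correct and matches the paper's argument (which simply defers to Lemma~\ref{equiv0}). The problem is in Step~2.

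Your plan splits $B=B_1\cupdot B_2$ with $|B_1|=|A|\in\n$, applies \eqref{sy1} once to get $\|x+\one_{\bfe A}\|\le\mathbf{L}\|x+\one_{\bfe' B_1}\|$, and then proposes a second estimate $\|x+\one_{\bfe' B_1}\|\le\mathbf{L}\|x+\one_{\bfe' B}\|$. Even granting the second inequality, chaining the two already gives $\|x+\one_{\bfe A}\|\le\mathbf{L}^2\|x+\one_{\bfe' B}\|$, not $\mathbf{L}$; so the ``Moreover'' clause ($\Delta$ equals the minimum admissible $\mathbf{L}$) fails. You recognise the danger (``so that the accumulated constant does not degrade beyond $\mathbf{L}$''), but the route you describe --- transporting $B$ to a fresh set and then a sub-block back --- uses \eqref{sy1} at least twice more and gives no mechanism to avoid the loss. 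Moreover, the intermediate ``suppression with perturbation'' estimate you isolate is not a consequence of \eqref{sy1} by any obvious averaging: writing $\|x+\one_{\bfe' B_1}\|\le\tfrac12(\|x+\one_{\bfe' B}\|+\|x+\one_{\eta B}\|)$ leads to comparing $\one_{\eta B}$ with $\one_{\bfe' B}$ on the \emph{same} set $B$, which \eqref{sy1} (requiring $A\cap B=\emptyset$) does not allow.

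The paper's device is to enlarge $A$ rather than shrink $B$. Choose a fresh set $C>\supp(x)\cup A\cup B$ with $|A|+|C|=|B|\in\n$; then
\[
\|x+\one_{\bfe A}\|\le\tfrac12\bigl(\|x+\one_{\bfe A}+\one_C\|+\|x+\one_{\bfe A}-\one_C\|\bigr),
\]
and each term on the right is $\|x+\one_{\eta(A\cup C)}\|$ with $|A\cup C|=|B|\in\n$ and $(A\cup C)\cap B=\emptyset$, so a \emph{single} application of \eqref{sy1} bounds each by $\mathbf{L}\|x+\one_{\bfe' B}\|$. This yields $\Delta\le\mathbf{L}$ with no loss of constant, which is exactly what the ``Moreover'' clause requires.
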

\begin{proof}
	Of course, we only have to show that \eqref{sy1} implies $\n$-symmetry for largest coefficients with constant no greater than $\mathbf{L}$. Let $x, A, B, \varepsilon, \varepsilon'$ be as in Definition~\ref{definitionnslq}, with the additional condition that $x\in [\be_j: j\in \N]$. If $|A|=|B|\in \mathbf{n}$, there is nothing to prove. 
	 Else, choose a set $C>\supp(x)\cup A\cup B$ such that $\vert A\vert+\vert C\vert=\vert B\vert\in\n$. We have
	$$\Vert x+\mathbf{1}_{\varepsilon A}\Vert \leq \frac{1}{2}\left(\Vert x+\mathbf{1}_{\varepsilon A}+\mathbf 1_C\Vert+\Vert x+\mathbf{1}_{\varepsilon A}-\mathbf{1}_C\Vert  \right)\leq \mathbf{L}\Vert x+\mathbf{1}_{\varepsilon' B}\Vert. $$
To prove the result for $x\not \in [\be_j: j\in \N]$, apply the argument of Lemma~\ref{equiv0}. 
	\end{proof}
	
\begin{remark}\rm Note that for Markushevich bases, $x\in [\be_j:j\in \N]$ if and only if $x$ has finite support, so for such bases Lemmas~\ref{equiv0} and~\ref{equiv} can be proved using \cite[Lemma 3.2]{BDKOW} (a result that can also be extended to bases that are not total, with only a slight modification of the proof). 
\end{remark}

Next, we study the relation between $\n$-symmetry for largest coefficients and $\n$-superdemocracy. 

\begin{lemma}\label{rem1}\rm
Let $\B$ be a basis. If $\B$ is $\Delta$-$\n$-symmetric for largest coefficients, it is $\Delta_s$-$\n$-superdemocratic with $\Delta_s\leq \Delta^2$.
\end{lemma}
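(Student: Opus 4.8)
The plan is to deduce $\n$-superdemocracy from $\n$-symmetry for largest coefficients by applying the symmetry inequality \eqref{sy} twice, with $x=0$ as the interpolating element. First I would fix sets $A,B$ with $|A|=|B|\in\n$ (which suffices by Remark~\ref{remarkalsosuperdemocracy}) and sign sequences $\bfe\in\Psi_A$, $\bfe'\in\Psi_B$. To apply the definition of $\n$-symmetry I need the two index sets to be disjoint, so the key trick is to introduce an auxiliary set $C$ with $|C|=|A|=|B|\in\n$ chosen so that $C$ is disjoint from both $A$ and $B$ (e.g. $C>A\cup B$), together with an arbitrary sign choice $\bfe''\in\Psi_C$.

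The main computation is then a two-step comparison: using $x=0$ in \eqref{sy} (note $\supp(0)\cap(A\cup C)=\emptyset$ trivially and the coefficient condition is vacuous), we get
$$
\|\bff_{\bfe A}\|=\|0+\one_{\bfe A}\|\le \Delta\,\|0+\one_{\bfe'' C}\|=\Delta\,\|\bff_{\bfe'' C}\|,
$$
and applying \eqref{sy} a second time with the roles suitably arranged (comparing $C$ against $B$, again with $x=0$ and $C\cap B=\emptyset$),
$$
\|\bff_{\bfe'' C}\|\le \Delta\,\|\bff_{\bfe' B}\|.
$$
Composing these two bounds yields $\|\bff_{\bfe A}\|\le \Delta^2\|\bff_{\bfe' B}\|$, which is precisely $\n$-superdemocracy with constant $\Delta_s\le\Delta^2$ in the form of Remark~\ref{remarkalsosuperdemocracy}, hence in general.

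I do not anticipate a serious obstacle here; the only point requiring a little care is the disjointness hypothesis built into Definition~\ref{definitionnslq}, which forces the introduction of the intermediate set $C$ rather than comparing $A$ and $B$ directly (they may overlap). One should also make sure $C$ can be chosen with cardinality in $\n$ and disjoint from $A\cup B$ — this is immediate since $\N$ is infinite and $|A|\in\n$. Everything else is a direct substitution into the already-established inequality \eqref{sy}, so the proof is short.
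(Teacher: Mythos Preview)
Your proposal is correct and follows essentially the same route as the paper: both introduce an auxiliary set $C>A\cup B$ of the appropriate cardinality and apply the $\n$-symmetry inequality twice with $x=0$, passing through $\bff_{C}$ (or $\bff_{\bfe'' C}$) to obtain the factor $\Delta^2$. The only cosmetic difference is that the paper works directly with $|A|\le |B|$ and $|C|=|A|$, whereas you first reduce to $|A|=|B|$ via Remark~\ref{remarkalsosuperdemocracy}; this changes nothing of substance.
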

\begin{proof}
Consider two sets $A, B$ with cardinality in $\n$ and $\vert A\vert\leq\vert B\vert$, and a set $C> A\cup B$ such that $\vert C\vert=\vert A\vert$. Then,
	\begin{eqnarray}\label{sym}
	\frac{\Vert\mathbf{1}_{\varepsilon A}\Vert}{\Vert\mathbf{1}_{\varepsilon' B}\Vert}=\frac{\Vert\mathbf{1}_{\varepsilon A}\Vert}{\Vert\mathbf{1}_{C}\Vert}\frac{\Vert\mathbf{1}_{C}\Vert}{\Vert\mathbf{1}_{\varepsilon' B}\Vert}\leq \Delta^2.
	\end{eqnarray}
\end{proof}

In the case $\n=\mathbb N$, it is known that if $\mathcal B$ is $\Delta$-symmetric for largest coefficients, then it is $\Delta_s$-superdemocratic with $\Delta_s \leq 2\kappa\Delta$ (\cite[Proposition 1.1]{BBG}). In Lemma~\ref{rem1}, for a general sequence $\n$, we have shown that if $\B$ is $\Delta$-$\n$-symmetric for largest coefficients, it is $\Delta_s$-$\n$-superdemocratic with $\Delta_s\leq\Delta^2$.  This suggests the question of  whether the latter estimate can be improved in the sense that $\Delta_s\lesssim \Delta$. Our next result shows that this is not possible. In fact, it is not even possible to obtain $\Delta_s \lesssim \Delta^p$ for any $1<p<2$.

\begin{proposition}\label{propositionnslcnsdem} Let $0<\delta<1$ and $M>1$. There is a sequence $\n$ and a Banach space $\X$ with a Schauder basis $\B$ that is $\Delta$-$\n$-symmetric for largest coefficients and $\Delta_{s}$-$\n$-superdemocratic 
	with  
	\begin{equation}
	\Delta>M\qquad \text{and}\qquad\Delta_{s}\ge \Delta^{2-\delta}.\nonumber
	\end{equation}
\end{proposition}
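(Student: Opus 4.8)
The plan is to construct an explicit weighted $c_0$- or $\ell_1$-type example, tuned so that symmetry for largest coefficients holds with a good constant $\Delta$ while superdemocracy is forced to be almost as bad as $\Delta^2$. The natural candidate is a space whose basis consists of blocks of increasing size with cleverly chosen weights: take a block of $n_{k_0}$ vectors each of norm roughly $\Delta$ and norm of their sum roughly $1$ (so that inside a block the basis is very ``flat''), while between blocks there is a sharp jump. Concretely, I would fix $N$ large depending on $M$ and $\delta$, set $\n$ to be a sequence with one huge quotient gap around $N$ (say $n_k = N$ and $n_{k+1} = N^2$, with $\n$ otherwise equal to $\N$ outside a window), and build $\X$ as an $\ell_1$-sum of two pieces: one piece of dimension $N$ on which $\|\one_A\| \approx |A|^{1/p}$ for a suitable $p = p(\delta)$, and one piece that is isometrically $\ell_\infty^N$ or a Schauder-admissible variant thereof. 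The point of mixing an ``$\ell_p$'' scaling with an ``$\ell_\infty$'' scaling is that a single-sign sum $\one_A$ of size $|A| \le N$ has norm $\approx |A|^{1/p}$ (from the first piece, which dominates), but the ratio $\|\one_{\e A}\|/\|\one_{\e' B}\|$ with $|A|=|B|$ and arbitrary signs can be made as small as $1$ (real signs cancel in the $\ell_\infty$ component), so that the superdemocracy constant is $\approx N^{1/p}$, whereas the symmetry constant, measured against a \emph{common} background vector $x$ as in Definition~\ref{definitionnslq}, only sees the ratio after one ``sandwiching'' and comes out $\approx N^{1/(2p)}$.

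\textbf{Key steps, in order.} First I would write down the norm on $\X$ explicitly and check that the coordinate vectors $(\be_i)$ form a semi-normalized Schauder basis (monotone, if I set it up as an $\ell_1$-direct sum of finite-dimensional pieces ordered consecutively); this also pins down $\alpha_1,\alpha_2$ and the basis constant $\K$. Second, I would compute $\|\one_{\e A}\|$ and more generally $\|x + \one_{\e A}\|$ for $|A| \in \n$ and admissible background vectors $x$ (with $|\be_i^*(x)| \le 1$, disjoint support), obtaining two-sided estimates; the ``one huge gap'' structure of $\n$ means that the only cardinalities in $\n$ that matter for the comparison are $N$ and things $\le N$, so all the relevant computations reduce to finite-dimensional norm estimates on the $N$-dimensional block. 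Third, from those estimates I would read off the symmetry constant $\Delta$: upper-bounding $\|x+\one_{\e A}\|$ by $\|x+\one_{\e' B}\|$ uses that the background $x$ is common to both sides and that inside the block the sum-norm is controlled, giving $\Delta \lesssim N^{1/(2p)}$ (this is where the square-root improvement over the superdemocracy ratio comes from — one factor of $N^{1/(2p)}$ for going from $\e A$ down to a flat vector and the ``other half'' is absorbed by the shared $x$). Fourth, I would exhibit the worst pair for superdemocracy: $A$ with all signs chosen so the $\ell_\infty$-component of $\one_{\e A}$ is as small as possible, versus $B$ the positive sum, both of size $N$, yielding $\Delta_s \gtrsim N^{1/p}$; combining with $\Delta \approx N^{1/(2p)}$ gives $\Delta_s \gtrsim \Delta^2 / (\text{constant})$, and then choosing $N$ large absorbs the constant and the $\delta$-loss so that $\Delta_s \ge \Delta^{2-\delta}$ and $\Delta > M$.

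\textbf{Main obstacle.} The delicate point is getting the symmetry constant $\Delta$ \emph{small enough} — i.e. genuinely of order $N^{1/(2p)}$ rather than $N^{1/p}$ — while still forcing the superdemocracy constant to be of full order $N^{1/p}$. These two demands pull in opposite directions: the same ``flatness inside a block'' that makes $\one_{\e A}$ small under sign cancellation threatens to also make $\|x+\one_{\e A}\|$ small under sign cancellation, which would kill the symmetry constant rather than leaving it at $\sqrt{\Delta_s}$. The resolution has to be that the symmetry inequality always involves a fixed background $x$ of the \emph{same} modulus pattern on both sides, so the cancellation available on the left is matched by cancellation available on the right and the net ratio is only the ``one-block scaling'' $N^{1/(2p)}$; making this precise — choosing the scaling exponent $p=p(\delta)$ so that $(2p)^{-1}$ and $p^{-1}$ land on the right side of the inequality $\Delta_s \ge \Delta^{2-\delta}$, and verifying the upper bound for $\Delta$ over \emph{all} admissible $x$, not just a convenient one — is the technical heart of the construction. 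Everything else (Schauder-ness, semi-normalization, the reduction to finite-dimensional estimates via the single-gap structure of $\n$) is routine bookkeeping.
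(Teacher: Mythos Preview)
Your proposal has the right shape (engineer a space where $\Delta_s$ is much larger than $\Delta$), but the heuristic you give for why $\Delta \approx \sqrt{\Delta_s}$ is wrong, and the construction you sketch does not produce the gap.

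The problem is your claim that the ``shared background $x$'' absorbs one factor of the exponent. Set $x=0$: the symmetry inequality then reads $\|\one_{\e A}\|\le \Delta\|\one_{\e' B}\|$ for $A\cap B=\emptyset$, $|A|=|B|\in\n$. In your $\ell_1$-sum of an $\ell_p^N$-block and an $\ell_\infty$-type piece, take $A$ to be the $\ell_p$-block with all plus signs ($\|\one_A\|\approx N^{1/p}$) and $B$ any $N$-set in the $\ell_\infty$-piece ($\|\one_{\e' B}\|\approx 1$); these are disjoint, so already $\Delta\ge N^{1/p}=\Delta_s$. The common $x$ does nothing. (There is a second difficulty: a pure $\ell_p$-block has no sign sensitivity, so you also need to explain where the cancellation that drives $\Delta_s$ up actually lives.)

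What really makes $\Delta_s$ approach $\Delta^2$ is the \emph{disjointness} constraint $A\cap B=\emptyset$ in Definition~\ref{definitionnslq}, together with a superdemocracy failure localized on a \emph{single} set $B_m$ of size $m\in\n$. The paper takes $\|(a_i)\|=\max\bigl\{|\sum_{i\le m}a_i|,\ \|(a_i)_{i\le m}\|_p,\ \|(a_i)_{i>m}\|_q\bigr\}$ with $1<q<p$ and $\n=\{m\}\cup\N_{>m^q+m}$: the summing functional on $B_m=\{1,\dots,m\}$ gives $\|\one_{B_m}\|=m$ while alternating signs give $\|\one_{\e B_m}\|=m^{1/p}$, so $\Delta_s\ge m^{1-1/p}$. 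This worst pair has $A=B=B_m$, hence is \emph{not} testable in the symmetry inequality; one needs two disjoint comparisons through an intermediate set, each costing only $\Delta$. For the symmetry bound, $|A|=|B|=m$ and $A\cap B=\emptyset$ force at least one of $A,B$ to lie (mostly) in the $\ell_q$-tail, and a direct computation gives $\Delta\le m^{1/q-1/(p+\epsilon)}$; choosing $p,q,\epsilon$ so that $1-1/p\ge(2-\delta)(1/q-1/(p+\epsilon))$ yields $\Delta_s\ge\Delta^{2-\delta}$. The $\ell_q$-tail is exactly what calibrates the ``outside'' norm to sit between $m^{1/p}$ and $m$; your $\ell_\infty$-tail puts it at $1$, which is why your $\Delta$ blows up to $\Delta_s$.
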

\begin{proof}
Fix $0<\epsilon<1<q<p $ so that the following hold: 
\begin{eqnarray}
1-\frac{1}{q}&\le& \frac{1}{q}-\frac{1}{p+\epsilon},\label{cond1}\\
1-\frac{1}{p}&\ge & \left(2-\delta\right)\left(\frac{1}{q}-\frac{1}{p+\epsilon}\right).\label{cond3}
\end{eqnarray}
For example, one can take $q=\frac{8}{5}$ and $p=4-\epsilon$ for a sufficiently small $\epsilon$. Now choose $m\in \N$ an even number sufficiently large so that 
	\begin{equation}
	m^{\frac{1}{q}-\frac{1}{p+\epsilon}}>2+2^{\frac{1}{p}}m^{\frac{1}{q}-\frac{1}{p}}\qquad  \text{and}\qquad m^{1-\frac{1}{p}}>M^2. \label{largem}
	\end{equation}
	Define $\X$ as the completion of $\mathtt{c}_{00}$ with the norm 
	$$
	\left|\left|(a_i)_i\right|\right|:= \max{\left\lbrace \left|\sum\limits_{i=1}^{m}a_i\right|,\left(\sum\limits_{i=1}^{m}\left|a_i\right|^{p}\right)^{\frac{1}{p}}, \left(\sum\limits_{i=m+1}^{\infty}\left|a_i\right|^{q}\right)^{\frac{1}{q}}\right\rbrace},
	$$ 
	and let $\n$ be the sequence $\{m\}\cup \N_{>m^q+m}$, and $B_m:=\{1,\dots,m\}$. \\
	As the norm $\left|\left|\cdot \right|\right|$, when restricted to $\left(\be_i\right)_{i\ge m+1}$, coincides with the usual norm on $\ell_{q}$,  it follows easily that the unit vector basis $\B=\left(\be_i\right)_{i\in \N}$ is a symmetric basis for $\X$, and thus it is symmetric for largest coefficients. Hence, in particular there are constants $\Delta>0$ and $\Delta_s>0$ such that $\B$ is $\Delta$-$\n$-symmetric for largest coefficients and $\Delta_s$-$\n$-superdemocratic. \\
	To estimate $\Delta$, by Lemma~\ref{equiv} it is enough to consider sets $A,B\subset \N$ with $|A|=|B|\in \n$, $\varepsilon\in \Psi_A$, $\varepsilon'\in \Psi_B$, and $x\in\X$ with finite support such that $\vert\be_i^*(x)\vert\leq 1\, \forall i\in\mathbb N$ and $\supp(x)\cap (A\cup B)=\emptyset$. \\First we consider the case $|A|=|B|> m^{q}+m$. Take $D>A\cup B\cup \supp{(x)}$ with $|D|=m$. By \eqref{cond1} we have
	\begin{equation}
	\|P_{B_m}(x+\bff_{\bfe A})\|\le m\le \frac{m}{\left(|B|-m\right)^{\frac{1}{q}}} \left(\sum\limits_{i>m}\left|\be_i^*(\bff_{\bfe A\setminus B_m})\right|^{q}\right)^{\frac{1}{q}}\le \|x+\bff_{\bfe' B}\|.   \label{proj1}
	\end{equation}
On the other hand, 
	\begin{eqnarray}
	\|P_{B_m^c}(x+\bff_{\bfe A})\|&=& \left(\left(\sum\limits_{i=m+1}^{\infty}\left|\be^*_i(x)\right|^{q}\right)+|A\setminus B_m| \right)^{\frac{1}{q}}\nonumber\\
	&\le& \left(\left(\sum\limits_{i=m+1}^{\infty}\left|\be^*_i(x)\right|^{q}\right)+|B\setminus B_m| +|D| \right)^{\frac{1}{q}}\nonumber\\
	&\le&  \|P_{B_m^c}(x+\bff_{\bfe B})+\bff_{ D}\|\le  \|P_{B_m^c}(x+\bff_{\bfe B})\| +\|\bff_{ D}\|\nonumber\\
	&\le & \|x+\bff_{\bfe B}\| +\|P_{B_m^c}(\bff_{\bfe' B})\|\le 2\|x+\bff_{\bfe' B}\|.\label{proj2}
	\end{eqnarray}
	Combining \eqref{proj1} and \eqref{proj2} we obtain
	\begin{equation}
	\|x+\bff_{\bfe A}\|= \max{\left\lbrace\|P_{B_m}(x+\bff_{\bfe A})\|,\|P_{B_m^c}(x+\bff_{\bfe A})\|\ \right\rbrace}\le 2\|x+\bff_{\bfe' B}\|.\label{bound>m}
	\end{equation}
	Now we consider the case $|A|=|B|=m$. As $|\supp{(P_{B_m}(x+\bff_{\bfe A}))}|\le  |B_m\setminus B|=|B\setminus B_m|$, by \eqref{cond1} we have 
	\begin{eqnarray}
	\|P_{B_m}(x+\bff_{\bfe A})\|&\le& |\supp{(P_{B_m}(x+\bff_{\bfe A}))}|\le |B\setminus B_m|^{\frac{1}{q}-\frac{1}{p+\epsilon}}|B\setminus B_m|^{\frac{1}{q}}\nonumber\\
	&\le& m^{\frac{1}{q}-\frac{1}{p+\epsilon}}\|P_{B_m^c}(\bff_{\bfe' B})\|\le m^{\frac{1}{q}-\frac{1}{p+\epsilon}}\|x+\bff_{\bfe' B}\|,\label{proj1m}
	\end{eqnarray}
	and 
	\begin{eqnarray}
	\|P_{B_m^c}(\bff_{\bfe A})\|&\le& \frac{m^{\frac{1}{q}}}{\max{\left\lbrace |B\setminus B_m|^\frac{1}{q}, |B\cap B_m|^{\frac{1}{p}}\right\rbrace}}\max{\left\lbrace|B\setminus B_m|^\frac{1}{q}, |B\cap B_m|^{\frac{1}{p}}\right\rbrace}\nonumber\\
	&\le & \frac{m^{\frac{1}{q}}}{\left(\frac{m}{2}\right)^{\frac{1}{p}}}\max{\left\lbrace \|P_{B_m^c}(x+\bff_{\bfe' B})\|, \|P_{B_m}(x+\bff_{\bfe' B})\|  \right\rbrace}\nonumber\\
	&=&2^{\frac{1}{p}}m^{\frac{1}{q}-\frac{1}{p}}\|x+\bff_{\bfe' B}\|.\label{lastone}
	\end{eqnarray}
	As $\|P_{B_m^c}(x)\|\le \|x+\bff_{\bfe' B}\|$, from \eqref{proj1m}, \eqref{lastone} and the triangle inequality we obtain
	\begin{eqnarray}
	\|x+\bff_{\bfe A}\|&=&\max{\left\lbrace\|P_{B_m}(x+\bff_{\bfe A})\|,\|P_{B_m}^c(x+\bff_{\bfe A})\|\ \right\rbrace}\nonumber\\
	&\le & \max{\left\lbrace m^{\frac{1}{q}-\frac{1}{p}+\epsilon}, 1+ 2^{\frac{1}{p}}m^{\frac{1}{q}-\frac{1}{p}}\right\rbrace}\|x+\bff_{\bfe' B}\|\nonumber\\
	&=&m^{\frac{1}{q}-\frac{1}{p+\epsilon}}\|x+\bff_{\bfe' B}\|,\label{bound=m}
	\end{eqnarray}
	where we used \eqref{largem} for the last estimate. From \eqref{bound>m} and\eqref{bound=m}, using \eqref{largem} we deduce that
	\begin{equation}
	\Delta\le m^{\frac{1}{q}-\frac{1}{p+\epsilon}}.\label{boundfornpslc}
	\end{equation}
	Now let $\varepsilon'\in \Psi_{B_m}$ be any sequence of alternating signs. As $m$ is even, we have
	\begin{equation*}
	\sum\limits_{i=1}^{m}\be_i^*(\bff_{\bfe' B_m})=0. 
	\end{equation*}
	Thus,
	\begin{equation*}
	\|\bff_{\bfe' B_m}\|=m^{\frac{1}{p}}. 
	\end{equation*}
	Since $\|\bff_{B_m}\|=m$, we conclude (using \eqref{cond3} and \eqref{boundfornpslc}) that 
	\begin{equation}
	\Delta_{s}\ge m^{1-\frac{1}{p}}\ge \left(m^{\frac{1}{q}-\frac{1}{p+\epsilon}}\right)^{2-\delta}\ge \Delta^{2-\delta}. \nonumber
	\end{equation}
	Finally, from this result and \eqref{largem}, by Lemma~\ref{rem1} we get 
	\begin{equation}
	\Delta\ge \Delta_s^{\frac{1}{2}}\ge \left( m^{1-\frac{1}{p}}\right)^{\frac{1}{2}}>M.\nonumber
	\end{equation}
\end{proof}

Our next result shows that $\n$ symmetry for largest coefficients can be characterized in terms of $\n$-superdemocracy and $\n$-quasi-greediness for largest coefficients (see \cite{AABW}).

\begin{proposition}\label{propositionequivnslcnsdnqglc}
	A basis $\mathcal B$ is $\n$-symmetric for largest coefficients if and only if $\mathcal B$ is $\n$-superdemocratic and $\n$-quasi-greedy for largest coefficients. Moreover,
	$$\C_{ql}\leq 1+\Delta,\;\; \Delta\leq 1+\C_{ql}(1+\Delta_s).$$
\end{proposition}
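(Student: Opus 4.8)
The plan is to prove the two inequalities separately, as the equivalence follows immediately from the quantitative bounds: if $\mathcal B$ is $\n$-symmetric for largest coefficients then it is $\n$-superdemocratic by Lemma~\ref{rem1} and $\n$-quasi-greedy for largest coefficients by the bound $\C_{ql}\le 1+\Delta$; conversely, if it is $\n$-superdemocratic with constant $\Delta_s$ and $\n$-quasi-greedy for largest coefficients with constant $\C_{ql}$, the bound $\Delta\le 1+\C_{ql}(1+\Delta_s)$ gives $\n$-symmetry.

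First I would prove $\C_{ql}\le 1+\Delta$. By Lemma~\ref{equiv0}, it suffices to bound $\|\bff_{\bfe A}\|$ by a constant times $\|\bff_{\bfe A}+x\|$ for $A$ with $|A|\in\n$, $\bfe\in\Psi_A$, and $x\in[\be_j:j\in\N]$ with $\supp(x)\cap A=\emptyset$ and $\|x\|_\infty\le 1$. The idea is to produce a disjoint set $B$ with $|B|=|A|\in\n$, $B>\supp(x)\cup A$, and apply $\n$-symmetry for largest coefficients to the pair $(A,B)$ with the perturbation $x$: this gives $\|x+\bff_{\bfe A}\|\le \Delta\|x+\bff_{B}\|$. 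Then I would split $\|x+\bff_B\|\le \|x\|+\|\bff_{\bfe A}+\ldots\|$ — more precisely, since $B$ is disjoint from $\supp(x)\cup A$, one can write $x+\bff_B = (x+\bff_{\bfe A}) + (\bff_B - \bff_{\bfe A})$, but that is not quite the right bookkeeping; instead the cleaner route is $\|x+\bff_B\|\le \|\bff_{\bfe A}+x\| + \|\bff_B - \bff_{\bfe A}\|$ only if the supports were nested, which they are not. The actual trick is: apply symmetry in the direction $\|\bff_{\bfe A}+x\|\le\ldots$ is what we want to bound from below, so we instead bound $\|\bff_{\bfe A}\| \le \|\bff_{\bfe A}+x\| + \|x\|$ won't work since $\|x\|$ is uncontrolled. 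Rather, I would use symmetry with $x$ itself replaced appropriately, or observe $\|\bff_{\bfe A}\|\le \|\bff_{\bfe A}+x_{|A^c}\|$ where we add back nothing; the correct classical argument is $\|\bff_{\bfe A}\|\le \Delta\|\bff_{B}+ (\bff_{\bfe A}+x - \bff_{\bfe A})\|$ — let me just say the plan is to choose $B$ disjoint from everything with $|B|=|A|$, note $\supp(x-x)=\emptyset$, apply $\n$-symmetry to get $\|\bff_{\bfe A}\|=\|0+\bff_{\bfe A}\|\le\Delta\|0+\bff_B\|=\Delta\|\bff_B\|$, and then compare $\|\bff_B\|$ with $\|\bff_{\bfe A}+x\|$ via the identity $\bff_B = (\bff_{\bfe A}+x) + (\bff_B - \bff_{\bfe A}-x)$, handling $\bff_B-\bff_{\bfe A}-x$ by a further application of symmetry or by absorbing $\bff_B$ against $\bff_{\bfe A}$: since $B,A$, and $\supp x$ are pairwise disjoint and $\|x\|_\infty\le 1$, the vector $\bff_B-x$ has coordinates bounded by $1$ off $A$, so $\|\bff_B\|=\|\bff_{\bfe A}-\bff_{\bfe A}+\bff_B\|$, apply symmetry-for-largest-coefficients once more to swap $B$ for $A$ in the presence of the perturbation $(\bff_B-\bff_{\bfe A})$ — this is getting circular. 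I expect the clean proof is: $\|\bff_{\bfe A}\| \le \|\bff_{\bfe A}+x\| + \|\bff_B\|$ is false, so instead use $\|\bff_{\bfe A}\|\le\Delta\|\bff_{B'}\|$ for a fresh $B'$ and then $\|\bff_{B'}\| \le \|\bff_{\bfe A}+x\|$ directly because $B'$ can be chosen inside $\supp(x)^c$ far out and one uses that $x+\bff_{\bfe A}$ has a "greedy-like" lower bound; ultimately I would mimic the $\n=\N$ proof from \cite{AABW} and trust the constant $1+\Delta$ comes out.

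Second, for $\Delta\le 1+\C_{ql}(1+\Delta_s)$: given $A,B$ disjoint with $|A|=|B|\in\n$, $\bfe\in\Psi_A$, $\bfe'\in\Psi_B$, and $x$ with $\|x\|_\infty\le1$, $\supp(x)\cap(A\cup B)=\emptyset$, I want $\|x+\bff_{\bfe A}\|\le\Delta\|x+\bff_{\bfe' B}\|$. Write $\|x+\bff_{\bfe A}\|\le\|\bff_{\bfe A}\|+\|x\|$ — again $\|x\|$ uncontrolled; so instead use $\n$-quasi-greediness for largest coefficients: $\|\bff_{\bfe A}\|\le \C_{ql}\|\bff_{\bfe A}+ y\|$ for suitable $y$ with $\|y\|_\infty\le1$ and support disjoint from $A$. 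Take $y = x + \bff_{\bfe' B}$? Its coordinates on $B$ are $1$ and on $\supp x$ at most $1$, support disjoint from $A$, so indeed $\|\bff_{\bfe A}\|\le\C_{ql}\|\bff_{\bfe A}+x+\bff_{\bfe' B}\|$ — wait, that has $\bff_{\bfe A}$ not $x+\bff_{\bfe A}$ on the left. The right decomposition: $x+\bff_{\bfe A}$; apply quasi-greediness for largest coefficients to the part $\bff_{\bfe A}$ with perturbation $x$ to get nothing new. Instead: $\|x+\bff_{\bfe A}\| \le \|x+\bff_{\bfe A}+\bff_{\bfe' B}\|$ is false in general but holds up to $\C_{ql}$: treat $\bff_{\bfe' B}$ as the "large-coefficient" block and $x+\bff_{\bfe A}$... no, $\bff_{\bfe A}$ also has coefficients of size $1$. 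The cleanest: by $\n$-quasi-greediness for largest coefficients applied to the set $B$ with sign $\bfe'$ and perturbation $x+\bff_{\bfe A}$ (valid since $|A|=|B|\in\n$, supports disjoint, coefficients $\le1$), $\|\bff_{\bfe' B}\|\le\C_{ql}\|x+\bff_{\bfe A}+\bff_{\bfe' B}\|$; and symmetrically $\|\bff_{\bfe A}\|\le\C_{ql}\|x+\bff_{\bfe' B}+\bff_{\bfe A}\|$. Then $\|x+\bff_{\bfe A}\|\le\|x+\bff_{\bfe' B}\| + \|\bff_{\bfe A}\|+\|\bff_{\bfe' B}\|$... still has that extra term. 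I think the actual argument routes $\|x+\bff_{\bfe A}\|\le \|x+\bff_{\bfe' B}+\bff_{\bfe A}\|$ up to a $\C_{ql}$ factor (treating $\bff_{\bfe A}$ as the large block with perturbation $x+\bff_{\bfe'B}$, no — $\C_{ql}$ bounds the block from above by the whole, i.e. $\|\bff_{\bfe A}\|\le\C_{ql}\|x+\bff_{\bfe'B}+\bff_{\bfe A}\|$, not $\|x+\bff_{\bfe A}\|\le\ldots$). So: $\|x+\bff_{\bfe A}\| = \|(x+\bff_{\bfe' B}+\bff_{\bfe A}) - \bff_{\bfe' B}\| \le \|x+\bff_{\bfe' B}+\bff_{\bfe A}\| + \|\bff_{\bfe' B}\|$; bound the first summand by swapping: it equals $\|(x+\bff_{\bfe A}+\bff_{\bfe'B})\|$ and by quasi-greediness for largest coefficients in direction... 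Hmm, we need an upper bound on this mixed quantity by $\|x+\bff_{\bfe'B}\|$. Use $\|x+\bff_{\bfe A}+\bff_{\bfe'B}\| \le \|x+\bff_{\bfe'B}\| + \|\bff_{\bfe A}\|$ and $\|\bff_{\bfe A}\|\le\Delta_s\|\bff_{\bfe'B}\|$ (superdemocracy, $|A|=|B|\in\n$) and $\|\bff_{\bfe'B}\|\le\C_{ql}\|x+\bff_{\bfe'B}\|$; also $\|\bff_{\bfe'B}\|\le\C_{ql}\|x+\bff_{\bfe'B}\|$ for the trailing term. Putting it together: $\|x+\bff_{\bfe A}\|\le \C_{ql}\|x+\bff_{\bfe'B}\|\cdot(1+\Delta_s) + \C_{ql}\|x+\bff_{\bfe'B}\| = (1+\C_{ql}(1+\Delta_s))\|x+\bff_{\bfe'B}\|$, giving the claimed bound; Lemma~\ref{equiv} lets us reduce to $|A|=|B|\in\n$. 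The main obstacle I anticipate is getting the bookkeeping of perturbations exactly right so the constants match $1+\Delta$ and $1+\C_{ql}(1+\Delta_s)$ without slack — in particular verifying that in each invocation of Definition~\ref{definitionnQGlc} or Definition~\ref{definitionnslq} the perturbation vector genuinely has all coefficients bounded by $1$ and support disjoint from the relevant index set, which forces careful choices of the auxiliary sets and possibly use of Lemmas~\ref{equiv0} and~\ref{equiv} to pass to finitely supported $x$.
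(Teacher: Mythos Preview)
Your argument for the bound $\Delta\le 1+\C_{ql}(1+\Delta_s)$ is essentially correct and matches the paper's route (which defers to \cite{AABW}): after reducing to $|A|=|B|\in\n$ via Lemma~\ref{equiv}, one writes $\|x+\bff_{\bfe A}\|\le \|x+\bff_{\bfe' B}+\bff_{\bfe A}\|+\|\bff_{\bfe' B}\|$, bounds the first term by $\|x+\bff_{\bfe' B}\|+\|\bff_{\bfe A}\|$, uses $\|\bff_{\bfe A}\|\le\Delta_s\|\bff_{\bfe' B}\|$, and finally $\|\bff_{\bfe' B}\|\le\C_{ql}\|x+\bff_{\bfe' B}\|$. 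Your last displayed equality has an arithmetic slip (the summands you wrote add to $\C_{ql}(2+\Delta_s)$, not $1+\C_{ql}(1+\Delta_s)$), but the ingredients you listed do give the correct constant once assembled carefully.

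There is, however, a genuine gap in your treatment of $\C_{ql}\le 1+\Delta$. You explicitly discard the decomposition $\|\bff_{\bfe A}\|\le\|\bff_{\bfe A}+x\|+\|x\|$ on the grounds that ``$\|x\|$ is uncontrolled'', and then cycle through several dead ends before appealing to \cite{AABW} on faith. But that discarded decomposition is exactly the right starting point, and $\|x\|$ \emph{is} controlled by the $\n$-symmetry hypothesis. The paper's argument: choose $C>\supp(x)\cup A$ with $|C|=|A|\in\n$, and write
\[
\|x\|\le \tfrac12\bigl(\|x+\bff_C\|+\|x-\bff_C\|\bigr).
\]
Now apply $\n$-symmetry for largest coefficients to each term, swapping the set $C$ (with sign $\pm 1$) for the set $A$ (with sign $\bfe$), which is legitimate since $C\cap A=\emptyset$, $|C|=|A|\in\n$, and $\supp(x)\cap(A\cup C)=\emptyset$: this yields $\|x\pm\bff_C\|\le\Delta\|x+\bff_{\bfe A}\|$, hence $\|x\|\le\Delta\|x+\bff_{\bfe A}\|$ and $\|\bff_{\bfe A}\|\le(1+\Delta)\|x+\bff_{\bfe A}\|$. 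The missing idea is that symmetry for largest coefficients lets you \emph{introduce} a block $\bff_C$ into $x$ at a cost of $\Delta$, thereby dominating the bare $\|x\|$.
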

\begin{proof}
	To show that $\n$-superdemocracy and $\n$-quasi-greediness together imply $\n$-symmetry for largest coefficients, just follow the proof of \cite[Proposition 4.3]{AABW}. Assume now that $\B$ is $\n$-symmetric for largest coefficients. By Lemma~\ref{rem1}, $\B$ is $\n$-superdemocratic. Given $x\in [\be_j:j\in \N]$, $\vert A\vert\in\n$ with $A\cap\supp(x)=\emptyset$ and $\bfe \in \Psi_A$, choose $C>\supp(x)\cup A$ so that $\vert C\vert=\vert A\vert$. We have
	\begin{eqnarray}\label{pr1}
	\nonumber	\Vert \mathbf{1}_{\varepsilon A}\Vert&\leq& \Vert x+\mathbf{1}_{\varepsilon A}\Vert+\Vert x\Vert\\
	\nonumber&\leq&\Vert x+\mathbf{1}_{\varepsilon A}\Vert+\frac{1}{2}(\Vert x+\mathbf{1}_C\Vert\ +\Vert x-\mathbf{1}_C\Vert)\\
	&\leq& \Vert x+\mathbf{1}_{\varepsilon A}\Vert+\Delta\Vert x+\mathbf{1}_{\varepsilon A}\Vert.
	\end{eqnarray}
	By \eqref{pr1} and Lemma~\ref{equiv0}, $\mathcal B$ is $\C_{ql}$-$\n$-quasi-greedy for largest coefficients with $\C_{ql}\leq 1+\Delta$.
\end{proof}
Our next two results characterize the sequences $\n$ for which $\n$-symmetry for largest coefficients is equivalent to symmetry for largest coefficients. 
\begin{proposition}\label{lemmaarbitrarilylargegaps}Let $\n$ be a sequence with arbitrarily large quotient gaps. There is a Banach space $\X$ with a Schauder basis $\B$ that is $\n$-symmetric for largest coefficients but not democratic. 
\end{proposition}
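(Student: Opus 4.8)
The plan is to build a symmetric-like space adapted to the large quotient gaps of $\n$, in the spirit of Proposition~\ref{propositionnslcnsdem} and Example~\ref{examplendemocratic}, but where the small-cardinality ``bad'' sets live in an $\ell_1$-like block and the large-cardinality sets live in an $\ell_q$-like block with $q>1$. Since $\n$ has arbitrarily large quotient gaps, I can pass to a subsequence $(n_{k_j})_j$ with $n_{k_j+1}/n_{k_j}\to\infty$, and between consecutive ``active'' cardinalities $n_{k_j}$ and $n_{k_{j}+1}$ there is enough room to separate the two regimes: I will place a block $E_j$ of coordinates on which the norm behaves like $\ell_1$ (so that $\|\one_{E_j}\|=|E_j|$ is large compared to $\|\one_{\e E_j}\|$ for an alternating-sign choice, killing democracy), while the cardinalities actually lying in $\n$ are large enough that any set realizing such a cardinality must spill into the $\ell_q$-part, where constant-coefficient sums of different sign patterns are comparable.

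\textbf{Key steps.} First I would fix the exponents and, using that $\n$ has arbitrarily large quotient gaps, choose an increasing sequence of indices and associated parameters so that for each $j$ the gap $(n_{k_j}, n_{k_j+1})$ contains an interval of integers of length growing fast enough to host an $\ell_1$-block $E_j$ with $|E_j|\notin\n$, while $n_{k_j}$ and $n_{k_j+1}$ both belong to $\n$ (after possibly relabelling, I only need infinitely many elements of $\n$, which is automatic). Second, I would define $\X$ as the completion of $\mathtt{c}_{00}$ under a norm of the form
$$
\|(a_i)_i\| := \max\left\{ \sup_j \Bigl|\sum_{i\in E_j} a_i\Bigr|,\ \Bigl(\sum_{i}|a_i|^{p}\Bigr)^{1/p},\ \Bigl(\sum_{i\notin\bigcup_j E_j}|a_i|^{q}\Bigr)^{1/q} \right\},
$$
with $1<q<p$ suitably chosen, so that the unit vector basis $\B$ is a (unconditional, even $1$-unconditional) Schauder basis and the $\ell_q$-part dominates on coordinates outside the $E_j$'s. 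Third, I would verify $\n$-symmetry for largest coefficients: by Lemma~\ref{equiv} it suffices to treat $|A|=|B|\in\n$ with $A\cap B=\emptyset$, $\e\in\Psi_A$, $\e'\in\Psi_B$, and finitely supported $x$ with $\|x\|_\infty\le 1$ disjoint from $A\cup B$. Since $|A|=|B|\in\n$ is a "large" cardinality, any such set meets each $E_j$ in a controlled proportion and mostly lies in the $\ell_q$-part, so the three pieces of $\|x+\one_{\e A}\|$ are each dominated by $\|x+\one_{\e' B}\|$ up to a uniform constant, exactly as in the case analysis of Proposition~\ref{propositionnslcnsdem} (splitting $\|P_{E_j}(x+\one_{\e A})\|$, the global $\ell_p$-part, and the $\ell_q$-part, and in each case inserting an auxiliary disjoint set $D>A\cup B\cup\supp(x)$ of the appropriate size to absorb the mismatch). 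Fourth, I would destroy democracy: take $A=E_j$ (so $|A|\notin\n$, which is fine — democracy is the $\n=\N$ notion) and $\e$ an alternating sign pattern on $E_j$; then $\|\one_{\e E_j}\|$ equals roughly $|E_j|^{1/p}$ while $\|\one_{E_j}\|=|E_j|$, and letting $j\to\infty$ forces the democracy constant (indeed even the unconditionality-for-constant-coefficients constant) to blow up, so $\B$ is not democratic.

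\textbf{Main obstacle.} The delicate point is the simultaneous calibration of the parameters: I must choose $q,p$ and the block sizes $|E_j|$ and the target cardinalities in $\n$ so that (a) $|E_j|\notin\n$ and $|E_j|\to\infty$, (b) the cardinalities in $\n$ used in the symmetry argument are large enough that sets of that size cannot hide inside $\bigcup_j E_j$ and are genuinely $\ell_q$-dominated, and (c) the various exponent inequalities needed for the three-piece estimate (analogues of \eqref{cond1}, \eqref{cond3}, \eqref{largem}) hold uniformly in $j$. The arbitrarily-large-quotient-gap hypothesis is exactly what provides the room for (a)–(b) — between $n_{k_j}$ and $n_{k_j+1}$ the ratio is as large as needed — and the worry is bookkeeping: making sure the constant in the symmetry estimate does not depend on $j$. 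Once the parameters are pinned down, the verification is a routine (if lengthy) case analysis of the max-of-three norm, entirely parallel to the computation already carried out in the proof of Proposition~\ref{propositionnslcnsdem}, so I would present the parameter choices carefully and then refer to that computation for the repetitive estimates.
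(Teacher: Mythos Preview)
Your construction has a genuine gap: with the norm you propose, the unit vector basis is not $\n$-symmetric for largest coefficients --- it is not even $\n$-democratic. The culprit is the unscaled term $\sup_j\bigl|\sum_{i\in E_j}a_i\bigr|$. Since you need $|E_j|\to\infty$ to destroy democracy, for every $n\in\n$ there exists $j$ with $|E_j|>n$; taking $A\subset E_j$ with $|A|=n$ and $\e\equiv 1$ gives $\|\one_A\|\ge n$, whereas for $B$ of size $n$ disjoint from $\bigcup_{j'} E_{j'}$ one has $\|\one_B\|=n^{1/q}$, so $\|\one_A\|/\|\one_B\|=n^{1-1/q}\to\infty$ along $\n$. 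Your requirement (b) --- that sets of cardinality in $\n$ ``cannot hide inside $\bigcup_j E_j$'' --- is impossible to arrange once $|E_j|\to\infty$: subsets of $E_j$ of any size $\le|E_j|$ exist. (The same unscaled term also prevents the basis from being Schauder, contrary to your parenthetical claim of $1$-unconditionality: split $E_j=E_j'\cupdot E_j''$ into two consecutive halves and take $x=\one_{E_j'}-\one_{E_j''}$; then $\|x\|=|E_j|^{1/p}$ but $\|S_{\max E_j'}x\|=|E_j|/2$.) A minor additional point: your Step~4 argument with alternating signs shows failure of unconditionality for constant coefficients, not failure of democracy; for the latter you should compare $\one_{E_j}$ with $\one_B$ for $B$ of the same size outside the blocks.

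The paper's proof simply refers to Example~\ref{examplendemocratic} (Oikhberg's construction, slightly modified), where items~\ref{dex1} and~\ref{fex1} already give $\n$-symmetry for largest coefficients together with failure of conservativeness, hence of democracy. The essential feature your norm lacks is the \emph{scaling} of the bad block term: there the partial-sum functional on the $i$-th block carries the factor $c_i/\sqrt{m_i}$, and the choice of $c_i,m_i$ in terms of $n_{k_i},n_{k_i+1}$ forces $\frac{c_i}{\sqrt{m_i}}\cdot|A|\le\sqrt{2}\,\sqrt{|A|}$ precisely when $|A|\le n_{k_i}$ or $|A|\ge n_{k_i+1}$ (see \eqref{biggeri}--\eqref{smalleri}), i.e., exactly when $|A|\in\n$. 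That calibration is what allows the bad term to blow up only for cardinalities in the gaps of $\n$ while remaining comparable to the $\ell_2$ part for cardinalities in $\n$; without it, the approach cannot work.
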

\begin{proof}
See Example~\ref{examplendemocratic} and Remark~\ref{remarkunconditional}. 
\end{proof}

%
%
%

%

\begin{theorem}\label{theoremnsymlc}
	Let $\mathcal B$ be a basis and assume that $\n$ has $l$-bounded quotient gaps. If $\mathcal B$ is $\Delta$-$\n$-symmetric for largest coefficients, then $\B$ is $\mathbf{C}$-symmetric for largest coefficients with $\mathbf{C}\leq\max\lbrace 1+2(n_1-1)\alpha_1\alpha_2,\, 1+2\Delta^2(1+l)\rbrace$.
\end{theorem}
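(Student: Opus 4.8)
The plan is to reduce the general case $|A|=|B|\notin\n$ to the case where both cardinalities lie in $\n$, using Lemma~\ref{equiv} together with Lemma~\ref{lemmakey}, in a manner parallel to the proof of Proposition~\ref{propositioncglcbqg}. By Lemma~\ref{equiv}, it suffices to bound $\|x+\bff_{\bfe A}\|$ in terms of $\|x+\bff_{\bfe' B}\|$ when $|A|=|B|\in\N$, $A\cap B=\emptyset$, $\supp(x)\cap(A\cup B)=\emptyset$, $\|x\|_\infty\le 1$, and $x\in[\be_j:j\in\N]$. If $|A|=|B|\in\n$ there is nothing to do, so assume $|A|=|B|=N\notin\n$ with $N>0$; write $x+\bff_{\bfe A}$ and apply Lemma~\ref{lemmakey}\ref{previous} to the family $(y_j)_{j\in A}$ where $y_j=\e_j\be_j$. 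Either $\|\bff_{\bfe A}\|=\max_{E\subseteq A}\|\sum_{j\in E}y_j\|$ is itself small — here one must be a bit careful, since Lemma~\ref{lemmakey} bounds the max over subsets, not $\|\bff_{\bfe A}\|$ directly, but $\|\bff_{\bfe A}\|\le\max_{E\subseteq A}\|\bff_{\bfe E}\|$ — so in the first alternative $\|\bff_{\bfe A}\|\le(n_1-1)\alpha_1$, and then $\|x+\bff_{\bfe A}\|\le\|x\|+\|\bff_{\bfe A}\|\le\|x+\bff_{\bfe' B}\|+(n_1-1)\alpha_1$, which I must in turn compare to $\|x+\bff_{\bfe' B}\|$; since $\|x+\bff_{\bfe' B}\|\ge\alpha_2^{-1}$ (any single coordinate of $\bff_{\bfe' B}$ has modulus $1$), this gives the bound $1+(n_1-1)\alpha_1\alpha_2$ on this branch, and doubling it covers the triangle-inequality slack, matching the $1+2(n_1-1)\alpha_1\alpha_2$ term.

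In the second alternative, Lemma~\ref{lemmakey}\ref{previous} supplies $A_0\subseteq A$ with $|A_0|\in\n$ and $\|\bff_{\bfe A}\|\le l\,\|\bff_{\bfe A_0}\|$. The strategy now is: split off $\bff_{\bfe A\setminus A_0}$ by the triangle inequality, move $A_0$ over to a comparable subset $B_0\subseteq B$ with $|B_0|=|A_0|\in\n$ using the hypothesis of $\n$-symmetry for largest coefficients (applied with the "perturbation" vector $x+\bff_{\bfe,A\setminus A_0}$, whose coordinates are still bounded by $1$ and whose support is disjoint from $A_0\cup B_0$), and then restore $B\setminus B_0$ again by the triangle inequality. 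Concretely: $\|x+\bff_{\bfe A}\|\le\|x+\bff_{\bfe A_0}\|+\|\bff_{\bfe,A\setminus A_0}\|$; the first term is $\le\Delta\|x+\bff_{\bfe'B_0}\|\le\Delta(\|x+\bff_{\bfe'B}\|+\|\bff_{\bfe',B\setminus B_0}\|)$; and the two leftover pieces $\|\bff_{\bfe,A\setminus A_0}\|$ and $\|\bff_{\bfe',B\setminus B_0}\|$ are each comparable to $\|\bff_{\bfe A}\|$ and $\|\bff_{\bfe'B}\|$ respectively via the same $\n$-symmetry estimate (since $|A\setminus A_0|\le|A|$ need not lie in $\n$, one instead uses that these are "large-coefficient" blocks and applies Lemma~\ref{lemmakey} once more, or — cleaner — one bounds them against $\|x+\bff_{\bfe A}\|$ and $\|x+\bff_{\bfe'B}\|$ directly, at the cost of a factor $l$ from another invocation of Lemma~\ref{lemmakey}). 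Carefully tracking the constants through these triangle-inequality steps should yield the factor $1+2\Delta^2(1+l)$: one $\Delta$ from moving $A_0\to B_0$, a second $\Delta$ (or a $\Delta_s\le\Delta^2$ from Lemma~\ref{rem1}) from controlling the leftover blocks, and the $(1+l)$ and the leading $1$ from the triangle-inequality bookkeeping, with the overall $2$ absorbing the real/complex and doubling slack.

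The main obstacle I anticipate is the bookkeeping for the "leftover blocks" $A\setminus A_0$ and $B\setminus B_0$: their cardinalities are generally not in $\n$, so $\n$-symmetry does not apply to them verbatim, and one must decide whether to re-run Lemma~\ref{lemmakey} on each (introducing extra factors of $l$) or to fold them into a single application by choosing $A_0$ and $B_0$ more cleverly. The cleanest route is probably: first use $\n$-symmetry (via Lemma~\ref{lemmakey}) to pass from $\|x+\bff_{\bfe A}\|$ to something of the form $l\,\|x'+\bff_{\bfe A_0}\|$ where $x'=x+\bff_{\bfe,A\setminus A_0}$ has $\infty$-norm $1$, then apply $\n$-symmetry once to swap $A_0$ for $B_0$, then undo the reduction on the $B$ side symmetrically; the symmetry of the two sides is what produces the $\Delta^2$ rather than $\Delta$. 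I would also double-check the edge case $N<n_1$ separately (it falls into the first alternative automatically since then $\|\bff_{\bfe A}\|\le(n_1-1)\alpha_1$), and the degenerate case $A=\emptyset$. Beyond this, every step is a routine triangle-inequality-plus-normalization estimate, so I do not expect further difficulties.
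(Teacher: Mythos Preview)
Your proposal takes a different route from the paper's, and the ``main obstacle'' you identified is real and not resolved by what you wrote.

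The paper does \emph{not} use Lemma~\ref{lemmakey} here. Instead, it starts from the crude triangle inequality
\[
\|x+\bff_{\bfe A}\|\le\|x+\bff_{\bfe' B}\|+\|\bff_{\bfe A_0}\|+\|\bff_{\bfe A_1}\|+\|\bff_{\bfe' B_0}\|+\|\bff_{\bfe' B_1}\|
\]
(with $|A_0|=|B_0|=n_i\in\n$, $n_i\le|A|<n_{i+1}$) and then controls each of the four indicator pieces via an \emph{auxiliary set} $C>\supp(x)\cup A\cup B$ with $|C|=n_i$. The two key tricks are: (i) the averaging $\|\bff_C\|\le\tfrac12(\|x+\bff_{\bfe' B_1}+\bff_C\|+\|x+\bff_{\bfe' B_1}-\bff_C\|)$ followed by one application of $\n$-symmetry swapping $\pm\bff_C$ for $\bff_{\bfe' B_0}$, which gives $\|\bff_C\|\le\Delta\|x+\bff_{\bfe' B}\|$; and (ii) for the leftover $A_1$ (whose cardinality is not in $\n$), \emph{padding} with a further far-away set $F$ so that $|A_1|+|F|=l n_i$, then partitioning $\bff_{\bfe A_1}\pm\bff_F$ into $l$ blocks each of cardinality $n_i$, each bounded by $\Delta\|\bff_C\|$. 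This padding step is precisely the device that dissolves your ``leftover blocks'' problem, and it is absent from your sketch.

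Your outlined route has two concrete problems. First, in the small case you write $\|x\|\le\|x+\bff_{\bfe' B}\|$; without any unconditionality or Schauder hypothesis this is false, and you must instead use $\|x+\bff_{\bfe A}\|\le\|x+\bff_{\bfe' B}\|+\|\bff_{\bfe A}\|+\|\bff_{\bfe' B}\|$ and bound both indicator terms. Second, and more seriously, your ``cleanest route'' contains a confusion: $\|x+\bff_{\bfe A}\|=\|x'+\bff_{\bfe A_0}\|$ is an \emph{equality}, so there is no factor of $l$ and no reduction; the factor $l$ from Lemma~\ref{lemmakey} only helps you bound $\|\bff_{\bfe A}\|$, not $\|x+\bff_{\bfe A}\|$. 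After one swap $A_0\leftrightarrow B_0$ you are left with $\|x+\bff_{\bfe,A\setminus A_0}+\bff_{\bfe' B_0}\|$, and the remaining $A\setminus A_0$, $B\setminus B_0$ have cardinalities outside $\n$, so a second swap is unavailable. Your suggested fixes (re-run Lemma~\ref{lemmakey}, or bound against the very quantity you are estimating) are either vague or circular as written. They can in fact be made to work---e.g.\ by combining Lemma~\ref{lemmakey} with $\n$-quasi-greediness for largest coefficients (Proposition~\ref{propositionequivnslcnsdnqglc})---but that yields a different constant, and you have not carried it out. The paper's padding-with-$F$ idea is the clean way to hit exactly $1+2\Delta^2(1+l)$.
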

\begin{proof}
	
	To show that $\mathcal B$ is symmetric for largest coefficients we use Lemma \ref{equiv}: Take $x\in [\be_j: j\in \N]$ so that $\max_{j\in \N}\vert \be_j^*(x)\vert \leq 1$, and two finite sets $A,B\subset \N$ so that $A\cap B=\emptyset$, $\vert A\vert= \vert B\vert$, and $\supp(x)\cap (A\cup B)=\emptyset$.

	Assume first that there exists $i\in\mathbb{N}$ such that $n_i \leq m\leq n_{i+1}$ with $n_i, n_{i+1}\in\n$. Then, we can decompose $A=A_0\cup A_1$ and $B=B_0\cup B_1$ with $\vert A_0\vert=\vert B_0\vert= n_i\in\n$. Thus,
	\begin{eqnarray}\label{one}
	\Vert x+\mathbf{1}_{\varepsilon A}\Vert \leq \Vert x+\mathbf{1}_{\varepsilon' B}\Vert + \Vert\mathbf{1}_{\e A_0}\Vert+\Vert\mathbf{1}_{\e A_1}\Vert +\Vert\mathbf{1}_{\e'B_0}\Vert+\Vert\mathbf{1}_{\e' B_1}\Vert.
	\end{eqnarray} 
	
	Take $C>\supp(x)\cup A\cup B$ such that $\vert C\vert=\vert A_0\vert$. Hence,
	\begin{eqnarray}\label{two}
	\nonumber\Vert\one_{\e A_0}\Vert&\leq&\Delta\Vert\one_C\Vert\leq\frac{\Delta}{2}(\Vert x+\one_{\e'B_1}+\one_C\Vert+\Vert x+\one_{\e'B_1}-\one_C\Vert)\\
	\nonumber&\leq& \Delta\max\lbrace\Vert x+\one_{\e'B_1}+\one_C\Vert,\Vert x+\one_{\e' B_1}-\one_C\Vert\rbrace\\
	&\leq& \Delta^2\Vert x+\one_{\e' B_1}+\one_{\e'B_0}\Vert=\Delta^2\Vert x+\one_{\e' B}\Vert.
	\end{eqnarray}
	Thus,  the same argument for \eqref{two} can be used to estimate $\Vert\one_{\e' B_0}\Vert$, and we obtain that
	\begin{eqnarray}\label{three}
	\max\lbrace\Vert \one_{\e A_0}\Vert,\Vert\one_{\e' B_0}\Vert\rbrace\leq \Delta^2 \Vert x+\mathbf{1}_{\e' B}\Vert.
	\end{eqnarray}
	
	To estimate $\Vert\one_{\e A_1}\Vert$, take now a set $F>\supp(x)\cup A\cup B\cup C$ such that $\vert F\vert+\vert A_1\vert=l n_i$, and write
	$$\one_{\e A_1}\pm\one_F=\sum_{j=1}^l \one_{\eta T_j},$$
	where $T_k\cap T_i=\emptyset$ for $i\neq k$, $\vert T_j\vert=n_i$ for all $j=1,...,l$ and $\eta$ the corresponding sign. Hence, since $\Vert\one_{\eta T_j}\Vert\leq \Delta\Vert\one_{C}\Vert$ for all $j=1,...,l$,
	\begin{eqnarray}\label{four}
	\nonumber\Vert\one_{\e A_1}\Vert&\leq& \frac{1}{2}(\Vert \one_{\e A_1}+\one_F\Vert+\Vert\one_{\e A_1}-\one_F\Vert)\\
	\nonumber&\leq&\max\lbrace\Vert\one_{\e A_1}+\one_F\Vert, \Vert\one_{\e A_1}-\one_F\Vert\rbrace\\
	&\leq& l\Delta\Vert \one_C\Vert\stackrel{\eqref{two}}{\leq}l\Delta^2\Vert x+\one_{\e' B}\Vert.
	\end{eqnarray}
	Applying \eqref{four} to estimate $\Vert\one_{\e' B_1}\Vert$, we obtain
	\begin{eqnarray}\label{five}
	\max\lbrace\Vert\one_{\e A_1}\Vert, \Vert \one_{\e' B_1}\Vert\rbrace \leq \Delta^2 \Vert x+\one_{\e' B}\Vert.
	\end{eqnarray}
	
	Thus, applying \eqref{three} and \eqref{five} in \eqref{one}, 
	$$\Vert x+\one_{\e A}\Vert \leq \mathbf (1+2\Delta^2 +
	2l \Delta^2)\Vert x+\mathbf{1}_{\e' B}\Vert,$$
	for sets $A$ and $B$ with cardinality equal to or greater than $n_1$. Assume now $\vert A\vert<n_1$. In that case,
	\begin{eqnarray*}
		\Vert x+\mathbf{1}_{\varepsilon A}\Vert&\leq& \Vert x+\mathbf{1}_{\varepsilon' B}\Vert +\Vert \mathbf{1}_{\varepsilon A}\Vert+\Vert\mathbf{1}_{\varepsilon' B}\Vert\\
		&\leq&\Vert x+\mathbf{1}_{\varepsilon' B}\Vert+2\alpha_1 (n_1-1)\\
		&\leq& (1+2(n_1-1)\alpha_1\alpha_2)\Vert x+\mathbf{1}_{\varepsilon 'B}\Vert.
	\end{eqnarray*}
	
	Thus, the basis is $\mathbf{C}$-symmetric for largest coefficients with $$\mathbf{C}\leq\max\lbrace 1+2(n_1-1)\alpha_1\alpha_2,\, 1+2\Delta^2(1+l)\rbrace.$$
\end{proof}

To close this section, we use $\n$-democracy and the $\n$-UL property as an alternative to the Schauder condition in \cite[Theorem 5.2]{BB} - where it is proven that if $\n$ has bounded quotient gaps, every $\n$-quasi-greedy Schauder basis is quasi-greedy - and we also obtain symmetry for largest coefficients.

\begin{proposition}\label{propositionndemULboundedgaps}Suppose $\mathbf{n}$ is a sequence with $l$-bounded quotient gaps, and $\B$ is a basis that is $\C_{q,t}$-$t$-$\n$-quasi-greedy and has the $\n$-UL-property with constants $\C_1$ and $\C_2$. Then, the following hold: 
\begin{enumerate}[i)]
\item \label{ndemocratic}If $\B$ is $\Delta_d$-$\mathbf{n}$-democratic, it is $\C$-$t$-quasi-greedy with  
$$
\C\le \max\{(n_1-1)\alpha_1\alpha_2, \C_{q,t}\left(1+\left(l-1\right)\C_1\C_2 \Delta_d\right)\},
$$
and is $\Delta$-symmetric for largest coefficients with 
$$
\Delta\le \max\lbrace 1+2(n_1-1)\alpha_1\alpha_2, 1+2(1+l)(1+\C_{q,t}(1+\C_1\C_2\Delta_d))^2 \rbrace.
$$

\item \label{nsuperdemocratic}
If $\B$ is $\Delta_s$-$\mathbf{n}$-superdemocratic, it is $\C$-$t$-quasi-greedy with  
$$
\C\le \max\{(n_1-1)\alpha_1\alpha_2, \C_{q,t}\left(1+\left(l-1\right)\C_1 \Delta_s\right)\},
$$
and is $\Delta$-symmetric for largest coefficients with 
$$
\Delta\le \max\lbrace 1+2(n_1-1)\alpha_1\alpha_2, 1+2(1+l)(1+\C_{q,t}(1+\Delta_s))^2 \rbrace.
$$
\end{enumerate}
\end{proposition}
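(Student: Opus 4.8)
The plan is to derive the two conclusions — namely that $\B$ is $\C$-$t$-quasi-greedy for the stated $\C$, and that it is $\Delta$-symmetric for largest coefficients — by first proving the $t$-quasi-greediness estimate, then feeding it into the machinery already developed in this section. The starting point is that $\B$ is $\C_{q,t}$-$\n$-$t$-quasi-greedy, which by Proposition~\ref{propositioncglcbqg} (via the remark that $\C_{ql}\le \C_{q,t}$) already gives $\C'$-quasi-greediness for largest coefficients with $\C'\le \max\{(n_1-1)\alpha_1\alpha_2, l\C_{q,t}\}$; however we want the sharper bound involving $\Delta_d$ (resp.\ $\Delta_s$), so instead I would run the argument directly. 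Fix $x\in\X$ and a $t$-greedy set $A$ of cardinality $n\notin\n$ for $x$, with greedy sum $\G^t_n(x)=\sum_{i\in A}\be^*_i(x)\be_i$. If $n<n_1$, the trivial estimate $\|\G^t_n(x)\|\le n_1\alpha_1\cdot\max_i|\be^*_i(x)|$ together with a lower bound on $\|x\|$ coming from any single coordinate in $A$ handles this case (contributing the $(n_1-1)\alpha_1\alpha_2$ term). Otherwise set $k_0:=\max\{k: n_k<n\}$, pick $A_1\subset A$ with $|A_1|=n_{k_0}\in\n$, and write $\G^t_n(x)=\G^t_{n_{k_0}}(x)+\sum_{i\in A\setminus A_1}\be^*_i(x)\be_i$, where $\G^t_{n_{k_0}}(x)=\sum_{i\in A_1}\be^*_i(x)\be_i$ is an $n_{k_0}$-term $t$-greedy sum and hence has norm $\le \C_{q,t}\|x\|$.

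The point of the $\n$-UL property and $\n$-democracy is to control the ``tail'' $\sum_{i\in A\setminus A_1}\be^*_i(x)\be_i$, which has at most $(l-1)n_{k_0}$ terms, each coefficient of modulus at most $\min_{i\in A_1}|\be^*_i(x)|$ (since $A$ is a greedy set and $A_1$ can be chosen to consist of the largest coefficients). Using the upper $\n$-UL inequality on a superset of $A\setminus A_1$ of cardinality in $\n$ — or more directly, partitioning $A\setminus A_1$ into at most $l-1$ blocks of size $n_{k_0}$ and applying the upper UL bound to each — gives $\|\sum_{i\in A\setminus A_1}\be^*_i(x)\be_i\|\le (l-1)\C_2\min_{i\in A_1}|\be^*_i(x)|\,\|\bff_{A_1'}\|$ for an appropriate size-$n_{k_0}$ set; then $\n$-democracy ($\Delta_d$) lets me replace $\|\bff_{A_1'}\|$ by $\Delta_d\|\bff_{A_1}\|$, and the lower $\n$-UL inequality ($\C_1$) gives $\min_{i\in A_1}|\be^*_i(x)|\,\|\bff_{A_1}\|\le \C_1\|\G^t_{n_{k_0}}(x)\|\le \C_1\C_{q,t}\|x\|$. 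Collecting these estimates produces $\|\G^t_n(x)\|\le \C_{q,t}(1+(l-1)\C_1\C_2\Delta_d)\|x\|$, which is the claimed bound for part~\ref{ndemocratic}. For part~\ref{nsuperdemocratic}, $\n$-superdemocracy lets me skip one application of $\n$-unconditionality for constant coefficients (superdemocracy already absorbs the sign change), so only one factor of $\C_1$ survives and we get $\C_{q,t}(1+(l-1)\C_1\Delta_s)$.

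For the symmetry-for-largest-coefficients conclusion, I would not re-prove anything: once we know $\B$ is $\C$-$t$-quasi-greedy with the $\C$ just obtained, it is in particular $\C$-$\N$-$t$-quasi-greedy, hence $\C$-quasi-greedy for largest coefficients (the remark after Definition~\ref{definitionnQGlc} with $\n=\N$), so $\C_{ql}\le \C$. Here $\C=1+\C_{q,t}(1+\C_1\C_2\Delta_d)$ matches the quantity appearing inside the stated bound for $\Delta$ — note the bound in the theorem uses $1+\C_{q,t}(1+\C_1\C_2\Delta_d)$ rather than $\max\{(n_1-1)\alpha_1\alpha_2,\dots\}$, so I would actually track the ``large cardinality'' estimate $\C_{ql}\le \C_{q,t}(1+(l-1)\C_1\C_2\Delta_d)+$ (a $1$ from the $t$-greedy-to-largest-coefficients passage, cf.\ Proposition~\ref{propositionequivnslcnsdnqglc}) to land on exactly $1+\C_{q,t}(1+\C_1\C_2\Delta_d)$. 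Meanwhile $\B$ is also $\n$-democratic, hence (combining $\n$-democracy with $\n$-unconditionality for constant coefficients, which follows from $\n$-quasi-greediness for largest coefficients, itself implied by $\n$-$t$-quasi-greediness) $\n$-superdemocratic; so by Proposition~\ref{propositionequivnslcnsdnqglc} $\B$ is $\Delta_0$-$\n$-symmetric for largest coefficients with $\Delta_0\le 1+\C_{ql}(1+\Delta_s^{(\n)})\le 1+\C_{q,t}(1+\C_1\C_2\Delta_d)$ after bounding the $\n$-superdemocracy constant appropriately. Finally, Theorem~\ref{theoremnsymlc} upgrades $\n$-symmetry for largest coefficients (for $\n$ with $l$-bounded quotient gaps) to genuine symmetry for largest coefficients with constant $\max\{1+2(n_1-1)\alpha_1\alpha_2,\,1+2\Delta_0^2(1+l)\}$, which is exactly the stated $\Delta$-bound. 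Part~\ref{nsuperdemocratic} is identical with $\Delta_d$ replaced by $\Delta_s$ and one fewer factor of $\C_1$ in the interior constant.

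The main obstacle I anticipate is bookkeeping of constants so that the interior quantity comes out to be \emph{exactly} $1+\C_{q,t}(1+\C_1\C_2\Delta_d)$ (resp.\ with $\Delta_s$) rather than something slightly larger: in particular one must be careful that the passage from a size-$n$ greedy set to a size-$n_{k_0}$ one, the count of at most $l-1$ extra blocks, and the single ``$+1$'' from converting $t$-quasi-greediness into quasi-greediness for largest coefficients all compose without introducing an extra $l$ or an extra $\C_2$. The conceptual content is entirely contained in Lemma~\ref{lemmakey}, Proposition~\ref{propositioncglcbqg}, Proposition~\ref{propositionequivnslcnsdnqglc} and Theorem~\ref{theoremnsymlc}; the work is to chain them with the UL property supplying the quantitative link between $\min_{i\in A_1}|\be^*_i(x)|\,\|\bff_{A_1}\|$ and $\|\G^t_{n_{k_0}}(x)\|$.
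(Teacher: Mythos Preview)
Your approach is essentially the paper's: the $t$-quasi-greedy bound is obtained exactly as you describe (partition $A$ into a greedy subset $A_1$ of size $n_{k_0}$ plus at most $l-1$ remainder blocks, enlarge each block to a set $D_i$ of size $n_{k_0}$, then chain upper $\n$-UL, $\n$-democracy, and lower $\n$-UL to compare each $\|P_{A_i}(x)\|$ with $\|P_{A_1}(x)\|$), and the symmetry-for-largest-coefficients conclusion is obtained by feeding the $\n$-constants into Proposition~\ref{propositionequivnslcnsdnqglc} and then Theorem~\ref{theoremnsymlc}.

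Two small clarifications. First, for the SLC part the paper avoids your detour through the \emph{full} $t$-quasi-greedy constant: it uses directly that $\C_{ql}\le\C_{q,t}$ (the $\n$-QGLC constant, from the remark after Definition~\ref{definitionnQGlc}) and that the $\n$-UL property by itself gives $\Delta_s^{(\n)}\le\C_1\C_2\Delta_d$ via $\|\bff_{\bfe A}\|\le\C_2\|\bff_A\|\le\C_2\Delta_d\|\bff_B\|\le\C_1\C_2\Delta_d\|\bff_{\bfe'B}\|$; this lands immediately on $\Delta_0\le 1+\C_{q,t}(1+\C_1\C_2\Delta_d)$. Second, in part~\ref{nsuperdemocratic} the interior SLC constant drops \emph{both} $\C_1$ and $\C_2$ (since $\Delta_s^{(\n)}=\Delta_s$ is given by hypothesis), not just one factor as you wrote; only the $t$-quasi-greedy bound retains a single $\C_1$.
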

\begin{proof}
\ref{ndemocratic} Fix $x\in \X$ and $A$ a $t$-greedy set for $x$ with $\left|A\right|\not\in \n$. If $\left|A\right|<n_1$, then 
$$
\left|\left|P_A(x)\right|\right|\le \sum_{i\in A}\left|\be_i^*(x)\right|\left|\left|\be_i\right|\right|\le \alpha_1\alpha_2 (n_1-1)\|x\|.
$$
If $|A|> n_1$, define
	$$
	k_0:=\max_{k\in \N}\{n_k< |A|\}, 
	$$
and let $\left\lbrace A_i\right\rbrace_{1\le i\le j}$ be a partition of $A$ such that $2\le j\le l$, $A_1$ is an $n_{k_0}$-greedy set for $P_A\left(x\right)$, and $\left|A_i\right|\le  n_{k_0}$ for all $2\le i\le j$. Since $A_1$ is a $t$-greedy set for $x$ of cardinality $n_{k_0}$, we have
\begin{equation}
\left|\left|P_{A_1}(x)\right|\right|\le \C_{q,t}\|x\|.\label{tgreedysetA1}
\end{equation}
For every $2\le i\le j$, choose $ A_i\subset D_i$ such that $\left|D_i\right|=n_{k_0}$. Given that for every $2\le i\le j$, 
$$
\max_{m\in A_i}\left|\be_m^*(x)\right|\le \min_{m\in A_1}\left|\be_m^*(x)\right|,
$$
using convexity and the $\n$-UL and the $\n$-democracy properties we obtain
\begin{eqnarray}
\left|\left|P_{A_i}(x)\right|\right|&\le &\max_{m\in A_i}\left|\be_m^*(x)\right|\sup_{\bfe\in \Psi_{A_i}}\left|\left|\bff_{\bfe A_i}\right|\right|\le \min_{m\in A_1}\left|\be_m^*(x)\right|\sup_{\bfe\in \Psi_{D_i}}\left|\left|\bff_{\bfe D_i}\right|\right|\nonumber\\
&\le&  \min_{m\in A_1}\left|\be_m^*(x)\right|\C_2\left|\left|\bff_{ D_i}\right|\right|\le \C_2\Delta_d \min_{m\in A_1}\left|\be_m^*(x)\right|\left|\left|\bff_{A_1}\right|\right|\nonumber\\
&\le& \C_1\C_2 \Delta_d \left|\left|P_{A_1}(x)\right|\right|.\label{ndemb}
\end{eqnarray}
Combining this result with \eqref{tgreedysetA1} and using the triangle inequality, we get 
$$
\left|\left|P_{A}(x)\right|\right|\le \sum_{i=1}^{j}\left|\left|P_{A_i}(x)\right|\right|\le \C_{q,t}\left(1+\left(l-1\right)\C_1\C_2 \Delta_d\right)\|x\|. 
$$
This proves that $\B$ is $t$-quasi-greedy with constant as in the statement. To prove that it is symmetric for largest coefficients, we apply Proposition~\ref{propositionequivnslcnsdnqglc} and  Theorem~\ref{theoremnsymlc}, considering that $\B$ is $\C_{ql}$-$\n$-quasi-greedy for largest coefficients and $\Delta_s$-$\n$-superdemocratic, with $\C_{ql}\le \C_{q,t}$, and $\Delta_s\le \C_1\C_2\Delta_d$. \\
\ref{nsuperdemocratic} This is proven by essentially the same argument as the previous case. The only differences are that instead of \eqref{ndemb}, we obtain
 \begin{eqnarray*}
\left|\left|P_{A_i}(x)\right|\right|&=&\max_{m\in A_i}\left|\be_m^*(x)\right|\sup_{\bfe\in \Psi_{A_i}}\left|\left|\bff_{\bfe A_i}\right|\right|\le \min_{m\in A_1}\left|\be_m^*(x)\right|\sup_{\bfe\in \Psi_{D_i}}\left|\left|\bff_{\bfe D_i}\right|\right|\\
&\le&  \min_{m\in A_1}\left|\be_m^*(x)\right|\Delta_s\left|\left|\bff_{A_1}\right|\right|\\
&\le& \C_1 \Delta_s \left|\left|P_{A_1}(x)\right|\right|
\end{eqnarray*}
(and thus, we also get $\Delta_s$ instead of $\C_2\Delta_d$ in the upper bound for $\C$), and that we apply Proposition~\ref{propositionequivnslcnsdnqglc} using the hypothesis that $\B$ is $\Delta_s$-$\n$-superdemocratic. 
\end{proof}

\section{Examples}\label{sectionexamples}

In this section, we consider two families of examples that are used throughout the paper, and study the relevant properties of the bases. First, we construct a family of examples that proves that for sequences with arbitrarily large additive gaps, $\n$-unconditionality for constant coefficients, the $\n$-UL property,  $\n$-(super)democracy and $\n$-(super)-conservativeness are not equivalent to their standard counterparts.

\begin{example}\label{examplendemocracylargelineargaps} Given $\n$ with arbitrarily large additive gaps, choose recursively a subsequence $(n_{k_i})_{i\in \N_{0}}$ and $(m_i)_{i\in\N}$ a sequence of positive integers with $m_1>4$ so that for every $i\in \N$, 
\begin{equation}
m_i n_{{k_i}+1}^3<m_{i+1}, \qquad m_i^2< n_{k_{i}} \qquad n_{k_i}+2m_i<n_{k_{i}+1} \qquad\text{and}\qquad n_{{k_i}+1}^2<n_{k_{i+1}}, \label{recursive}
\end{equation}
and choose a sequence of sets of positive integers $(B_i)_{i\in \N}$ so that 
$$
m_i<B_{i}<B_{i+1}\qquad\text{ and}\qquad |B_i|=_{n_{k_i}+m_i}\qquad\forall i\in \N. 
$$
For each $i\in\N$, define
$$
\mathcal{F}_{i}:=\left\lbrace f=(f_j)_{j\in \N}\colon |\supp(f)|\le n_{{k_i}+1},\;\|f\|_{\infty}\le 1,\; \|f\|_1\le  m_i,\;\sum_{j\in B_i}f_j=0\right\rbrace
$$
and, for $(a_j)_{j\in\N}\in \mathtt{c}_{00}$, 
$$
\left\Vert (a_j)_{j\in \N}\right\Vert_{\diamond,i}:=\frac{1}{n_{{k_{i-1}+1}}^2}\sup_{f\in \mathcal{F}_i}\left\vert \sum_{j\in \N}f_ja_j\right\vert.
$$
Let $\X$ be the completion of $\mathtt{c}_{00}$ with the norm
$$
\|x\|=\max\left\lbrace\|x\|_{\infty},\|x\|_{\diamond}:=\sup_{i\in \N}\|x\|_{\diamond,i}\right\rbrace, 
$$
and let $\B$ be the canonical unit vector system of $\X$. Then, the following hold: 
\begin{enumerate}[a)]
\item \label{markushevich4}$\B$ is a normalized Markushevich basis for $\X$ with normalized biorthogonal funcitionals $\B^*$. 
\item \label{nsuperdemnew} $\B$ is $\C$-$\n$-superdemocratic, with $\C\le 2$. 
\item \label{nULnew} $\B$ has the $\n$-UL property, with $\max\{\C_1,\C_2\}\le 2$
\item \label{notucc} $\B$ is not $\n$-suppression unconditional for constant coefficients, and thus not unconditional for constant coefficients. 
\item \label{notconservative}$\B$ is not conservative. 
\end{enumerate}
\end{example}
\begin{proof}
\noindent\textbf{ Step \ref{markushevich4}:} It is clear that $\B$ and $\B^*$ are normalized. To see that $\B$ is a Markushevich basis, fix $x\in \X$ such that $\be_j^*(x)=0$ for all $j\in \N$, and choose a sequence $(x_l)_{l\in \N}$ with $x_l\in [\be_j: 1\le l\le s(l)]$ for some $s(l)\in \N$, and 
$$
x_l\xrightarrow[l\to \infty]{}x.
$$
Given $\nu>0$, choose $l_0\in \N$ so that 
$$
\|x_l-x\|\le \nu \qquad\forall l\ge l_0. 
$$
Now pick $i_0$ and $f\in \mathcal{F}_{i_0}$ so that 
$$
\left \Vert x_{l_0}\right\Vert_{\diamond} \le \nu+\|x_{l_0}\|_{\diamond,i_0}\le 2\nu + \sum_{j\in \N}f_j\be_j^*\left(x_{l_0}\right).
$$
Since $f$ has finite support and $\be_j^*(x)=0$ for all $j\in \N$, there is $l_1>l_0$ such that
\begin{equation}
\sum_{1\le j\le \max(\supp(f))+s\left(l_0\right)}\left \vert \be_j^*\left(x_{l_1}\right) \right\vert \le \nu. \label{lessthannu}
\end{equation}

Hence, 
$$
\sum_{j \in \N}f_j\be_j^*\left(x_{l_0}\right)\le \left\vert \sum_{j \in \N}f_j\be_j^*\left(x_{l_1}\right)\right\vert+  \left\vert \sum_{ j\in \N}f_j\be_j^*\left(x_{l_0}-x_{l_1}\right)\right\vert\le \nu +\left\Vert x_{l_0}-x_{l_1} \right\Vert\le 2\nu. 
$$
It follows that
$$
\left\Vert x_{l_0}\right\Vert _{\diamond}\le 4\nu. 
$$
Also by \eqref{lessthannu}, 
$$
\|x_{l_0}\|_{\infty}=\sup_{j\in \supp\left(x_{l_0}\right)}\left\vert \be_j^*\left(x_{l_0}\right) \right\vert\le \sup_{j\in \supp\left(x_{l_0}\right)}\left\vert \be_j^*\left(x_{l_1}\right) \right\vert +\sup_{j\in \supp\left(x_{l_0}\right)}\left\vert \be_j^*\left(x_{l_1}-x_{l_0}\right) \right\vert \le 3\nu. 
$$
We deduce that 
$$
\left\Vert x\right\Vert \le \nu +\left\Vert x_{l_0}\right\Vert \le 5\nu.
$$
Since $\nu$ is arbitrary, this entails that $x=0$ and completes the proof of \ref{markushevich4}. \\
To prove the rest of the statements, first we show the following: 

\begin{enumerate}[i.]

\item \label{ULpar1} For all $i\in \N$, all sets $A\subset \N$ with $1 \le |A|\le n_{{k_i}+1}$ and all scalars $(a_j)_{j\in A}$, 
\begin{equation}
\left\Vert \sum_{j\in A}a_j\be_j \right\Vert_{\diamond}\le \max_{j\in A}|a_j|\frac{m_i}{n_{{k_{i-1}}+1}^2}.\nonumber
\end{equation}

\item \label{ULpar1.5} For all $i\in \N$, all sets $A\subset \N$ with $m_i \le |A|\le n_{{k_i}}$ and all scalars $(a_j)_{j\in A}$, 
\begin{equation}
\left\Vert \sum_{j\in A}a_j\be_j \right\Vert_{\diamond}\ge \frac{1}{2} \min_{j\in A}|a_j|\frac{m_i}{n_{{k_{i-1}}+1}^2}.\nonumber
\end{equation}

\item \label{ULpar2} For all $i\in \N_{\ge 2}$, all sets $A\subset \N$ with $n_{{k_{i-1}}+1}\le |A|\le m_{i}$ and all scalars $(a_j)_{j\in A}$, 
\begin{equation*}
\min_{j\in A}|a_j| \max\left\lbrace \frac{|A|}{2 n_{{k_{i-1}}+1}^2},\frac{m_{i-1}}{n_{{k_{i-2}}+1}^2}  \right\rbrace\le \left\Vert \sum_{j\in A}a_j\be_j \right\Vert_{\diamond}\le \max_{j\in A}|a_j| \max\left\lbrace \frac{|A|}{ n_{{k_{i-1}}+1}^2},\frac{m_{i-1}}{n_{{k_{i-2}}+1}^2}  \right\rbrace.
\end{equation*}
\item \label{ULpar3} For all $A\subset \N$ with $1\le |A|\le m_1$ and all scalars $(a_j)_{j\in A}$,  
\begin{equation*}
\min_{j\in A}|a_j|\frac{|A|}{2 n_{{k_{0}}+1}^2}\le \left\Vert \sum_{j\in A}a_j\be_j \right\Vert_{\diamond}\le \max_{j\in A}|a_j| \frac{|A|}{ n_{{k_{0}}+1}^2}.
\end{equation*}

\end{enumerate}
To prove \ref{ULpar1}, suppose first that $l>i$ and $f\in \mathcal{F}_{l}$. By \eqref{recursive} we get
\begin{equation*}
\frac{1}{{n_{{k_{{l-1}}+1}}^2}}\left\vert \sum_{j\in A}f_j a_j\right\vert\le \max_{j\in A}|a_j| \frac{|A|}{{n_{{k_{{l-1}}+1}}^2}}\le  \max_{j\in A}|a_j| \frac{n_{{k_i}+1}}{{n_{{k_{{i}}+1}}^2}}\le \max_{j\in A}|a_j|\frac{m_i}{n_{{k_{i-1}+1}}^2}. 
\end{equation*}
Similarly, for each $l<i$ and each $f\in \mathcal{F}_{l}$ we have 
\begin{equation*}
\frac{1}{n_{{k_{l-1}+1}}^2}\left\vert \sum_{j\in A}f_j a_j\right\vert\le \max_{j\in A}|a_j| \frac{m_l}{n_{{k_{l-1}+1}}^2}\le \max_{j\in A}|a_j|\frac{m_i}{n_{{k_{i-1}+1}}^2}. 
\end{equation*}
Since, for $f\in \mathcal{F}_i$,
\begin{equation*}
\frac{1}{n_{{k_{i-1}+1}}^2}\left\vert \sum_{j\in A}f_j a_j\right\vert\le \max_{j\in A}|a_j|\frac{1}{n_{{k_{i-1}+1}}^2}\sum_{j\in \N}|f_j|\le \max_{j\in A}|a_j|\frac{m_i}{n_{{k_{i-1}}+1}^2},
\end{equation*}
taking supremum we complete the proof of~\ref{ULpar1}.\\
Next, we prove \ref{ULpar1.5}:  Assume $a_j\not=0$ for all $j\in A$, choose $A_1 \subset A$ and $A_2\subset B_i\setminus A$ with $|A_1|=|A_2|=m_i$, and let 
\begin{equation*}
f_j:=\begin{cases}
\frac{|a_j|}{2 a_j}& \text{if } j\in A_1;\\
-\frac{1}{m_i} \sum_{l\in A_1\cap B_i}f_l & \text{if } j\in A_2;\\
0& \text{in any other case.}
\end{cases}
\end{equation*}
Then $f=(f_j)_{j\in \N}\in \mathcal{F}_i$, and 
$$
\left\Vert \sum_{l\in A}a_l\be_l\right\Vert_{\diamond}\ge \frac{1}{n_{{k_{i-1}+1}}^2}\left\vert  \sum_{j\in \N}f_j\be_j^*\left(\sum_{l\in A}a_l\be_l\right)\right\vert =\frac{1}{2 n_{{k_{i-1}+1}}^2}\sum_{j\in A_1}|a_j|\ge \frac{m_i}{2n_{{k_{i-1}+1}}^2} \min_{j\in A}|a_j|.
$$

To prove \ref{ULpar2}, by a density argument we may assume $a_j\not=0$ for all $j\in A$. For every $l\ge i$ and every $f\in \mathcal{F}_l$,
\begin{equation*}
\frac{1}{n_{{k_{l-1}+1}}^2}\left\vert \sum_{j\in A}f_j a_j\right\vert\le \max_{j\in A}|a_j|\frac{|A|}{n_{{k_{i-1}+1}}^2}.
\end{equation*}
Hence, 
\begin{equation}
\sup_{l\ge i}\left\Vert \sum_{j\in A}a_j\be_j\right\Vert_{\diamond,l}\le \max_{j\in A}|a_j|\frac{|A|}{n_{{k_{i-1}+1}}^2}.\label{lgei}
\end{equation}
Now pick $B\subset B_i\setminus A$ with $|B|=|A|$, and define \begin{equation*}
f_j=\begin{cases}
\frac{|a_j|}{2 a_j}& \text{if } j\in A;\\
-\frac{1}{ |A|}\sum_{l\in A \cap B_i}f_l &\text{if } j\in B;\\
0 & \text{in any other case.} 
\end{cases}
\end{equation*}
Then $f=(f_j)_{j\in \N}\in \mathcal{F}_i$, and 
$$
\sum_{j\in\N}f_j\be_j^*\left(\sum_{l\in A}a_l\be_l\right)= \sum_{j\in A}f_j a_j=\frac{1}{2}\sum_{j\in A}|a_j|. 
$$
It follows from this and \eqref{lgei} that 
\begin{equation}
\frac{|A|}{2 n_{{k_{i-1}}+1}^2}\min_{j\in A}|a_j|\le \frac{1}{2 n_{{k_{i-1}}+1}^2}\sum_{j\in A}|a_j|\le \sup_{l\ge i}\left\Vert  \sum_{j\in A}f_j \be_j\right\Vert_{\diamond,i}\le  \max_{j\in A}|a_j|\frac{|A|}{n_{{k_{i-1}+1}}^2}. \label{lgeib}
\end{equation}

On the other hand, if $1\le l<i$, using \eqref{recursive} we obtain
\begin{eqnarray}
\frac{1}{n_{{k_{l-1}+1}}^2}\left\vert \sum_{j\in A}f_j a_j\right\vert&\le& \max_{j\in A}|a_j|\frac{m_l}{n_{{k_{l-1}+1}}^2}\le \max_{j\in A}|a_j|\frac{m_{i-1}}{n_{{k_{i-2}+1}}^2}.\label{llei}
\end{eqnarray}

Now pick $A_1\subset A\setminus B_{i-1} $ with $|A_1|=m_{i-1}$, and set
\begin{equation*}
f_j=\begin{cases}
\frac{|a_j|}{ a_j}& \text{if } j\in A_1;\\
0 & \text{otherwise.} 
\end{cases}
\end{equation*}
Then $f=(f_j)_{j\in \N}\in \mathcal{F}_{i-1}$, and 
$$
\sum_{j\in\N}f_j\be_j^*\left(\sum_{l\in A}a_l\be_l\right)= \sum_{j\in A}f_j a_j=\sum_{j\in A_1}|a_j|\ge m_{i-1}\min_{j\in A}|a_j|
$$
which, when combined with \eqref{llei} gives 
\begin{equation*}
\min_{j\in A}|a_j|\frac{m_{i-1}}{n_{{k_{i-2}+1}}^2}\le  \sup_{1\le l<i}\left\Vert  \sum_{j\in A}f_j \be_j\right\Vert_{\diamond,i}\le  \max_{j\in A}|a_j|\frac{m_{i-1}}{n_{{k_{i-2}+1}}^2}.
\end{equation*}
The proof of \ref{ULpar2} is now completed combining the above inequality with \eqref{lgeib}, whereas \ref{ULpar3} is proven by the same argument that gives \eqref{lgeib}. \\
\noindent\textbf{Step \ref{nsuperdemnew} $\n$-superdemocracy:}, fix $A,B\subset\N$ with $|A|=|B|=n\in \n$, and $\varepsilon\in A$, $\varepsilon'\in B$. Then $\|\bff_{\bfe A}\|\le 2\|\bff_{\bfe' B}\|$ is obtained as follows: 
\begin{itemize}
\item If there is $l\in \N$ such that $n_{k_{l}+1}\le n\le m_{l+1}$, apply \ref{ULpar2} with $i=l+1$. 
\item If there is $l\in \N$ such that $m_l\le n\le n_{k_l}$, combine \ref{ULpar1} and  \ref{ULpar1.5}.  
\item If $n\le m_1$, apply \ref{ULpar3}. 
\end{itemize}
\noindent\textbf{Step \ref{nULnew} $\n$-UL property:} This is proven in the same manner as Step \ref{nsuperdemnew}. \\

\noindent\textbf{Step \ref{notucc}  $\n$-suppression unconditionality for constant coefficients:} Fix $i>1$, and choose sets $ D_i\subset B_i$ with $|D_i|=n_{k_{i}}$. Then by \ref{ULpar1.5}, 
\begin{equation}
\|\bff_{D_i}\|\ge \frac{m_i}{2n_{{k_{i-1}+1}}^2}. \label{Bidifferentsigns}
\end{equation} 
Let us show that
\begin{equation}
\|\bff_{B_i}\|\le \frac{m_{i-1}}{n_{{k_{i-2}+1}}^2}.\label{Bismallnorm}
\end{equation}
For $1\le l< i$ and $f\in \mathcal{F}_l$, we have 
$$
\frac{1}{n_{{k_{l-1}+1}}^2}\left\vert \sum_{j\in \N}f_j\be_j^*\left(\bff_{B_i}\right)\right\vert\le\frac{m_l}{n_{{k_{l-1}+1}}^2}.
$$
Hence,
\begin{equation*}
\sup_{1\le l<i}\|\bff_{B_i}\|_{\diamond,i}\le \max_{1\le l\le i-1}\frac{m_l}{n_{{k_{l-1}+1}}^2}=\frac{m_{i-1}}{n_{{k_{i-2}+1}}^2} \qquad\text{ (by \eqref{recursive})}.
\end{equation*}
On the other hand, for $l>i$ and $f\in \mathcal{F}_l$, 
$$
\frac{1}{n_{{k_{l-1}+1}}^2}\left\vert \sum_{j\in \N}f_j\be_j^*\left(\bff_{B_i}\right)\right\vert\le \frac{|B_i|}{n_{{k_{l-1}+1}}^2}\le \frac{1}{n_{k_{l-1}+1}}\le 1. 
$$
Thus, 
\begin{equation*}
\sup_{l>i}\|\bff_{B_i}\|_{\diamond,i}\le 1. 
\end{equation*}
Given that by construction $\|\bff_{B_i}\|_{\diamond,i}=0$, \eqref{Bismallnorm} is proven, and it follows from that, \eqref{recursive} and \eqref{Bidifferentsigns} that 
$$
\frac{\|\bff_{D_i}\|}{\|\bff_{B_i}\|}\xrightarrow[i\to \infty]{}\infty, 
$$
so $\B$ is not $\n$-suppression unconditional for constant coefficients. \\
\textbf{Step \ref{notconservative} conservativeness: } For each $i\ge 2$, choose $E_i<B_i$ with $|E_i|=m_i$. By \ref{ULpar2}, 
$$
\|\bff_{E_i}\|\ge \frac{m_i}{2 n_{{k_{i-1}+1}}^2}.
$$
From this, \eqref{recursive} and \eqref{Bismallnorm} it follows that 
$$
\frac{\|\bff_{E_i}\|}{\|\bff_{B_i}\|}\xrightarrow[i\to \infty]{}\infty, 
$$
so $\B$ is not conservative.

\end{proof}

Next, we consider a family of examples from \cite[Proposition 3.1]{O2015}, with a slight modification for our purposes. 
\begin{example}\label{examplendemocratic}
Suppose $\n$ has arbitrarily large gaps, write $\n=(n_k)_{k=1}^\infty$ and find $k_1<k_2<...$ such that the sequence $(n_{k_i+1}/n_{k_i})_{i=1}^\infty$ increases without a bound and $n_{k_1}>4$. For $i\in\mathbb N$, write
	$$c_i=\left(\frac{n_{k_i+1}}{n_{k_i}}\right)^{1/4},\qquad m_i=\floor{\sqrt{n_{k_i+1}n_{k_i}}}.$$
	Let $\tilde{m}_i = \sum_{j<i}m_i$ (so that $\tilde{m}_1=0$ and $\tilde{m}_{i+1}=\tilde{m}_i+m_i$ for $i\geq 1$), $\beta_i:=\floor{\frac{m_i}{2}}$, and  let $\X$ be completion of $\mathtt{c}_{00}$ with the norm:  
		
	$$\left\Vert\sum_{j}a_j\be_j\right\Vert = \max\left\lbrace \Vert (a_j)_j\Vert_2, \sup_{i\in\mathbb N}\frac{c_i}{\sqrt{m_i}}\max_{i\leq l\leq m_i}\left\vert \sum_{j=\tilde{m}_i+1}^{\tilde{m}_i+l}(-1)^{\theta(j)}a_j\right\vert\right\rbrace,$$
where 
$$
\theta(j)=\begin{cases} 2j & \text{ if }\widetilde{m}_i+1\le j\le \widetilde{m}_i+\beta_i\\
j& \text{ if } \widetilde{m}_i+\beta_i+1\le j \le \widetilde{m}_i+m_i. 
\end{cases}
$$	
The unit vector basis $\mathcal B=(\be_i)_{i\in\mathbb N}$ is a monotone Schauder basis with the following properties. 
\begin{enumerate}[a)]
	\item \label{aex1}$\B$ is $\n$-$t$-quasi-greedy with $\C_{q,t}\le\frac{2}{t}$ for all $0<t\le 1$, and not quasi-greedy.
		\item  \label{bex1} $\B$ is $\Delta_s$-$\n$-superdemocratic with $\Delta_s \le \sqrt{2}$. 	
	\item  \label{dex1} $\B$ is $\Delta$-$\n$-symmetric for largest coefficients with $\Delta\le 3+2\sqrt{2}$.
		\item  \label{eex1} $\B$ has the $\n$-UL property with $\max\{\C_1,\C_2\}\le \sqrt{2}$. 
	
	\item  \label{fex1} $\B$ is not conservative.

\item \label{hex1} $\B$ is not unconditional for constant coefficients. Hence, it does not have the UL property. 
\end{enumerate}
\end{example}

\begin{proof}
It is clear from the definition that $\B$ is a monotone Schauder basis. \\
\noindent\textbf{Step \ref{aex1} $\n$-quasi-greediness:} This was proven in \cite[Proposition 3.1]{O2015}: The only modification introduced in our construction is that for some $j\in \N$, $\be_j$ is replaced with $-\be_j$, and it is clear that this change does not affect the $\n$-quasi-greedy or quasi-greedy properties.  \\
	\noindent\textbf{Step \ref{bex1} $\n$-superdemocracy:} Note that for every $m\in \N$, $2\floor*{\sqrt{m}}\ge \sqrt{m}$, so 
	\begin{equation}
	\frac{1}{\sqrt{\floor*{\sqrt{m}}}}=\frac{\sqrt{2}}{\sqrt[4]{m}}.\nonumber
	\end{equation}
	Now fix $B\subset \N$ with $|B|\in \mathbf{n}$, and  $\bfe \in \Psi_B$. For every $i\in \N$ with $|B|\le n_{k_i}$, we have
	\begin{eqnarray}
	\frac{c_i}{\sqrt{m_i}}\max_{1\le l\le m_i}\left|\sum\limits_{j=\widetilde{m}_i+1}^{\widetilde{m}_i+l }(-1)^{\theta(j)}\be_j^*(\mathbf{1}_{\bfe B})\right|&\le& \frac{c_i}{\sqrt{m_i}}|B|=\sqrt[4]{\frac{n_{k_i+1}}{n_{k_i}}}\frac{|B|}{\sqrt{\floor*{\sqrt{n_{k_i}n_{k_{i}+1}}}}}\nonumber\\
	&\le& \sqrt{2}\sqrt[4]{\frac{n_{k_i+1}}{n_{k_i}}}\frac{|B|}{\sqrt[4]{n_{k_i}n_{k_{i}+1}}}=\frac{\sqrt{2}|B|}{\sqrt{n_{k_i}}}\le \frac{\sqrt{2}|B|}{\sqrt{|B|}}\nonumber\\
	&=&\sqrt{2}\sqrt{|B|}.\label{biggeri}
	\end{eqnarray}
	On the other hand, if $|B|\ge n_{{k_i}+1}$, then 
	\begin{eqnarray}
	\frac{c_i}{\sqrt{m_i}}\max_{1\le k\le m_i}\left|\sum\limits_{j=\widetilde{m}_i+1}^{\widetilde{m}_i+l }(-1)^{\theta(j)}\be_j^*(\mathbf{1}_{\bfe B})\right|&\le& \frac{c_i}{\sqrt{m_i}}m_i= \sqrt[4]{\frac{n_{k_i+1}}{n_{k_i}}}\sqrt{\floor*{\sqrt{n_{k_i}n_{k_{i}+1}}}} \nonumber\\
	&\le& \sqrt[4]{\frac{n_{k_i+1}}{n_{k_i}}}\sqrt{\sqrt{n_{k_i}n_{k_{i}+1}}}=\sqrt{n_{k_i+1}}\le \sqrt{|B|}.\label{smalleri}
	\end{eqnarray}
	Taking supremum in \eqref{biggeri} and \eqref{smalleri} we get 
	\begin{equation}
	\|\mathbf{1}_{\bfe B}\|\le \sqrt{2}\sqrt{|B|}.\label{forbidem}
	\end{equation}
	As
	$$
	\|\mathbf{1}_{\bfe B}\|\ge \|\mathbf{1}_{\bfe B}\|_{2}=\sqrt{|B|},
	$$
	it follows that $\B$ is $\Delta_s$-$\n$-superdemocratic with $\Delta_s\le \sqrt{2}$.

	\noindent\textbf{Step \ref{dex1} $\n$-symmetry for largest coefficients:} It follows by \ref{aex1} that $\B$ is $\C_{ql}$-$\n$-quasi-greedy for largest coefficients with $\C_{ql}\le 2$. From that and \ref{bex1}, an application of Proposition~\ref{propositionequivnslcnsdnqglc} gives the desired result.

	\noindent\textbf{Step \ref{eex1} $\n$-UL property:} Fix $A\subset \N$ with $|A|\in \mathbf{n}$, and scalars $(a_i)_{i\in A}$. By convexity and using that the basis is $\sqrt{2}$-$\n$-superdemocratic, 
	\begin{equation}
	\|\sum\limits_{i\in A}a_i\be_i\|\le \max_{i\in A}|a_i|\max_{\bfe \in \Psi_A}\|\mathbf{1}_{\bfe A}\|\le \sqrt{2}\max_{i\in A}|a_i|\|\mathbf{1}_{A}\|.\label{rightside2}
	\end{equation}
	On the other hand, using \eqref{forbidem} we get
	\begin{eqnarray}
	\|\sum\limits_{i\in A}a_i\be_i\|&\ge& \|\sum\limits_{i\in A}a_i\be_i\|_2=\sqrt{\sum\limits_{i\in A}|a_i|^2}\ge \min_{j\in A}|a_j|\sqrt{|A|}\nonumber\\
	&\ge& \frac{1}{\sqrt{2}}\min_{i\in A}|a_i|\|\mathbf{1}_{A}\|. 
	\end{eqnarray}

	\noindent\textbf{Step \ref{fex1} conservativeness:} To see that $\B$ is not conservative, for each $i\in \N$ let 
	
		\begin{equation*}
	B_i:=\{\widetilde{m}_i+1,\dots, \widetilde{m}_i+\beta_i\}\qquad\text{and}\qquad D_i:=\{\widetilde{m}_i+\beta_i+1,\dots, \widetilde{m}_i+2\beta_i\}.
	\end{equation*}
	
	We have
	\begin{equation}
	\|\mathbf{1}_{B_i}\|\ge \left\vert \frac{c_i}{\sqrt{m_i}}\sum\limits_{j=\widetilde{m}_i+1}^{\widetilde{m}_i+\beta_i}(-1)^{\theta(j)}\be_j^*(\mathbf{1}_{B_i})\right\vert=\frac{c_i}{\sqrt{m_i}}\beta_i\ge \frac{c_i}{\sqrt{m_i}}\frac{m_i}{3}=\frac{c_i\sqrt{m_i}}{3}.\label{Bi}
	\end{equation}
On the other hand, for each $1\le l\le \beta_i$, 
$$
\sum\limits_{j=\widetilde{m}_i+1}^{\widetilde{m}_i+l}(-1)^{\theta(j)}\be_j^*(\mathbf{1}_{D_i})=0, 
$$
whereas for $\beta_{i}+1\le l\le m_i$, 
$$
\left|\sum\limits_{j=\widetilde{m}_i+1}^{\widetilde{m}_i+l}(-1)^{\theta(j)}\be_j^*(\mathbf{1}_{D_i})\right|= \left|\sum\limits_{j=\widetilde{m}_i+\beta_i+1}^{\widetilde{m}_i+l}(-1)^{j}\be_j^*(\mathbf{1}_{D_i})\right|\le 1. 
$$
Since 
$$
\sum\limits_{j=\widetilde{m}_{i'}+1}^{\widetilde{m}_{i'}+l}(-1)^{\theta(j)}\be_j^*(\mathbf{1}_{D_i})=0\qquad\forall i'\not=i \forall 1\le l\le m_{i'},
$$
we deduce that
\begin{equation}
\|1_{D_i}\|= \|1_{D_i}\|_2\le  \sqrt{\beta_i+1}\le \sqrt{m_i}.\label{notconservative4}
\end{equation}
Given that $(c_i)_i$ is unbounded, $B_i<D_i$ and $|B_i|\le |D_i|$ for all $i$, it follows from \eqref{Bi} and \eqref{notconservative4} that $\B$ is not conservative. \\

\noindent\textbf{Step \ref{hex1}  Unconditionality for constant coefficients:} This can be proven using the argument given in  \cite[Proposition 3.2]{O2015} to prove that the basis is not quasi-greedy. We give a proof for the sake of completion: Fix $i\in \N$, and consider again the set $B_i$. By \eqref{Bi}, we have
$$
\|\bff_{ B_i}\|\ge \frac{c_i\sqrt{m_i}}{3}.
$$
Now let $\varepsilon\in \Psi_{B_i}$ be a sequence of alternating signs. Then for all $1\le l\le m_i$ we have 
$$
\frac{c_i}{\sqrt{m_i}}\left|\sum\limits_{j=\widetilde{m}_i+1}^{\widetilde{m}_i+l}(-1)^{\theta(j)}\be_j^*(\mathbf{1}_{\bfe B_i})\right|=\frac{c_i}{\sqrt{m_i}}\left|\sum\limits_{j=\widetilde{m}_i+1}^{\widetilde{m}_i+\max\{l,\beta_i\}}\varepsilon_j\right|\le \frac{c_i}{\sqrt{m_i}}\le 2.
$$
As 
$$
\frac{c_{i'}}{\sqrt{m_{i'}}}\left|\sum\limits_{j=\widetilde{m}_l+1}^{\widetilde{m}_l+l}(-1)^{\theta(j)}\be_j^*(\mathbf{1}_{\bfe B_i})\right|=0 \qquad\forall i'\not=i\forall 1\le l\le m_{i'}, 
$$
it follows that 
$$
\|\bff_{\bfe B_i}\|=\|\bff_{\bfe B_i}\|_2=\sqrt{\beta_i}\le \sqrt{m_i}.
$$
As before, using the fact that $(c_i)_i$ is unbounded we conclude that $\B$ is not unconditional for constant coefficients. 
\end{proof}

\begin{remark}\label{remarkunconditional}\rm A slight modification of Example~\ref{examplendemocratic} shows that even for unconditional Schauder bases, $\n$-superdemocracy does not entail democracy, or even conservativeness. Indeed, if we replace the norm in Example~\ref{examplendemocratic} by the norm 
$$\left\Vert\sum_{j}a_j\be_j\right\Vert_{\diamond} = \max\left\lbrace \Vert (a_j)_j\Vert_2, \sup_{i\in\mathbb N}\frac{c_i}{\sqrt{m_i}} \sum_{j=\tilde{m}_i+1}^{\tilde{m}_i+m_i}\left|a_j\right|\right\rbrace,$$
the resulting basis is $1$-unconditional, and the proof of $\n$-superdemocracy  holds with only minor, strightforward modifications. Since  $(c_i)_i$ is unbounded, 
$$
\|\bff_{B_i}\|_{\diamond}\ge \|\bff_{B_i}\|\ge \frac{c_i\sqrt{m_i}}{3}\ge \frac{c_i}{3}\sqrt{|B_i|},
$$
and the subsequence $(\be_{\widetilde{m}_i+1})_{i\in\N}$ is clearly equivalent to the unit vector basis of $\ell_2$, $\B$ is not conservative. 
\end{remark}

\color{black}

\section*{Annex: Summary of some important constants}
\begin{table}[ht]
	\begin{center}
	\begin{tabular}{c c c}\hline
		
			& & \\
			{\bf Symbol}& {\bf Name of constant} &  {\bf Ref. equation}\\ \hline
			& & 
			\\
					$\C_{q}$ & Quasi-greedy constant & \eqref{q} \\ 
						& & \\
					$\mathbf{K}_u $ & Unconditionality for constant coeff. constant & \eqref{ucc} \\ 
			& & \\
			$\Delta_d$         & Democracy constant & \eqref{demo} \\
			& & \\
			$\Delta_s$ & Superdemocracy constant & \eqref{demo} \\ 
			& & \\
			$\Delta$ & Symmetry for largest coeff. constant &  \eqref{sy} \\ 
				\end{tabular}
	\end{center}

\end{table}

\end{document}